\documentclass[a4paper,11pt]{article}

\usepackage[T1]{fontenc}
\usepackage[utf8]{inputenc}
\usepackage[english]{babel}
\usepackage{color}
\usepackage{amsmath,amssymb,amsthm}
\usepackage{ stmaryrd }
\usepackage{latexsym}
\usepackage{graphicx,epsfig}
\usepackage{float,enumerate,array}
\usepackage[left=25mm, right=25mm,
            top=30mm, bottom=30mm]{geometry}
\newtheorem{theo}{Theorem}
\newtheorem{prop}[theo]{Proposition}
\newtheorem{lem}[theo]{Lemma}

\newtheorem*{remark*}{Remark}

\newtheorem*{claim*}{Claim}


\newcommand{\set}[1]{\left\{#1\right\}}

\newcommand{\eps}{\varepsilon}

\renewcommand{\P}{\mathbb{P}}

\newcommand{\Poi}{\mathrm{Poisson}}
\newcommand{\Bin}{\mathrm{Binomial}}

\usepackage{hyperref,pifont}

\begin{document}

\title{The Ulam-Hammersley problem for multiset permutations}
\author{\textsc{Lucas Gerin}}

\maketitle

\begin{abstract}
We obtain the asymptotic behaviour of the longest increasing/non-decreasing subsequences in a random uniform multiset permutation in which each element in $\{1,\dots,n\}$ occurs $k$ times, where $k$ may depend on $n$. This generalizes the famous Ulam-Hammersley problem of the case $k=1$. The proof relies on poissonization and on a careful non-asymptotic analysis of variants of the Hammersley-Aldous-Diaconis particle system.\\
{\bf Keywords:} Combinatorial probability, longest increasing subsequence, interacting particle systems, Hammersley process.
\end{abstract}

\section{Introduction}

A $k$-multiset permutation of size $n$ is a  word with letters in $\{1,2,\dots ,n\}$ such that each letter appears exactly $k$ times.
When this is convenient we identify a multiset permutation  $s=\left(s(1),\dots,s(kn)\right)$ and the set of points $\{(i,s(i)),\ 1\leq i\leq kn\}$. 
We introduce two partial orders over the quarter-plane $[0,\infty)^2$:
\begin{align*}
(x,y)\prec (x',y') &\text{ if } x<x'\text{ and }y<y',\\
(x,y)\preccurlyeq (x',y') &\text{ if } x<x'\text{ and }y\leq y'. 
\end{align*}
For a finite set $\mathcal{P}$ of points in the quarter-plane  we put
\begin{align*}
\mathcal{L}_{<}(\mathcal{P})&=\max\set{L; \hbox{ there exists }P_1  \prec P_2 \prec \dots \prec P_L, \text{ where each }P_i \in \mathcal{P} },\\
\mathcal{L}_{\leq}(\mathcal{P})&=\max\set{L; \hbox{ there exists }P_1\preccurlyeq P_2 \preccurlyeq\dots \preccurlyeq P_L, \text{ where each }P_i \in \mathcal{P} }.
\end{align*}
In words the integer $\mathcal{L}_{<}(\mathcal{P})$ (resp. $\mathcal{L}_{\leq}(\mathcal{P})$) is the length of the longest increasing (resp.  non-decreasing) subsequence of $\mathcal{P}$.

\begin{figure}
\begin{center}
\includegraphics[width=12cm]{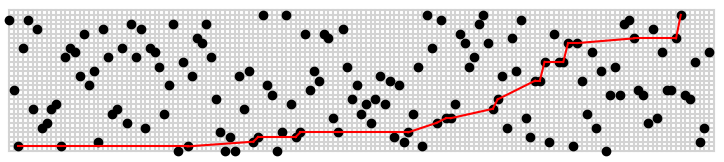}
\end{center}
\caption{A uniform $5$-multiset permutation $S_{5;30}$ of size $n=30$ and one of its longest non-decreasing subsequences.}
\end{figure}

Let $S_{k;n}$ be a $k$-multiset permutation of size $n$ drawn uniformly among the $\frac{(kn)!}{k!^n}$ possibilities. 
In the case $k=1$ the word $S_{1;n}$ is simply a uniform permutation and estimating $\mathcal{L}_{<}(S_{1;n})=\mathcal{L}_{\leq}(S_{1;n})$ is known as the Hammersley or Ulam-Hammersley problem. The first order was solved by Ver{\v{s}}ik and Kerov \cite{VersikKerov} and simultaneously by Logan and Shepp:
$$
\mathbb{E}[\mathcal{L}_{<}(S_{1;n})]  \stackrel{n\to +\infty}\sim 2\sqrt{n}.
$$
Note that the above limit  also holds in probability\footnote{We use the shortcut $\mathrm{o}_{\mathbb{P}}(a_n)$ for a sequence of random variables $(X_n)$ such that $X_n/a_n \to 0$ in probability.}: $
\mathcal{L}_{<}(S_{1;n})=  2\sqrt{n}+\mathrm{o}_{\mathbb{P}}(\sqrt{n})$. 
This problem has a long history and has revealed deep and unexpected connections between combinatorics, interacting particle systems, calculus of variations, random matrix theory, representation theory. We refer to Romik \cite{Romik} for a very nice description of this problem and some of its ramifications.

In the context of card guessing games it is asked in \cite[Question 4.3]{ContinuouslyIncreasing} the behaviour of $\mathcal{L}_{<}(S_{k;n})$ for a fixed $k$. 
Using the  Ver{\v{s}}ik-Kerov Theorem we can make an educated guess. The intuition is that, for fixed $k$, it is quite unlikely that many points at the same height contribute to the same longest increasing/non-decreasing subsequence. Thus at the first order everything should happen as if the $kn$ points had distinct heights and we expect that
$$
\mathcal{L}_{<}(S_{k;n})\approx \mathcal{L}_{\leq}(S_{k;n}) \approx  \mathcal{L}_{<}(S_{1;kn}) \approx 2\sqrt{kn}.
$$
The original motivation of the present paper was to make this approximation rigorous. We actually adress this question in the case where $k$ depends on $n$.

\begin{theo}[Longest increasing subsequences]\label{Th:Strict}
Let $(k_n)$ be a sequence of integers such that $k_n\leq n$ for all $n$.
Then\footnote{If $k_n\geq n$ for some $n$ then the following greedy strategy shows that $\mathbb{E}[\mathcal{L}_{<}(S_{k_n;n})]= n-\mathrm{o}(n)$ so the picture is complete. \\
Indeed, first choose the leftmost point $(x_1,1)$ in $S_{k_n;n}$ with height $1$. Then recursively define $(x_\ell,\ell)$ at the leftmost point (if any) in $S_{k_n;n}$ with height $\ell$ such that $x_\ell >x_{\ell-1}$, and so on until you are stuck (either because $\ell=n$ or because there is no point in $S_{k_n;n}\cap (x_{\ell-1},kn]\times\set{\ell}$).
A few elementary computations show that this strategy defines an increasing path of length $n-\mathrm{o}(n)$ with probability tending to one. As $\mathcal{L}_{<}(S_{k_n;n}) \leq n$ a.s. this yields $\mathbb{E}[\mathcal{L}_{<}(S_{k_n;n})]= n-\mathrm{o}(n)$.
}
\begin{equation}\label{eq:Theta<}
\mathbb{E}[\mathcal{L}_{<}(S_{k_n;n})]=2\sqrt{nk_n}-k_n+o(\sqrt{nk_n}).
\end{equation}
\end{theo}
(Of course if $k_n=o(n)$ then the RHS of  \eqref{eq:Theta<} reduces to  $2\sqrt{nk_n}+\mathrm{o}(\sqrt{nk_n})$.)

\begin{theo}[Longest non-decreasing subsequences]\label{Th:Large}
Let $(k_n)$ be an arbitrary sequence of integers.
Then
\begin{equation}\label{eq:Theta_leq}
\mathbb{E}[\mathcal{L}_{\leq}(S_{k_n;n})]=2\sqrt{nk_n}+k_n + o(\sqrt{nk_n}).
\end{equation}
\end{theo}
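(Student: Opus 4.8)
\emph{Strategy.} The plan is to follow the same route as for Theorem~\ref{Th:Strict}: poissonize the problem, analyse a Hammersley--Aldous--Diaconis (HAD)-type particle system, and de-poissonize. The one new ingredient is that here the relevant particle system is the one adapted to the \emph{weak} order $\preccurlyeq$: contrary to the system governing $\mathcal{L}_{<}$ it is a process with an unboundedly growing number of particles, and it is precisely the boundary behaviour of this system that replaces the $-k_n$ of \eqref{eq:Theta<} by a $+k_n$ in \eqref{eq:Theta_leq}.

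\emph{Easy bounds and poissonization.} First I would record two cheap lower bounds: $\mathcal{L}_{\leq}(S_{k_n;n})\geq k_n$, since the $k_n$ copies of the letter $1$ occupy increasing positions at equal height and hence form a $\preccurlyeq$-chain, and $\mathcal{L}_{\leq}(S_{k_n;n})\geq\mathcal{L}_{<}(S_{k_n;n})$. With Theorem~\ref{Th:Strict} these already yield \eqref{eq:Theta_leq} when $k_n=\mathrm{o}(\sqrt n)$, but not in general (for $k_n\asymp n$ the two theorems disagree at order $n$); the $\pm k_n$ corrections are the real content whenever $k_n\gtrsim n$. Now fix $T>0$ and let $\mathcal{M}_n(T)$ be the point process obtained by placing, independently for each height $j\in\{1,\dots,n\}$, a rate-$1$ Poisson process of positions on $[0,T]$. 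Conditioning on each height carrying exactly $k$ points and relabelling positions by rank produces $S_{k;n}$. The key feature — this is where discreteness matters — is that the points of $\mathcal{M}_n(T)$ have pairwise distinct \emph{positions} but live on only $n$ distinct \emph{heights}, so that $\mathcal{L}_{\leq}(\mathcal{M}_n(T))$ is strictly larger than $\mathcal{L}_{<}(\mathcal{M}_n(T))$: a $\preccurlyeq$-chain may collect many points of one and the same height.

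\emph{The weak HAD process.} Sweeping the position coordinate as ``time'', maintain a configuration of particles on the height-line $\{1,\dots,n\}$, now allowing several particles per site: upon meeting a point of $\mathcal{M}_n(T)$ at height $j$, push down to $j$ the lowest particle lying \emph{strictly} above $j$, and, if there is none, create a new particle at $j$. A standard patience-sorting/pile argument identifies the number of particles present at time $T$ with $\mathcal{L}_{\leq}(\mathcal{M}_n(T))$; this is exactly the ``$>$'' counterpart of the ``$\geq$'' rule that yields $\mathcal{L}_{<}$ (in which a site never carries two particles, whence $\mathcal{L}_{<}\leq n$). It then remains to prove that this number of particles is $2\sqrt{nT}+T+\mathrm{o}(\sqrt{nT})$. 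The leading $2\sqrt{nT}$ should come, as for Theorem~\ref{Th:Strict}, from a stationary/hydrodynamic analysis — I expect the invariant law of the per-site occupation numbers to be of product form (i.i.d.\ geometric, in place of the Poisson stationarity of the classical continuous HAD), and a Burke-type computation to produce the Hammersley constant $2$. The extra $+T$ comes from the creation events along the moving top of the configuration: the number of particles equals the number of creations, and a point that falls at or above the current topmost particle always creates a new particle in the weak process, whereas in the strict process such a point is absorbed — a $\Theta(T)$ discrepancy. I expect this last point — controlling precisely the growing-number-of-particles process near its upper boundary, presumably via a comparison with a totally asymmetric exclusion or zero-range system — to be the main obstacle of the proof.

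\emph{De-poissonization.} Finally, taking $T=k_n$ and transferring $\mathbb{E}[\mathcal{L}_{\leq}(\mathcal{M}_n(T))]=2\sqrt{nT}+T+\mathrm{o}(\sqrt{nT})$ back to $S_{k_n;n}$: using that $\mathcal{L}_{\leq}$ is monotone under adding points, that the per-height counts are $\mathrm{Poisson}(T)$ and concentrate (fluctuations of order $\sqrt{k_n\log n}$), and that on the overwhelmingly likely event ``all counts $\geq k_n$'' (resp.\ ``all counts $\leq k_n$'') one may delete (resp.\ add) points to obtain a genuine copy of $S_{k_n;n}$, one sandwiches $\mathbb{E}[\mathcal{L}_{\leq}(S_{k_n;n})]$ between $\mathbb{E}[\mathcal{L}_{\leq}(\mathcal{M}_n(k_n-\Theta(\sqrt{k_n\log n})))]$ and $\mathbb{E}[\mathcal{L}_{\leq}(\mathcal{M}_n(k_n+\Theta(\sqrt{k_n\log n})))]$ and concludes. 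This reduction is clean when $k_n\gg\log n$; the complementary slowly-growing regime $k_n=\mathrm{O}(\log n)$ needs a separate, cruder treatment — there the target is merely $2\sqrt{nk_n}(1+\mathrm{o}(1))$, so it suffices to combine the lower bound $\mathcal{L}_{\leq}\geq\mathcal{L}_{<}$ with an upper bound on the ``horizontal excess'' $\mathcal{L}_{\leq}(S_{k_n;n})-\mathcal{L}_{<}(S_{k_n;n})$, which one can extract for instance from the height-refinement identity $\mathcal{L}_{\leq}(S_{k_n;n})=\mathcal{L}_{<}(\widetilde S)$, with $\widetilde S$ a uniform permutation of $\{1,\dots,nk_n\}$ conditioned to be increasing within each of the $n$ consecutive blocks of $k_n$ labels.
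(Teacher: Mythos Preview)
Your outline is essentially the paper's own strategy: poissonize, analyse a Hammersley-type process whose stationary version has Poisson sources and \emph{geometric} sinks (this is exactly Lemma~\ref{lem:Stationnaire_leq}), prove non-asymptotic concentration (Theorem~\ref{Th:BorneInf_L}), and de-poissonize by sandwiching between two Poisson intensities. So the architecture is right, and your intuition that the $+k_n$ is what distinguishes the weak process from the strict one is correct.

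There is, however, a genuine gap in your coverage of the intermediate regime. Your de-poissonization, as you yourself note, is clean only when $k_n\gg\log n$: the shift $T=k_n\pm\Theta(\sqrt{k_n\log n})$ must be $o(k_n)$ for $2\sqrt{nT}$ to agree with $2\sqrt{nk_n}$ up to $o(\sqrt{nk_n})$. On the other side, the ``horizontal excess'' argument you sketch for $k_n=O(\log n)$ does not actually reach $\log n$. A union-bound over possible tie patterns (which is what the paper does in Section~\ref{Sec:Small}) only gives $\mathcal{L}_{\leq}-\mathcal{L}_{<}=o_{\mathbb P}(\sqrt{nk_n})$ under a condition of the type $k_n^2(k_n)!=o(\sqrt n)$, i.e.\ roughly $k_n\lesssim \log n/\log\log n$. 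Your alternative suggestion, the height-refinement identity $\mathcal{L}_{\leq}(S_{k_n;n})=\mathcal{L}_{<}(\widetilde S)$, is correct as an identity but does not yield an upper bound: $\widetilde S$ is a \emph{non-uniform} permutation (uniform conditioned on being increasing in each block of values), and there is no monotone comparison between $\mathcal{L}_{<}(\widetilde S)$ and $\mathcal{L}_{<}$ of a uniform permutation of the same size. So an interval around $k_n\asymp\log n$ is left uncovered.

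The paper closes this gap with a short but non-obvious trick (Lemma~\ref{lem:Valentin}, due to F\'eray): merge $A$ consecutive heights to send $S_{k_n;n}$ into a multiset permutation $\widetilde S_{k_nA;\lfloor n/A\rfloor}$, with $\mathcal{L}_{\leq}(S_{k_n;n})\leq \mathcal{L}_{\leq}(\widetilde S_{k_nA;\lfloor n/A\rfloor})+k_nA$. Taking $A\approx\log n$ pushes the multiplicity into the ``large'' regime where de-poissonization applies, while the additive error $k_nA\approx k_n\log n=o(\sqrt{nk_n})$ is harmless. You should incorporate this reduction (or find an alternative) to make the argument complete.
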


\paragraph{Strategy of proof and organization of the paper.} 
In Section \ref{Sec:Small} we first provide the proof of Theorems \ref{Th:Strict} and \ref{Th:Large} in the case of a constant or slowly growing sequence $(k_n)$. The proof is elementary (assuming known the Ver{\v{s}}ik-Kerov Theorem).

For the general case we first borrow a few tools in the literature. In particular we introduce and analyze poissonized versions of  $\mathcal{L}_{<}(S_{k_n;n}),\mathcal{L}_{\leq}(S_{k_n;n})$. As already suggested by Hammersley (\cite{Hamm}, Sec.9) and achieved by Aldous-Diaconis \cite{AldousDiaconis} the case $k=1$ can be tackled by considering an interacting particle system which is now known  as the Hammersley or Hammersley-Aldous-Diaconis (HAD) process. 

In Section \ref{Sec:Hammersley} we introduce and analyze the two variants of the Hammersley process adapted to multiset permutations. The first one is the discrete-time HAD process \cite{Ferrari,FerrariMartin}, the second one had recently appeared in \cite{TheseBoyer} with a connection to the O'Connell-Yor Brownian polymer. The standard path to analyze Hammersley-like processes consists in using subadditivity to prove the existence of a limiting shape and then proving that this limiting shape satisfies a variational problem. Typically this variational problem is solved either using convex duality  \cite{Sepp,CiechGeorgiou} or through the analysis of \emph{second class particles} \cite{CatorGroeneboom2,CiechGeorgiou}. 
The issue here is that since we allow $k_n$ to have different scales we cannot use this approach and we need to derive non-asymptotic bounds for both processes. This is the purpose of Theorem \ref{Th:BorneInf_L} whose proof  is the most technical part of the paper. 
In Section \ref{Sec:Dep} we detail the multivariate de-poissonization procedure in order to conclude the proof of Theorem \ref{Th:Strict}. De-poissonization is more convoluted for non-decreasing subsequences: see  Section \ref{Sec:Dep2}.

\paragraph{Beyond expectation.} 
In the course of the proof we actually obtain results beyond the estimation of the expectation. We obtain concentration inequalities for the poissonized version of $\mathcal{L}_{<}(S_{k_n;n}),\mathcal{L}_{\leq}(S_{k_n;n})$: see Theorem \ref{Th:BorneInf_L} and also the discussion in Section \ref{Sec:Deviations}.
We also obtain the convergence in probability, unfortunately for some technical reasons we miss a small range of scales of $(k_n)$'s.
\begin{prop}\label{prop:cvProba}
Let $(k_n)$ be either a \emph{small} or a \emph{large} sequence.  Then
$$
\frac{\mathcal{L}_{<}(S_{k_n;n})}{2\sqrt{nk_n}-k_n}\stackrel{\text{prob.}}\to 1,\qquad 
\frac{\mathcal{L}_{\leq}(S_{k_n;n})}{2\sqrt{nk_n}+k_n}\stackrel{\text{prob.}}\to 1.
$$
\end{prop}
We refer to \eqref{eq:condition_small},\eqref{eq:condition_large} below for the formal definitions of small/large sequences. Let us just say that sequences such that  $k_n=\mathcal{O}((\log n)^{1-\eps})$ for some $\eps>0$ are small while sequences such that $(\log n)^{1+\eps}=\mathcal{O}(k_n)$ are large. 
Sequences  in-between are neither small nor large so in Proposition \ref{prop:cvProba} we miss scales like $k_n\approx \log(n)$.

Regarding fluctuations a famous result by Baik, Deift and Johansson \cite[Th.1.1]{BDK} states that
$$
\frac{\mathcal{L}_{\leq}(S_{1;n})-2\sqrt{n}}{n^{1/6}}\stackrel{(d)}{\to} \mathrm{TW}
$$
where $\mathrm{TW}$ is the Tracy-Widom distribution. The intuition given by the comparison with the Hammersley process would suggest that the fluctuations of $\mathcal{L}_{<}(S_{k_n;n})$, $\mathcal{L}_{\leq}(S_{k_n;n})$ might be of order $(k_n n)^{1/6}$ as long as $(k_n)$ does not grow too fast. 
A natural question to explore for furthering this work would involve understanding for which $(k_n)$ the model preserves KPZ scaling exponents. The non-asymptotic estimates of Section \ref{Sec:Hammersley} could serve as a first step in this direction.

\paragraph{Comparison with previous works.} There are only few random sets $\mathcal{P}$ for which the asymptotics of $\mathcal{L}_{<}(\mathcal{P}),\mathcal{L}_{\leq}(\mathcal{P})$ are known: 
\begin{enumerate}
\item[-] As already mentioned, the case of a uniform permutation (and its poissonized version) is very well understood, via different approaches. 
For proofs close to the spirit of the present paper, we refer to \cite{AldousDiaconis} and \cite{CatorGroeneboom}.
\item[-] The case where $\mathcal{P}$ is given by a field of i.i.d. Bernoulli random variables on the square grid has been solved by Sepp\"al\"ainen in \cite{Sepp} for  $\mathcal{L}_{<}$ and in \cite{Sepp2}  for  $\mathcal{L}_{\leq}$. (See \cite{NousAlea} also for an elementary proof of both results).
\end{enumerate}
We are not aware of previous results for multiset permutations.
However Theorems \ref{Th:Strict} and \ref{Th:Large} in the linear regime $k_n\sim \mathrm{constant}\times n$ should be compared to a result by Biane (\cite[Theorem 3]{Biane}). \\
We need a few notations to describe his result.
Let $\mathcal{W}_{q_N;N}$ be the random word given by of $q_N$  i.i.d. uniform letters in $\{1,2,\dots, N\}$.  
The word $\mathcal{W}_{q_N;N}$ is not a multiset permutation but since for large $N$ there are in average $q_N/N$ points on each horizontal line of $\mathcal{W}_{q_N;N}$ we expect that $\mathcal{L}_<(\mathcal{W}_{q_N;N}) \approx \mathcal{L}_<(S_{q_N/N;N})$ and $\mathcal{L}_\leq(\mathcal{W}_{q_N;N})\approx \mathcal{L}_\leq(S_{q_N/N;N}) $.

Biane obtains the exact limiting shape of the random Young Tableau induced through the RSK correspondence by $\mathcal{W}_{q_N;N}$  in the regime where $\sqrt{q_N}/N \to c$ for some constant $c>0$. 
As the length of the first row (resp. the number of rows) in the  Young Tableau corresponds to the length of the longest non-decreasing subsequence in $\mathcal{W}_{k;n}$ (resp. the length of the longest decreasing sequence) a consequence of (\cite[Theorem 3]{Biane}) is that, in probability,
$$
\liminf \frac{1}{\sqrt{q_N}} \mathcal{L}_<(\mathcal{W}_{q_N;N}) \geq (2-c),\qquad
\limsup \frac{1}{\sqrt{q_N}} \mathcal{L}_\leq(\mathcal{W}_{q_N;N}) \leq (2+c).
$$

For that regime our Theorems \ref{Th:Strict} and \ref{Th:Large} respectively suggest:
\begin{align*}
\mathcal{L}_<(\mathcal{W}_{q_N;N}) \approx \mathcal{L}_<(S_{q_N/N;N}) \approx \mathcal{L}_<(S_{c^2 N;N}) \sim  2Nc-c^2N\sim (2-c)\sqrt{q_N},\\ 
\mathcal{L}_\leq(\mathcal{W}_{q_N;N})\approx \mathcal{L}_\leq(S_{q_N/N;N})  \approx \mathcal{L}_\leq(S_{c^2 N;N}) \sim 2Nc+c^2N\sim (2+c)\sqrt{q_N},
\end{align*}
which is indeed consistent with Biane's result.
\section{Preliminaries: the case of small $k_n$}\label{Sec:Small}

We first prove Theorems \ref{Th:Strict} and \ref{Th:Large} in the case of a \emph{small} sequence $(k_n)$.
We say that a sequence $(k_n)$ of integers is \emph{small} if 
\begin{equation}\label{eq:condition_small}
k_n^2(k_n)!=\mathrm{o}(\sqrt{n}).
\end{equation}
Note that a sequence of the form $k_n=(\log n)^{1-\eps}$ is small while $k_n=\log n$ is not small.

\begin{proof}[Proof of Theorems \ref{Th:Strict} and \ref{Th:Large} in the case of a small sequence $(k_n)$]
\noindent (In order to lighten notation we skip the dependence in $n$ and write $k=k_n$.)\\
Let $\sigma_{kn}$ be a random uniform permutation of size $kn$. We can associate to $\sigma_{kn}$ a $k$-multiset permutation $S_{k;n}$ in the following way. For every $1\leq i\leq kn$ we put
$$
S_{k;n}(i)=\lceil \sigma(i)/k \rceil.
$$
It is clear that $S_{k;n}$ is uniform and we have
\begin{equation}\label{eq:EncadrementVersik}
\mathcal{L}_{<}(S_{k;n}) \leq \mathcal{L}_{\leq}(\sigma_{kn})\leq \mathcal{L}_{\leq}(S_{k;n}).
\end{equation}
The Ver{\v{s}}ik-Kerov Theorem says that the middle term in the above inequality grows like $2\sqrt{kn}$. Hence we need to show that if $(k_n)$ is small then
$$
 \mathcal{L}_{\leq}(S_{k;n})= \mathcal{L}_{<}(S_{k;n}) +o_\mathbb{P}(\sqrt{kn}),
$$
which proves the small case of Proposition \ref{prop:cvProba} and Theorems \ref{Th:Strict} and \ref{Th:Large}.
For this purpose we introduce for every $\delta > 0$ the event
$$
\mathcal{E}_\delta := \left\{\mathcal{L}_{\leq}(S_{k;n})\geq \mathcal{L}_{<}(S_{k;n}) +\delta \sqrt{n}\right\}.
$$
If $\mathcal{E}_\delta$ occurs then in particular there exists a non-decreasing subsequence with $\delta \sqrt{n}$ ties, \emph{i.e.} points of $S_{k;n}$  which are at the same height as their predecessor in the subsequence. These ties have distinct heights $1\leq i_1<\dots < i_\ell \leq n$ for some $\delta \sqrt{n}/k \leq \ell \leq \delta \sqrt{n}$.
Fix
\begin{itemize}
\item Integers $m_1,\dots ,m_\ell \geq 2$ such that $(m_1-1)+\dots +(m_\ell -1) = \delta \sqrt{n}$ ;
\item Column indices $r_{1,1}<\dots <r_{1,m_1}<r_{2,1}< r_{2,m_1}< \dots< r_{\ell,1} < \dots <r_{1,m_\ell}$.
\end{itemize}
We then introduce the event
\begin{multline*}
F=F\left((i_\ell)_\ell, (r_{i,j})_{i\leq \ell,j\leq m_i}\right) \\= \left\{ S(r_{1,1})=\dots = S(r_{1,m_1})=i_1,S(r_{2,1})=\dots = S(r_{2,m_1})=i_2
,\dots,S(r_{\ell,1})=\dots = S(r_{1,m_\ell})=i_\ell\right\}.
\end{multline*}
By the union bound (we skip the integer parts)
$$
\mathbb{P}(\mathcal{E}_\delta )\leq  \sum_{\delta\sqrt{n}/k \leq \ell \leq  \delta\sqrt{n}}\ \ 
\sum_{1\leq i_1<\dots \leq i_\ell \leq n}\ \ 
\sum_{ (r_{i,j})_{i\leq \ell,j\leq m_i}} \mathbb{P}\left(F\left((i_\ell)_\ell, (r_{i,j})_{i\leq \ell,j\leq m_i}\right)\right).
$$
Using that
$$
\mathrm{card}\left\{\sum m_i=\delta\sqrt{n}+\ell; \text{ each }m_i\geq 2\right\}=\mathrm{card}\left\{\sum p_i=\delta\sqrt{n}; \text{ each }p_i\geq 1\right\}=\binom{\delta\sqrt{n}-1}{\ell-1}
$$
we obtain
\begin{align*}
\sum_{(r_{i,j})_{i\leq \ell,j\leq m_i}}\mathbb{P}(F)
&=\frac{1}{\binom{kn}{k\ k\ \dots\ k}}
\underbrace{\binom{nk}{\sum m_i}}_{\text{choices of $r$'s}}\ 
\underbrace{\binom{\delta\sqrt{n}-1}{\ell-1}}_{\text{choices of }m_i's}\ 
\underbrace{\binom{kn-\sum m_i}{(k-m_1)\ (k-m_2)\ \dots (k-m_\ell)  k \dots k}}_{\text{choices of $kn-\sum m_i$ remaining points}}
\\
&=
 \frac{(k!)^\ell(\delta\sqrt{n}-1)!}{(\delta\sqrt{n}+\ell)!(\delta\sqrt{n}-\ell)!(\ell-1)!(k-m_1)!(k-m_2)!\times \dots \times (k-m_\ell)!}.
\end{align*}
Bounding each factor $(k-m_i)!$ by $1$ we get
$$
\sum_{(r_{i,j})_{i\leq \ell,j\leq m_i}}\mathbb{P}(F)\leq\frac{(k!)^\ell}{(\delta\sqrt{n})^{\ell +1}(\delta\sqrt{n}-\ell)!(\ell-1)!}.
$$

\begin{figure}
\begin{center}
\includegraphics[width=12cm]{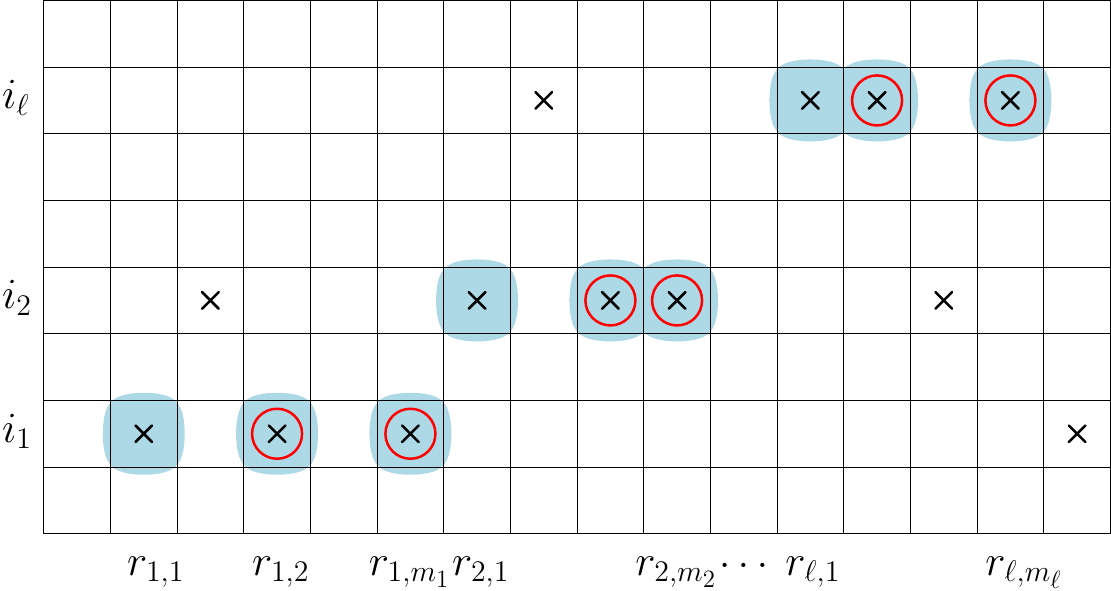}
\end{center}
\caption{The event $F$. (Ties are surrounded in red. Points with blue background represent the subsequence with $\delta\sqrt{n}$ ties.)}
\end{figure}

We now sum over $1\leq i_1<\dots \leq i_\ell \leq n$ and then sum over $\ell$:
\begin{align}
\mathbb{P}(\mathcal{E}_\delta)
&\leq   \sum_{\ell= \delta\sqrt{n}/k}^{\delta\sqrt{n}} \binom{n}{\ell}   \frac{(k!)^\ell}{(\delta\sqrt{n})^{\ell+1} (\delta\sqrt{n}-\ell)!(\ell-1)!} \notag\\
&\leq   \sum_{\ell= \delta\sqrt{n}/k}^{\delta\sqrt{n}-3} \binom{n}{\ell}   \frac{(k!)^\ell}{(\delta\sqrt{n})^{\ell+1} (\delta\sqrt{n}-\ell)!(\ell-1)!} 
+ 3   \binom{n}{\delta\sqrt{n}} \frac{(k!)^{\delta\sqrt{n}}}{(\delta\sqrt{n})^{\delta\sqrt{n}-2} (\delta\sqrt{n}-3)!} \label{eq:SommeCoupeeEnDeux}
\end{align}
Using the two following  inequalities valid for every $j\leq m$ (see \emph{e.g.} \cite[eq.(C.5)]{Cormen})
$$
 \binom{m}{j} \leq \left( \frac{me}{j}\right)^{j},\qquad m! \geq m^m\exp(-m)
$$
we first obtain that if $k_n!=\mathrm{o}(\sqrt{n})$ (which is the case if $(k_n)$ is small) then the last term of \eqref{eq:SommeCoupeeEnDeux} tends to zero. Regarding the sum we write
\begin{align}
\mathbb{P}(\mathcal{E}_\delta)&\leq \sum_{\ell= \delta\sqrt{n}/k}^{\delta\sqrt{n}-3}
\left(\frac{ne}{\ell}\right)^\ell   \frac{(k!)^\ell}{(\delta\sqrt{n})^{\ell+1}(\delta\sqrt{n}-\ell)^{\delta\sqrt{n}-\ell}e^{-\delta\sqrt{n}+\ell} (\ell-1)^{\ell-1}e^{-\ell+1}} + \mathrm{o}(1)\notag\\
&\leq \sum_{\ell= \delta\sqrt{n}/k}^{\delta\sqrt{n}-3}
\left(\frac{nek!(\delta\sqrt{n}-\ell)}{\delta\sqrt{n}\ell(\ell-1)}\right)^\ell \underbrace{\frac{(\ell-1)e^{-1}}{\delta\sqrt{n}}}_{\leq 1}   \bigg(\underbrace{\frac{e}{\delta\sqrt{n}-\ell}}_{\leq e/3<1}\bigg)^{\delta\sqrt{n}}+ \mathrm{o}(1)\notag\\
&\leq \sum_{\ell= \delta\sqrt{n}/k}^{\delta\sqrt{n}-3}
\left(\frac{\sqrt{n}ek!(\delta\sqrt{n}-\ell)}{\delta\ell(\ell-1)}\right)^\ell    \left(\frac{e}{\delta\sqrt{n}-\ell}\right)^{\ell}+ \mathrm{o}(1)\notag\\
&\leq \sum_{\ell= \delta\sqrt{n}/k}^{\delta\sqrt{n}-3}
\left(\frac{\sqrt{n}e^2k!}{\delta\ell(\ell-1)}\right)^\ell + \mathrm{o}(1)
 \leq \sum_{\ell= \delta\sqrt{n}/k}^{\delta\sqrt{n}-3}
\left(\frac{e^2k^2 k!}{\delta^3 \sqrt{n}}\right)^\ell + \mathrm{o}(1)\label{eq:Bound_oP}
\end{align}
which tends to zero for every $\delta >0$, as long as $(k_n)$ satisfies \eqref{eq:condition_small}. This proves that $ \mathcal{L}_{\leq}(S_{k;n})= \mathcal{L}_{<}(S_{k;n}) +o_\mathbb{P}(\sqrt{kn})$. Combining this with \eqref{eq:EncadrementVersik}, this proves that
$$
\frac{\mathcal{L}_{<}(S_{k_n;n})}{2\sqrt{nk_n}}\stackrel{\text{prob.}}\to 1,\qquad 
\frac{\mathcal{L}_{\leq}(S_{k_n;n})}{2\sqrt{nk_n}}\stackrel{\text{prob.}}\to 1,
$$
which is the "small" case of Proposition \ref{prop:cvProba} since $k_n= \mathrm{o}(\sqrt{nk_n})$.

To conclude the proof of small cases of Theorems \ref{Th:Strict} and \ref{Th:Large} we observe that we have the crude bounds  $ \mathcal{L}_{<}(S_{k;n})\leq n$ and $ \mathcal{L}_{\leq}(S_{k;n})\leq nk_n$. This allows us to write 
$$
\mathbb{E}\left[\big|\mathcal{L}_{\leq}(S_{k;n})- \mathcal{L}_{<}(S_{k;n})\big|\right]
\leq \delta\sqrt{n} +  nk_n\times \mathbb{P}(\text{not }\mathcal{E}_\delta)
$$
Together with  eq.\eqref{eq:Bound_oP} this implies that 
$$
\mathbb{E}[\mathcal{L}_{\leq}(S_{k;n})]= \mathbb{E}[\mathcal{L}_{<}(S_{k;n})] +o(\sqrt{nk_n}).
$$
We use again Ver{\v{s}}ik-Kerov and \eqref{eq:EncadrementVersik} to deduce that both sides are $2\sqrt{nk_n}+\mathrm{o}(\sqrt{nk_n})$.
\end{proof}

%



\section{Poissonization: variants of the Hammersley process}\label{Sec:Hammersley}

In this section we define formally and analyze two \emph{semi-discrete} variants of the Hammersley process.

\begin{remark*}
In the sequel, $\Poi(\mu)$ (resp. $\Bin(n,q)$) stand for generic random variables with Poisson distribution with mean $\mu$ (resp. Binomial distribution with parameters $n,q$).\\
Notation $\mathrm{Geometric}_{\geq 0}(1-\beta)$ stands for a geometric random variable with the convention $\mathbb{P}(\mathrm{Geometric}_{\geq 0}(1-\beta)=k)=(1-\beta)\beta^k$  for $k\geq 0$. In particular $\mathbb{E}[\mathrm{Geometric}_{\geq 0}(1-\beta)]=\frac{\beta}{1-\beta}$.
\end{remark*}

\subsection{Definitions of the processes  $L_<(t)$ and $L_\leq(t)$}

For a parameter $\lambda>0$ let $\Pi^{(\lambda)}$ be the random set $\Pi^{(\lambda)}=\cup_i \Pi_i^{(\lambda)}$ where $\Pi^{(\lambda)}_i$'s are independent  and each $\Pi^{(\lambda)}_i$ is a homogeneous Poisson Point Process (PPP) with intensity $\lambda$  on $(0,\infty)\times\{i\}$. 
For simplicity set
$$
\Pi^{(\lambda)}_{x,t}=\Pi^{(\lambda)} \cap \left( [0,x]\times\{1,\dots ,t\}\right).
$$
The goal of the present section is to obtain non-asymptotic bounds for $\mathcal{L}_<\left(\Pi^{(\lambda)}_{x,t}\right)$ and $\mathcal{L}_\leq \left( \Pi^{(\lambda)}_{x,t}\right)$. 
Indeed if we then choose
$$
\lambda_n \approx \frac{1}{n},\qquad x= nk_n,\qquad t=n
$$
then there are  $nk_n+\mathcal{O}(\sqrt{k_n})$ points on each line of a $\Pi^{(\lambda)}_{x,t}$ and we expect that
$$
\mathcal{L}_<\left( \Pi^{(\lambda_n)}_{kn,n}\right)
\approx \mathcal{L}_<(S_{k;n}),\qquad 
\mathcal{L}_\leq\left( \Pi^{(\lambda_n)}_{kn,n}\right)
\approx \mathcal{L}_\leq(S_{k;n})
$$
Fix $x>1$ throughout the section. 
For every $t\in \{0,1,2,\dots \}$ the function $y\in[0,x] \mapsto \mathcal{L}_<(y,t)$ (resp.  $\mathcal{L}_\leq(y,t)$) is a non-decreasing integer-valued function whose all steps are equal to $+1$. Therefore this function is completely determined by the finite set
$$
L_<(t):=\set{y\leq x, \mathcal{L}_<(y,t)=\mathcal{L}_<(y^-,t)+1}.
$$
(Respectively:
$$
L_\leq(t):=\set{y\leq x, \mathcal{L}_\leq(y,t)=\mathcal{L}_\leq(y^-,t)+1}.)
$$
Sets $L_<(t)$ and $L_\leq(t)$ are finite subsets of $[0,x]$ whose elements are considered as particles.
It is easy to see that for fixed $x>0$ both processes $(L_<(t))_t$ and $(L_\leq(t))_t$  are Markov processes taking their values in the family of point processes of $[0,x]$.

\begin{figure}
\begin{center}
\includegraphics[width=78mm]{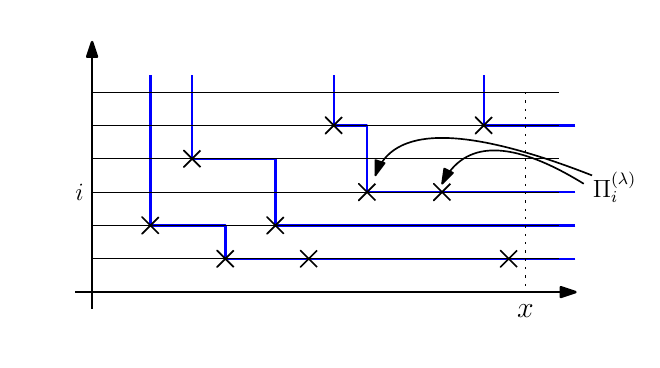} 
\includegraphics[width=78mm]{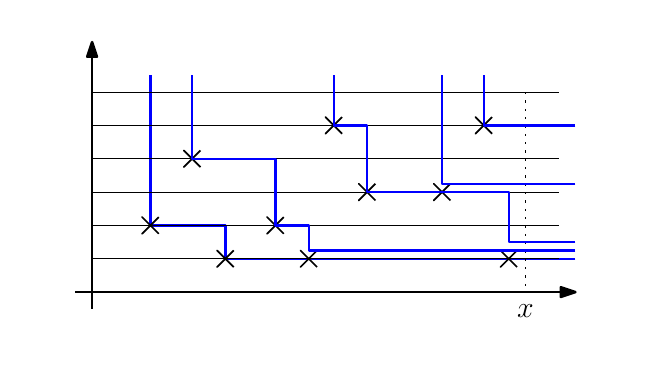} 
\includegraphics[width=78mm]{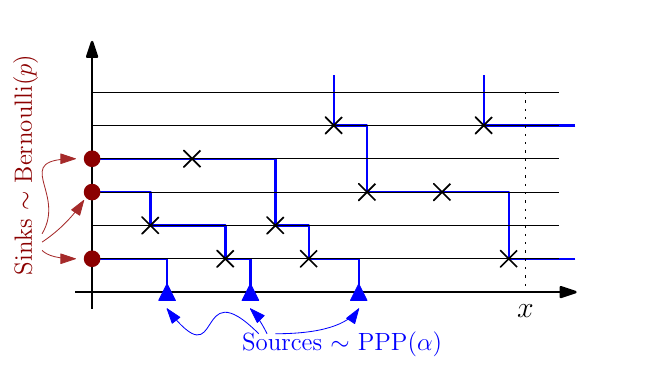} 
\includegraphics[width=78mm]{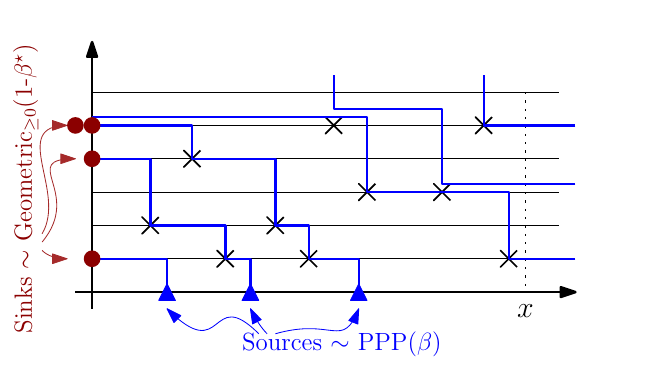} 
\end{center}
\caption{Our four variants of the Hammersley process (time goes from bottom to top, trajectories of particules are indicated in blue).
Top left: The process $L_<(t)$. Top right: The process $L_\leq(t)$. Bottom left: The process $L^{(\alpha,p)}_{< }(t)$. Bottom right: The process $L^{(\beta,\beta^\star)}_{\leq }(t)$. }
\label{fig:4schemasHammersley}
\end{figure}

Exactly the same way as for the classical Hammersley process (\cite[Sec.9]{Hamm}, \cite{AldousDiaconis}) the individual dynamic of particles is very easy to describe:
\begin{itemize}
\item {\bf The process $L_<$\ .} We put $L_<(0)=\emptyset$. In order to define $L_<(t+1)$ from $L_<(t)$ we consider particles from left to right. A particle at $y$ in $L_<(t)$ moves at time $t+1$ at the location of the leftmost available point $z$ in $\Pi_{t+1}^{(\lambda)}\cap(0,y)$ (if any, otherwise it stays at $y$). This point $z$ is not available anymore for subsequent particles, as well as every other point of $\Pi_{t+1}^{(\lambda)}\cap (0,y)$. \\
\begin{center}
\includegraphics[width=95mm]{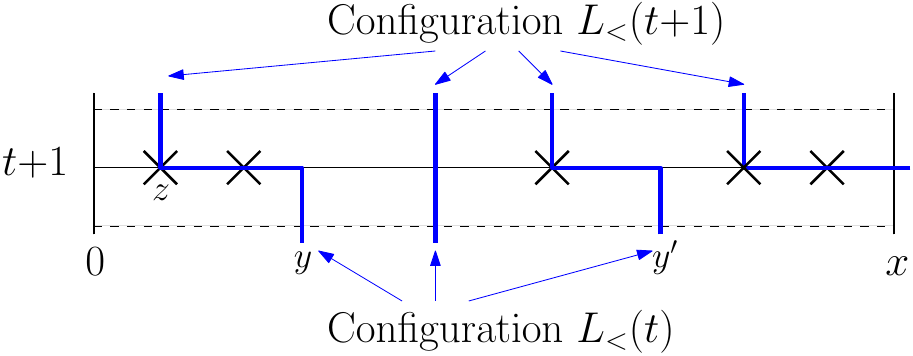} 
\end{center}
If there is a point in $\Pi_{t+1}^{(\lambda)}$ which is on the right of $y':=  \max \{ L_<(t)\}$ then a new particle is created in $L_<(t+1)$, located at the leftmost point in $\Pi_{t+1}^{(\lambda)}\cap (y',x)$. (In pictures this new particle comes from the right.)\\
A realization of $L_<$ is shown on top-left of Fig.\ref{fig:4schemasHammersley}.
\end{itemize}

\begin{itemize}
\item {\bf The process $L_\leq$\ .} We put $L_\leq(0)=\emptyset$. In order to define $L_\leq(t+1)$ from $L_\leq(t)$ we also consider particles from left to right. A particle at $y$ in $L_\leq(t)$ moves at time $t+1$ at the location of the leftmost available point $z$ in $\Pi_{t+1}^{(\lambda)}\cap(0,y)$. This point $z$ is not available anymore for subsequent particles, {\bf other points in $(z,y)$ remain available}.\\
\begin{center}
\includegraphics[width=95mm]{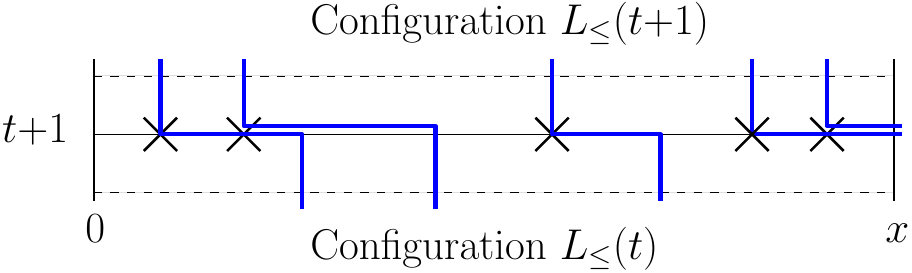} 
\end{center}
If there is a point in $\Pi_{t+1}^{(\lambda)}$ which is on the right of $y':=  \max\{ L_<(t)\}$ then new particles are created in $L_<(t+1)$, one for each point in $\Pi_{t+1}^{(\lambda)}\cap (y',x)$.\\
A realization of $L_\leq$ is shown on top-right of Fig.\ref{fig:4schemasHammersley}.
\end{itemize}
Processes $L_{< }(t)$ and $L_{\leq}(t)$ are designed in such a way that they record the length of longest increasing/non-decreasing paths in $\Pi$. 
In fact particles trajectories correspond to the level sets of the functions $(x,t)\mapsto \mathcal{L}_<\left( \Pi^{(\lambda)}_{x,t}\right)$,
$(x,t)\mapsto \mathcal{L}_\leq \left( \Pi^{(\lambda)}_{x,t}\right)$.

\begin{prop}\label{prop:NombreLignes}
For every $x$,
$$
\mathcal{L}_<\left( \Pi^{(\lambda)}_{x,t}\right) = \mathrm{card}(L_<(t)),\qquad
\mathcal{L}_\leq\left( \Pi^{(\lambda)}_{x,t}\right) = \mathrm{card}(L_\leq(t)),
$$
where on each right-hand side we consider the particle system on $[0,x]$.
\end{prop}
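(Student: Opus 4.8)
The statement is really just a restatement of the fact that the Hammersley-type processes $L_<,L_\leq$ are constructed so that their cardinality equals the longest increasing (resp. non-decreasing) path length in the poissonized point set. The natural proof is by induction on $t$, the key point being that the steps of the non-decreasing staircase function $y\mapsto \mathcal L_<(y,t)$ are exactly tracked by the particle locations, and that the dynamics of $L_<$ reproduce how these step locations evolve when one passes from line $t$ to line $t+1$.

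\medskip

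\emph{Step 1: The staircase description.} First I would make precise that for each fixed $t$, the function $y\in[0,x]\mapsto \mathcal L_<(y,t):=\mathcal L_<(\Pi^{(\lambda)}_{y,t})$ is non-decreasing, integer-valued, right-continuous (or left-continuous, depending on convention) and increases only by unit jumps; hence it is determined by the finite set of jump locations, which is precisely $L_<(t)$ by definition. Consequently $\mathrm{card}(L_<(t))=\mathcal L_<(x,t)=\mathcal L_<(\Pi^{(\lambda)}_{x,t})$, which is the claim \emph{provided} one knows that the set $L_<(t)$ produced by the dynamics coincides with the true jump-location set of $\mathcal L_<(\cdot,t)$. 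So the whole content is to verify that the dynamics correctly updates the jump-location set; the cardinality identity is then immediate.

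\medskip

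\emph{Step 2: Induction on $t$.} Base case $t=0$: with no points, $\mathcal L_<(y,0)=0$ for all $y$, so the jump set is $\emptyset=L_<(0)$. For the inductive step, assume $L_<(t)$ equals the jump-location set of $\mathcal L_<(\cdot,t)$. I would then analyze how a longest increasing path in $\Pi^{(\lambda)}_{y,t+1}$ decomposes according to whether or not it uses a point on line $t+1$. A path using a point $(z,t+1)$ with $z\le y$ consists of that final point preceded by a longest increasing path in $\Pi^{(\lambda)}_{z^-,t}$, giving $\mathcal L_<(y,t+1)=\max\bigl(\mathcal L_<(y,t),\,1+\max\{\mathcal L_<(z^-,t): (z,t+1)\in\Pi^{(\lambda)},\ z\le y\}\bigr)$. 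From this recursion one reads off where $\mathcal L_<(\cdot,t+1)$ jumps: a point $(z,t+1)$ creates/moves a jump iff $\mathcal L_<(z^-,t)$ equals the value of $\mathcal L_<$ just below some existing jump — i.e. iff $z$ lies just to the left of a particle of $L_<(t)$ (so that landing on $z$ ``beats'' that particle's level). Matching this with the verbal description of the $L_<$ dynamics — each particle at $y$ hunts for the leftmost available point of $\Pi^{(\lambda)}_{t+1}$ to its left, consuming the competing points of $\Pi^{(\lambda)}_{t}\cap(0,y)$, and a new particle is born from any point to the right of $\max L_<(t)$ — shows that $L_<(t+1)$ is exactly the updated jump-location set. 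The same argument with $\prec$ replaced by $\preccurlyeq$, and with line $t$ rather than $t+1$ supplying the competing points (reflecting that for $\preccurlyeq$ a point on the \emph{same} height can extend the path, which is why the $L_\leq$ rule looks one line down and leaves the intermediate points available), gives the statement for $\mathcal L_\leq$ and $L_\leq$.

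\medskip

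\emph{Main obstacle.} The only genuinely delicate point is the bookkeeping in the inductive step: showing that the greedy, left-to-right particle-reassignment rule really does reproduce the combinatorial update of the jump set, including the correct handling of point availability (a consumed point must not be reused by a later particle) and of newly created particles. This is the classical Hammersley ``patience sorting / interchangeability'' argument and is essentially a careful induction within the induction (on particles, processed from left to right), but no new idea beyond the original Aldous--Diaconis analysis is needed; one just has to check that the two modified rules — using line $t+1$ for $L_<$ versus line $t$ for $L_\leq$, and consuming all competing points versus only the landing point — are the ones that match $\prec$ versus $\preccurlyeq$. I would present this as a short lemma comparing the recursion for $\mathcal L$ with the particle update, and then conclude by taking cardinalities.
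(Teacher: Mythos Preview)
Your approach is correct but takes a different route from the paper. You argue by induction on $t$ that the configuration $L_<(t)$ produced by the dynamics coincides with the jump-location set of $y\mapsto\mathcal L_<(y,t)$, after which the cardinality identity is immediate from the staircase structure. The paper instead argues geometrically without any induction: it regards each particle trajectory as a \emph{Hammersley line}, a broken north/west curve through the box $[0,x]\times[0,t]$. Since an increasing subsequence can meet each such line in at most one point, one gets $\mathcal L_<\le\mathrm{card}(L_<(t))$; and conversely one builds, starting from the top-right Hammersley line and working down, an increasing path with exactly one point on each line, giving the reverse inequality. The geometric argument is shorter, avoids the left-to-right bookkeeping you flag as the main obstacle, and extends verbatim to the sources/sinks variant used immediately afterwards. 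Your inductive argument, on the other hand, makes the one-step recursion $\mathcal L_<(y,t+1)=\max\bigl(\mathcal L_<(y,t),\,1+\max_{z}\mathcal L_<(z^-,t)\bigr)$ explicit and confirms directly that the verbal dynamics matches the jump-set definition, which the paper leaves implicit.

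One small correction: your remark that the $L_\leq$ rule ``looks one line down'' at $\Pi^{(\lambda)}_t$ rather than $\Pi^{(\lambda)}_{t+1}$ is a misreading (likely induced by a typo in the paper's description). Both processes update using the points on the \emph{new} line $t+1$; the distinction between $\prec$ and $\preccurlyeq$ is encoded solely in the consumption rule---for $L_<$ a moving particle renders all points of $\Pi^{(\lambda)}_{t+1}$ to its left unavailable, whereas for $L_\leq$ only the landing point is consumed, which is exactly what allows several particles (or several new particles) to land on the same line. With that adjustment your sketch goes through.
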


\begin{proof} We are merely restating the original construction from Hammersley (\cite{Hamm}, Sec.9). We only do the case of $L_<(t)$.

Let us call each particle trajectory a \emph{Hammersley line}. 
By construction each Hammersley line is a broken line starting from the right of the box $[0,x]\times [0,t]$ and is formed by a succession of north/west line segments.
Because of this, two distinct points in a given longest increasing subsequence of  $\Pi^{(\lambda)}_{x,t}$ cannot belong to the same Hammersley line. Since there are $L_<(t)$ Hammersley's lines this gives $\mathcal{L}_<\left( \Pi^{(\lambda)}_{x,t}\right) \leq \mathrm{card}(L_<(t))$.

In order to prove the converse inequality we build from this graphical construction a longest increcreasing subsequence of $ \Pi^{(\lambda)}_{x,t}$ with exactly one point on each Hammersley line. To do so, we order Hammersley's lines from bottom-left to top-right, and we build our path starting from the top-right corner. We first choose any point of  $\Pi^{(\lambda)}_{x,t}$ belonging to the last Hammersley line. We then proceed by induction: we choose the next point among the points of of  $\Pi^{(\lambda)}_{x,t}$ lying on the previous Hammersley line such that the subsequence remains increasing. (This is possible since Hammersley's lines only have North/West line segments.) This proves $\mathcal{L}_<\left( \Pi^{(\lambda)}_{x,t}\right) \geq \mathrm{card}(L_<(t))$.
\end{proof}

\subsection{Sources and sinks: stationarity}

Proposition \ref{prop:NombreLignes} tells us that in on our way to prove Theorem \ref{Th:Strict} and Theorem \ref{Th:Large} we need to understand the asymptotic behaviour of processes $L_{< },L_{\leq}$. 

It is proved in \cite{FerrariMartin} that the homogeneous PPP with intensity $\alpha$ on $\mathbb{R}$ is stationary for  $(L_<(t))_t$.
However we need non-asymptotic estimates for $(L_<(t))_t$ (and  $(L_\leq(t))_t$) on a given interval $(0,x)$.
 To solve this issue we use the trick of \emph{sources/sinks}  introduced formally and exploited by Cator and Groeneboom \cite{CatorGroeneboom} for the continuous HAD process:
\begin{itemize}
\item \emph{Sources} form a finite subset of $[0,x]\times \{0\}$ which plays the role of the initial configuration $L_<(0),L_\leq(0)$.  
\item \emph{Sinks} are points of $\{0\}\times  [1,t] $  which add up to $\Pi^{(\lambda)}$ when one defines the dynamics of $L_<(t),L_\leq(t)$. For $L_\leq(t)$ it makes sense to add several sinks at the same location $(0,i)$ so sinks may have a multiplicity.
\end{itemize}
Examples of dynamics of $L_<,L_\leq$ under the influence of sources/sinks is illustrated at the bottom of Fig.\ref{fig:4schemasHammersley}.

Here is the discrete-time analogous of \cite[Th.3.1.]{CatorGroeneboom}:
\begin{lem}\label{lem:Stationnaire<}
For every $\lambda,\alpha>0$ let $L^{(\alpha,p)}_{< }(t)$ be the Hammersley process defined as $L_{< }(t)$ with: 
\begin{itemize}
\item sources distributed according to a homogeneous PPP with intensity $\alpha$ on $[0,x]\times \{0\}$ ; 
\item sinks distributed according to i.i.d. $\mathrm{Bernoulli}(p)$ with
\begin{equation}\label{eq:alpha_p}
\frac{\lambda}{\lambda +\alpha}=p.
\end{equation}
\end{itemize}
If sources, sinks, and $\Pi^{(\lambda)}$ are independent then the process $\left(L^{(\alpha,p)}_{< }(t)\right)_{t\geq 0}$ is stationary.
\end{lem}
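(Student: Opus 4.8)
The plan is to exhibit a one-step invariant distribution for the Markov chain $\bigl(L_{<}^{(\alpha,p)}(t)\bigr)_{t}$ and check that it is preserved. The natural candidate is: the configuration at any fixed time is a homogeneous PPP of intensity $\alpha$ on $[0,x]$, i.e. the same law as the sources. So the core claim to prove is the \emph{one-step preservation}: if $L_{<}(t)$ is distributed as a rate-$\alpha$ PPP on $[0,x]$, independent of the rate-$\lambda$ PPP line $\Pi_{t+1}^{(\lambda)}$ and of the $\mathrm{Bernoulli}(p)$ sink at $(0,t+1)$, then $L_{<}(t+1)$ is again a rate-$\alpha$ PPP on $[0,x]$.

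To do this I would analyze the transition locally. Superpose the particle configuration (intensity $\alpha$), the new Poisson points on line $t+1$ (intensity $\lambda$), plus the possible sink at the origin with probability $p=\lambda/(\lambda+\alpha)$. The dynamics ``each particle, scanned left to right, jumps to the leftmost still-available Poisson point to its left; points to the left of the jump target are consumed; a Poisson point to the right of the rightmost particle spawns a new particle'' is exactly the semi-discrete Hammersley update, so I would invoke the classical burke-type / reversibility argument of Cator--Groeneboom adapted to this setting: the key structural fact is that on the strip between two consecutive old particles (and between the sink/origin and the first particle) the updated particle location is determined by the \emph{leftmost} Poisson point in that strip, and these strips are handled independently. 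I would compute the law of the gaps of $L_{<}(t+1)$. Writing the old gaps as i.i.d. $\mathrm{Exponential}(\alpha)$ and the spacing of new Poisson points as $\mathrm{Exponential}(\lambda)$, a direct computation (a competing-exponentials / memorylessness argument, with the sink supplying the boundary term at the origin so that no mass is lost there) shows the new gaps are again i.i.d. $\mathrm{Exponential}(\alpha)$, and that the number of particles and their joint law is that of a rate-$\alpha$ PPP on $[0,x]$. The relation $\lambda/(\lambda+\alpha)=p$ is precisely what makes the boundary contribution at $\{0\}$ match, preventing a net drift in the particle density.

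Concretely the steps are: (1) set up the graphical/coupling description of the single update with sources, the Poisson line, and the sink; (2) observe that the update decomposes strip-by-strip and reduces to ``replace each strip's left endpoint datum by the position of the leftmost Poisson point in that strip, or keep it if there is none, with the origin-sink playing the role of a virtual particle at $0$''; (3) carry out the exponential computation showing $\mathrm{Exp}(\alpha)$ gaps are mapped to $\mathrm{Exp}(\alpha)$ gaps when $p=\lambda/(\lambda+\alpha)$, including the boundary strip; (4) conclude that a rate-$\alpha$ PPP is invariant, hence the process started from rate-$\alpha$ sources is stationary. The main obstacle is step (3) together with the bookkeeping at the left boundary: one has to verify carefully that the sink of intensity $p$ exactly compensates the particles that would otherwise ``fall off'' at $0$, and that conditioning/independence is not broken by the left-to-right scanning (the usual subtlety in Hammersley-type stationarity proofs); this is where the precise choice \eqref{eq:alpha_p} enters and where I would be most careful, likely by phrasing the update as a deterministic function of independent PPPs and using the strong Markov / memoryless structure to keep the strips independent after the update.
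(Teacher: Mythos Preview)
Your approach is essentially correct and would succeed, but it differs from the paper's. The paper's main device (given explicitly for $L_\leq$ and then declared to work ``exactly in the same way'' for $L_<$) is to introduce the horizontal process $(H_y)_{0\le y\le x}$ that counts the number of Hammersley lines crossing the point $(y,t)$. For $L_<$ this process is $\{0,1\}$-valued: after a Poisson point one has $H=1$, after an old particle one has $H=0$, so $(H_y)$ is a two-state Markov chain with stationary law $\mathrm{Bernoulli}(p)$ exactly when $p=\lambda/(\lambda+\alpha)$; the sink supplies $H_0\sim\mathrm{Bernoulli}(p)$. The new configuration is then precisely the set of points of the rate-$(\alpha+\lambda)$ superposition at which $H^{-}=0$, and since $H^{-}$ at a point equals the \emph{type} of the previous point (or the sink), this is an independent $\mathrm{Bernoulli}(\alpha/(\alpha+\lambda))$ thinning, giving back a rate-$\alpha$ PPP. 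All the delicate boundary and independence bookkeeping is thus absorbed into one line: stationarity of a two-state chain plus Poisson thinning.

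Your route---compute the law of the new gaps directly from the strip decomposition and competing exponentials---reaches the same conclusion but is heavier. The phrasing in your step~(2) is slightly misleading: while the new \emph{position} $z_i$ depends only on the Poisson points in the strip $(y_{i-1},y_i]$, the new \emph{gap} $z_{i+1}-z_i$ straddles two old strips, so the ``strips handled independently'' picture does not directly give i.i.d.\ new gaps. You correctly flag this as the main obstacle; carrying it out amounts to a mixture computation for each gap (e.g.\ the first new gap is $\mathrm{Exp}(\alpha+\lambda)$ with probability $1-p$ and $\mathrm{Exp}(\alpha)\ast\mathrm{Exp}(\alpha+\lambda)$ with probability $p$, which indeed collapses to $\mathrm{Exp}(\alpha)$), together with a check that these mixtures are jointly independent. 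The paper's $H_y$ device is exactly the tool that makes this independence transparent.
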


\begin{lem}\label{lem:Stationnaire_leq}
For every $\beta>\lambda>0$ let $L^{(\beta,\beta^\star)}_{\leq}(t)$ be the Hammersley process defined as $L_{\leq }(t)$ with: 
\begin{itemize}
\item sources distributed according to a homogeneous PPP with intensity $\beta$ on $[0,x]\times \{0\}$ ; 
\item sinks distributed according to i.i.d. $\mathrm{Geometric}_{\geq 0}(1-\beta^\star)$ with
\begin{equation}\label{eq:alpha}
\beta^\star\beta =\lambda.
\end{equation}
\end{itemize}
If sources, sinks, and $\Pi^{(\lambda)}$ are independent then the process $\left(L^{(\beta,\beta^\star)}_{\leq }(t)\right)_{t\geq 0}$ is stationary.
\end{lem}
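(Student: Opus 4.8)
The plan is to recognise $\bigl(L^{(\beta,\beta^\star)}_{\leq}(t)\bigr)_{t\geq 0}$ as a time‑homogeneous Markov chain and to reduce stationarity to a one‑step invariance statement for the Poisson law. Indeed, the update rule has the form $L^{(\beta,\beta^\star)}_{\leq}(t+1)=\Phi\bigl(L^{(\beta,\beta^\star)}_{\leq}(t),\Pi^{(\lambda)}_{t+1},G_{t+1}\bigr)$ for a deterministic map $\Phi$ implementing the one‑step $L_{\leq}$‑dynamics, where $\Pi^{(\lambda)}_{t+1}$ is the fresh intensity‑$\lambda$ Poisson line at the new height and $G_{t+1}\sim\mathrm{Geometric}_{\geq 0}(1-\beta^\star)$ is the fresh sink, and the pairs $(\Pi^{(\lambda)}_{t+1},G_{t+1})$ are i.i.d.\ and independent of the (intensity‑$\beta$ Poisson) initial configuration. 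For such a chain, stationarity follows once one knows that a homogeneous PPP of intensity $\beta$ is invariant under $\Phi$: writing $\eta\sim\mathrm{PPP}(\beta)$, $\xi\sim\mathrm{PPP}(\lambda)$ and $G\sim\mathrm{Geometric}_{\geq 0}(1-\beta^\star)$ on $[0,x]$, independent, I must show that $\Phi(\eta,\xi,G)\sim\mathrm{PPP}(\beta)$ on $[0,x]$.

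For the one‑step invariance I would follow the Cator--Groeneboom strategy \cite{CatorGroeneboom} and in fact prove the stronger \emph{Burke‑type} identity that the pair formed by the output configuration $\Phi(\eta,\xi,G)$ and the associated output at the left edge $\{0\}$ has the product law $\mathrm{PPP}(\beta)\otimes\mathrm{Geometric}_{\geq 0}(1-\beta^\star)$; this product form is what lets the induction over $t$ close, and it is also the identity that will be needed later to count Hammersley lines through $\Pi^{(\lambda)}_{x,t}$. To establish it I would reveal the atoms of $\xi$ and the slots of the sink from left to right, updating the current particle positions after each elementary move: each such move consumes only a single $\mathrm{Exp}(\beta)$ gap of $\eta$ or a single $\mathrm{Exp}(\lambda)$ gap of $\xi$, so by the memoryless property of the exponential — together with the usual superposition and thinning of Poisson processes — the still‑unrevealed part of the configuration remains, conditionally on the past, a Poisson process of the correct intensity. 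Carrying the bookkeeping through (which particle lands where, and how many sink slots survive) one reads off that the balance at the left boundary is exactly $\beta\cdot\beta^\star=\lambda$ and that the output is again $\mathrm{PPP}(\beta)$. A more conceptual alternative would be to identify one time step with a discrete‑time single‑server queue with geometric services and invoke the corresponding Burke theorem; this is the queueing counterpart of the O'Connell--Yor connection mentioned in the footnote, and it makes transparent why the $\preccurlyeq$ order (rather than $\prec$) forces geometric rather than $\{0,1\}$‑valued sinks.

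The main obstacle, and the genuinely new point compared with the classical strict‑order Hammersley process, is precisely this sink multiplicity. Because $L_{\leq}$ allows ties, several particles may be absorbed at, or injected from, the left edge $\{0\}$ during a single time step, so the sink must be an unbounded random variable; one then has to check that passing the elementary‑move argument through a sink of random geometric size does not destroy the Poisson structure, and that the stationary value $\beta^\star=\lambda/\beta$ (whence the hypothesis $\beta>\lambda$, needed for $\beta^\star<1$) is forced consistently by the left‑boundary balance. Getting this combinatorial bookkeeping right — rather than any single inequality — is where the care lies; the analogous but easier computation with Bernoulli sinks underlies Lemma~\ref{lem:Stationnaire<}.
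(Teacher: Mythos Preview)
Your strategy is correct and close in spirit to the paper's: both reduce to a one--step invariance statement for the $\mathrm{PPP}(\beta)$ law and both scan the new line from left to right. The difference is in execution. Rather than revealing atoms of $\xi$ and sink slots one at a time and tracking the resulting bookkeeping, the paper encodes the entire left--to--right scan in a single auxiliary process: for fixed $t$, let $H_y$ be the number of Hammersley lines passing through $(y,t)$. Then $(H_y)_{0\le y\le x}$ is a random walk reflected at $0$, with $+1$ rate $\lambda$ (a new $\Pi^{(\lambda)}$ atom opens a line) and $-1$ rate $\beta$ (a source/surviving particle closes one); its initial value $H_0$ is the sink at height $t$. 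The $\mathrm{Geometric}_{\geq 0}(1-\beta^\star)$ law is stationary for this walk exactly when $\beta\beta^\star=\lambda$, and the output configuration is then read off as the union of $\Pi_t^{(\lambda)}$ with those old particles that were not consumed by a $-1$ jump, which an exponential/thinning computation shows is again $\mathrm{PPP}(\beta)$.

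What the paper's route buys is brevity and a transparent explanation of where the geometric sink and the relation $\beta\beta^\star=\lambda$ come from: they are simply the stationary law of a birth--death chain. Your route, via an explicit Burke--type statement for the pair (output configuration, left--edge output), gives slightly more --- the joint product law --- at the cost of the elementary--move bookkeeping you allude to but do not carry out. Either argument is fine; if you pursue yours, the cleanest way to organise the ``bookkeeping'' is precisely to define $H_y$ and observe it is a reflected walk.
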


\begin{proof}[Proof of Lemmas \ref{lem:Stationnaire<} and \ref{lem:Stationnaire_leq}]
Lemma \ref{lem:Stationnaire_leq} could be obtained from minor adjustments of \cite[Chap.3, Lemma 3.2]{TheseBoyer}. (Be aware that we have to switch $x\leftrightarrow t$ and sources $\leftrightarrow$ sinks in  \cite{TheseBoyer} in order to fit our setup.) 
For the sake of the reader we however propose the following alternative proof which explains where \eqref{eq:alpha} come from.

Consider for some fixed $t\geq 1$ the process $(H_y)_{0\leq y\leq x} $ given by the number of Hammersley lines passing through the point $(y,t)$. 

\begin{center}
\includegraphics[width=105mm]{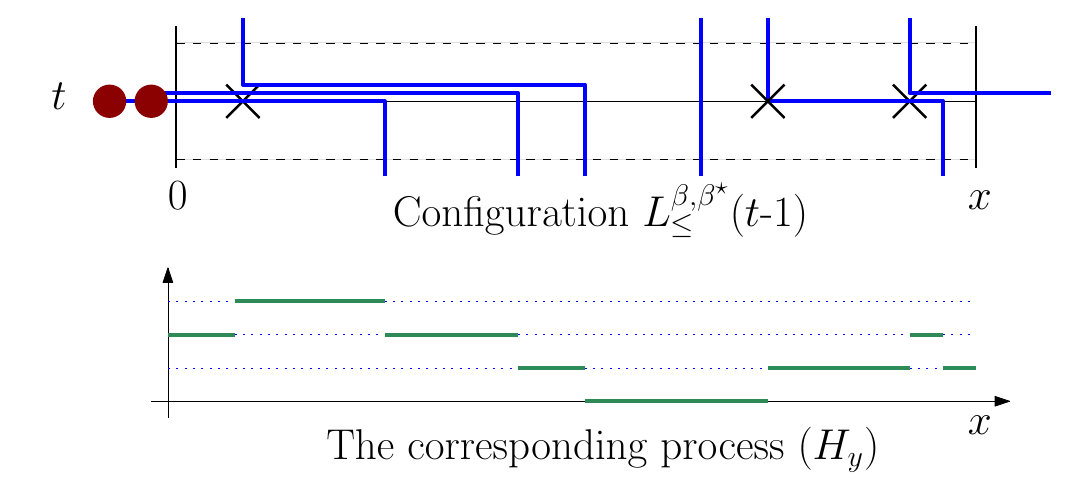}
\end{center}

The initial value $H_0$ is the number of sinks at $(0,t)$, which is distributed as a $\mathrm{Geometric}_{\geq 0}(1-\beta^\star)$. The process $(H_y)$ is a random walk (reflected at zero) with '$+1$ rate' equal to $\lambda$ and '$-1$ rate' equal to $\beta$. (Jumps of  $(H_y)$ are independent from sinks as sinks are independent from $\Pi^{(\lambda)}$.) The $\mathrm{Geometric}_{\geq 0}(1-\beta^\star)$ distribution is stationary for this random walk exactly when \eqref{eq:alpha} holds.
The set of points of $L^{(\beta,\beta^\star)}_{\leq }(t)$ is given  by the union of $\Pi_t^{(\lambda)}$ and the points of $L^{(\beta,\beta^\star)}_{\leq }(t)$ that do not correspond to a '$-1$' jump. Computations given in Appendix \ref{Sec:Appendix} show that this is distributed as a homogeneous PPP with intensity $\beta$.

Lemma \ref{lem:Stationnaire<} is proved exactly in the same way, calculations are even easier. In this case the corresponding process $(H_y)_{0\leq y\leq x} $ takes its values in $\{0,1\}$ and its stationary distribution is the Bernoulli distribution with mean $\lambda/(\alpha+\lambda)$, hence \eqref{eq:alpha_p}.
\end{proof}

\subsection{Processes $L_<(t)$ and $L_\leq(t)$: non-asymptotic bounds}

From Lemmas \ref{lem:Stationnaire<} and \ref{lem:Stationnaire_leq} it is straightforward to derive non-asymptotic upper bounds for $L_<(t),L_\leq(t)$. 

For $y\leq x$ let ${\sf So}^{(\alpha)}_x$ be the random set of sources with intensity $\alpha$ and for $s\leq t$ let  ${\sf Si}^{(p)}_t$ the random set   of sinks with intensity $p$. In particular,
$$
\mathrm{card}({\sf So}^{(\alpha)}_x)\stackrel{\text{(d)}}{=}\mathrm{Poisson}(\alpha x),\qquad \mathrm{card}({\sf Si}^{(p)}_t)\stackrel{\text{(d)}}{=}\mathrm{Binomial}(t,p).
$$
 It is convenient to use the notation $\mathcal{L}_{=<}(\mathcal{P})$
which is, as before, the length of the longest increasing path taking points in $\mathcal{P}$ but when the path is also allowed to go through several sources (which have however the same $y$-coordinate) or several sinks (which have the same $x$-coordinate). Formally,
$$
\mathcal{L}_{=<}(\mathcal{P})=\max\set{L; \hbox{ there exists }P_1  =\prec P_2 =\prec \dots =\prec P_L, \text{ where each }P_i \in \mathcal{P} },\\
$$
where
$$
(x,y)=\prec (x',y') \text{ if }
\begin{cases}
 &x<x'\text{ and }y<y', \\
\text{ or } &x=x'=0 \text{ and }y<y', \\
\text{ or } &x<x' \text{ and }y=y'=0.
\end{cases}
$$
Proposition \ref{prop:NombreLignes} generalizes easily to the settings of sinks and sources.
\begin{claim*}
\begin{equation}\label{prop:NombreLignes_bis}
\mathcal{L}_{=<}\left( \Pi^{(\lambda)}_{x,t}\cup {\sf So}^{(\alpha)}_x\cup {\sf Si}^{(p)}_t\right) =L^{(\alpha,p)}_{< }(t)+\mathrm{card}({\sf Si}^{(p)}_t).
\end{equation}
\end{claim*}
\begin{proof}[Proof of the Claim.] By the same reasoning as in the proof of Proposition \ref{prop:NombreLignes} the LHS is exactly the number of broken lines in the box $[0,x]\times [0,t]$. Each such line escapes the box either through the left (it thus corresponds to a sink) or through the top (and is thus counted by $L^{(\alpha,p)}_{< }(t)$).
\end{proof}

\begin{lem}[Domination for $\mathcal{L}_<$]\label{lem:Majo_L_sources}
For every $\alpha,p \in(0,1)$ such that \eqref{eq:alpha_p} holds,
there is a stochastic domination of the form: 
\begin{equation}\label{eq:Majo_L_sources}
\mathcal{L}_<\left( \Pi^{(\lambda)}_{x,t}\right) \preccurlyeq  \Poi(x\alpha)+ \Bin(t,p).
\end{equation}
(The $ \Poi$ and $\Bin$ random variables involved in \eqref{eq:Majo_L_sources} are not independent.)

\end{lem}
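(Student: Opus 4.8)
The idea is to compare the source-free process with the stationary process of Lemma~\ref{lem:Stationnaire<}. The key observation is monotonicity: adding sources and sinks can only increase the length of longest increasing paths. More precisely, for the $=\prec$ order we have the pointwise domination
\[
\mathcal{L}_<\left( \Pi^{(\lambda)}_{x,t}\right) \leq \mathcal{L}_{=<}\left( \Pi^{(\lambda)}_{x,t}\cup {\sf So}^{(\alpha)}_x\cup {\sf Si}^{(p)}_t\right),
\]
since any increasing path through $\Pi^{(\lambda)}_{x,t}$ is in particular an admissible $=\prec$-path in the enlarged point set. This holds deterministically, for any realization of the sources and sinks, so in particular it holds when sources, sinks and $\Pi^{(\lambda)}$ are coupled independently as in Lemma~\ref{lem:Stationnaire<}.

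First I would invoke the generalized Proposition \eqref{prop:NombreLignes_bis}, which identifies the right-hand side above with $L^{(\alpha,p)}_{<}(t)+\mathrm{card}({\sf Si}^{(p)}_t)$. Next, by Lemma~\ref{lem:Stationnaire<} the process $\left(L^{(\alpha,p)}_{<}(t)\right)_{t\geq 0}$ is stationary, so $\mathrm{card}\bigl(L^{(\alpha,p)}_{<}(t)\bigr)$ has the distribution of the number of particles in the stationary regime, which is the number of sources falling in $[0,x]$ — namely a $\Poi(x\alpha)$ random variable. (This is exactly the "elementary computation" underlying the stationarity proof: in equilibrium the particle configuration is a homogeneous PPP with intensity $\alpha$ on $[0,x]$, so its cardinality is $\Poi(x\alpha)$.) On the other hand $\mathrm{card}({\sf Si}^{(p)}_t)$ is the number of sinks among heights $1,\dots,t$, which by construction is $\Bin(t,p)$. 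Putting these together,
\[
\mathcal{L}_<\left( \Pi^{(\lambda)}_{x,t}\right) \leq \mathrm{card}\bigl(L^{(\alpha,p)}_{<}(t)\bigr) + \mathrm{card}({\sf Si}^{(p)}_t) \stackrel{(d)}{=} \Poi(x\alpha)+\Bin(t,p),
\]
where the equality in distribution is of the pair on the right (the two summands being dependent through the shared randomness of $\Pi^{(\lambda)}$ and the sinks), which is precisely the asserted stochastic domination \eqref{eq:Majo_L_sources}.

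The main point requiring care — and the only genuine obstacle — is justifying the deterministic monotonicity and, relatedly, checking that in the stationary process the number of particles at a given time is genuinely equal in law to $\Poi(x\alpha)$ rather than merely stochastically dominated by it. For the monotonicity one must verify that enlarging the point set never decreases $\mathcal{L}_{=<}$; this is immediate from the definition of $\mathcal{L}_{=<}$ as a maximum over a set of admissible paths that only grows when points are added. For the particle count, one should note that the content of Lemma~\ref{lem:Stationnaire<} is that the law of $L^{(\alpha,p)}_{<}(t)$ is a homogeneous PPP of intensity $\alpha$ on $[0,x]$ for every $t$; hence its cardinality is exactly $\Poi(x\alpha)$. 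One subtlety worth a remark is that the domination is only a domination — equality need not hold — but that is all that is needed, since $\alpha$ (equivalently $p$, via \eqref{eq:alpha_p}) is a free parameter that will be optimized later. I would also remark explicitly, as the statement already flags, that the two random variables on the right-hand side of \eqref{eq:Majo_L_sources} are not independent, since both are measurable with respect to the common underlying randomness.
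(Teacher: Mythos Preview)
Your proof is correct and follows essentially the same approach as the paper: monotonicity under adding sources/sinks, then the identity \eqref{prop:NombreLignes_bis}, then stationarity (Lemma~\ref{lem:Stationnaire<}) to identify $\mathrm{card}(L^{(\alpha,p)}_{<}(t))$ as $\Poi(x\alpha)$, and the definition of sinks to identify $\mathrm{card}({\sf Si}^{(p)}_t)$ as $\Bin(t,p)$. Your additional remarks on why monotonicity is immediate and why the two summands need not be independent are accurate and add clarity.
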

\begin{proof}
Adding sources and sinks may not decrease longest increasing paths. Thus,
\begin{align*}
\mathcal{L}_<\left( \Pi^{(\lambda)}_{x,t}\right) &\preccurlyeq \mathcal{L}_{=<}\left( \Pi^{(\lambda)}_{x,t}\cup {\sf So}^{(\alpha)}_x\cup {\sf Si}^{(p)}_t\right) \\
&= L^{(\alpha,p)}_{< }(t)+\mathrm{card}({\sf Si}^{(p)})\text{ (using \eqref{prop:NombreLignes_bis})}\\
&\stackrel{\text{(d)}}{=} L^{(\alpha,p)}_{< }(0)+\mathrm{card}({\sf Si}^{(p)}) \text{ (using stationarity: Lemma \ref{lem:Stationnaire<})}\\
&\stackrel{\text{(d)}}{=}  \Poi(x\alpha)+ \Bin(t,p).\qedhere
\end{align*}
\end{proof}
Taking expectations in \eqref{eq:Majo_L_sources} we obtain
$$
\mathbb{E}\left[\mathcal{L}_<\left(\Pi^{(\lambda)}_{x,t}\right)\right] \leq  x\alpha +tp.
$$
The LHS in the above equation  does not depend on $\alpha,p$ so the idea is to apply \eqref{eq:Majo_L_sources} with the minimizing choice
$$
\bar{\alpha},\bar{p}:=\mathrm{argmin}_{\alpha,p\text{ satisfying \eqref{eq:alpha_p}}} \left\{ x\alpha +tp\right\},
$$
\emph{i.e.}
\begin{equation}\label{eq:alpha_p_bar}
\bar{\alpha}=\sqrt{\frac{t\lambda}{x}}-\lambda,\qquad \bar{p}=\sqrt{\frac{x\lambda}{t}},\qquad  x\bar{\alpha} + t\bar{p}=2\sqrt{xt\lambda}-x\lambda.
\end{equation}
We have proved
$$
\mathbb{E}\left[\mathcal{L}_<\left(\Pi^{(\lambda)}_{x,t}\right)\right] \leq  2\sqrt{xt\lambda}-x\lambda.
$$
(Compare with \eqref{eq:Theta<}.) We have a similar statement for non-decreasing subsequences:
\begin{lem}[Domination for $\mathcal{L}_\leq$]\label{lem:Majo_L_leq_sources}
For every $\beta,\beta^\star \in(0,1)$  such that \eqref{eq:alpha} holds,
there is a stochastic domination of the form: 
\begin{equation}\label{eq:Majo_L_leq_sources}
\mathcal{L}_\leq\left( \Pi^{(\lambda)}_{x,t}\right) \preccurlyeq  \Poi(x\beta)+ \mathcal{G}_1^{(\beta^\star)}+ \dots +  \mathcal{G}_t^{(\beta^\star)},
\end{equation}
where $\mathcal{G}_i^{(\beta^\star)}$'s are i.i.d. $\mathrm{Geometric}_{\geq 0}(1-\beta^\star)$. 
\end{lem}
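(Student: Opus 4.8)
The plan is to transcribe the proof of Lemma \ref{lem:Majo_L_sources} almost verbatim, replacing the strict order by $\preccurlyeq$, the Bernoulli sinks by the geometric sinks of Lemma \ref{lem:Stationnaire_leq}, and the modified strict order $=\prec$ by its non-decreasing analogue. So the first step is to introduce, exactly as $\mathcal{L}_{=<}$ was introduced above, the functional $\mathcal{L}_{=\leq}(\mathcal{P})$: the length of the longest non-decreasing path on $\mathcal{P}$ that is moreover allowed to run through several sources on the horizontal axis and through several sinks on the vertical axis, the latter counted with multiplicity since in the $L_\leq$-dynamics several sinks may sit at the same height. Formally, one runs chains $P_1 =\preccurlyeq P_2 =\preccurlyeq \cdots =\preccurlyeq P_L$ with $P_i\in\mathcal{P}$, where $=\preccurlyeq$ agrees with $\preccurlyeq$ except that it additionally relates $(0,y)=\preccurlyeq(0,y')$ whenever $y\le y'$ (so that a chain may pick up all the sinks at a given height, which belong to distinct Hammersley lines); note that two sources on $\{y=0\}$ are already comparable for $\preccurlyeq$, so no extra clause is needed for them, in contrast with the strict case.

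The next step, and the one I expect to require the most care, is the analogue of \eqref{prop:NombreLignes_bis}:
$$
\mathcal{L}_{=\leq}\!\left(\Pi^{(\lambda)}_{x,t}\cup{\sf So}^{(\beta)}_x\cup{\sf Si}^{(\beta^\star)}_t\right)
=\mathrm{card}\!\left(L^{(\beta,\beta^\star)}_{\leq}(t)\right)+\mathrm{card}\!\left({\sf Si}^{(\beta^\star)}_t\right).
$$
This should be proved by the same Hammersley-line bookkeeping as Proposition \ref{prop:NombreLignes}: the right-hand side counts all Hammersley lines that ever lived in $[0,x]\times[0,t]$, namely the $\mathrm{card}(L^{(\beta,\beta^\star)}_{\leq}(t))$ still present at height $t$ together with one line for each sink (a sink being exactly the trace of a line leaving through the left), while, thanks to the North/West structure of the lines and the relaxation on the vertical axis, a $=\preccurlyeq$-chain can be constructed meeting every such line in exactly one point — a source, an interior Poisson point, or a sink. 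The delicate points are that several sinks stacked at one height really do correspond to distinct lines and must all be reachable, and that passing to the order $\preccurlyeq$ (whose two coordinates play asymmetric roles) does not break the North/West argument; but this is the exact counterpart of the (unproved) passage from Proposition \ref{prop:NombreLignes} to \eqref{prop:NombreLignes_bis}, so no genuinely new idea is involved.

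Granting this identity, the conclusion follows the strict case step for step. Since every $\preccurlyeq$-chain is a $=\preccurlyeq$-chain and enlarging the point set cannot shorten a longest path,
$$
\mathcal{L}_\leq\!\left( \Pi^{(\lambda)}_{x,t}\right)
\le \mathcal{L}_{=\leq}\!\left(\Pi^{(\lambda)}_{x,t}\cup{\sf So}^{(\beta)}_x\cup{\sf Si}^{(\beta^\star)}_t\right)
=\mathrm{card}\!\left(L^{(\beta,\beta^\star)}_{\leq}(t)\right)+\mathrm{card}\!\left({\sf Si}^{(\beta^\star)}_t\right),
$$
and this deterministic inequality yields the stochastic domination. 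By the stationarity in Lemma \ref{lem:Stationnaire_leq}, $\mathrm{card}(L^{(\beta,\beta^\star)}_{\leq}(t))\stackrel{(d)}{=}\mathrm{card}(L^{(\beta,\beta^\star)}_{\leq}(0))$, and the latter is simply the number of sources, a $\Poi(x\beta)$ variable; meanwhile $\mathrm{card}({\sf Si}^{(\beta^\star)}_t)=\mathcal{G}_1^{(\beta^\star)}+\cdots+\mathcal{G}_t^{(\beta^\star)}$ with i.i.d. $\mathrm{Geometric}_{\geq 0}(1-\beta^\star)$ summands, by definition of the sinks in Lemma \ref{lem:Stationnaire_leq}. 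Combining the last two displays gives \eqref{eq:Majo_L_leq_sources}. As in the strict case, taking expectations and minimizing $x\beta+t\frac{\beta^\star}{1-\beta^\star}$ over $(\beta,\beta^\star)$ subject to \eqref{eq:alpha} (optimum at $\beta=\lambda+\sqrt{t\lambda/x}$) then gives $\mathbb{E}[\mathcal{L}_\leq(\Pi^{(\lambda)}_{x,t})]\le 2\sqrt{xt\lambda}+x\lambda$, to be compared with \eqref{eq:Theta_leq}.
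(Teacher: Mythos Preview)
Your proposal is correct and follows exactly the approach the paper intends: the paper does not give an explicit proof of this lemma but only signals it as ``a similar statement for non-decreasing subsequences'', implicitly asking the reader to rerun the proof of Lemma~\ref{lem:Majo_L_sources} with the geometric sinks of Lemma~\ref{lem:Stationnaire_leq} in place of the Bernoulli ones. You carry out precisely that substitution, including the analogue of \eqref{prop:NombreLignes_bis} and the final expectation bound, so there is nothing to add.
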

We put
\begin{equation}\label{eq:alpha_bar}
\bar{\beta},\bar{\beta}^\star:=\mathrm{argmin}_{\beta,\beta^\star\text{ satisfying \eqref{eq:alpha}}} \left\{ x\beta + t\left(\frac{\beta^\star}{1-\beta^\star}\right)\right\},
\end{equation}
\emph{i.e.}
\begin{equation}\label{eq:alpha_alpha_bar}
\bar{\beta}=\sqrt{\frac{t\lambda}{x}}+\lambda,\qquad \bar{\beta}^\star=\frac{1}{1+\sqrt{t/x\lambda}},
\qquad  x\bar{\beta} + t\left(\frac{\bar{\beta}^\star}{1-\bar{\beta}^\star}\right)=2\sqrt{xt\lambda}+x\lambda.
\end{equation}
(In particular $\bar{\beta}>\lambda$, as required in Lemma \ref{lem:Stationnaire_leq}.)
Eq.\eqref{eq:Majo_L_leq_sources} yields
\begin{equation}\label{eq:esperance_upper_leq}
\mathbb{E}\left[\mathcal{L}_\leq\left( \Pi^{(\lambda)}_{x,t}\right)\right] \leq  2\sqrt{xt\lambda}+x\lambda.
\end{equation}
(Compare with \eqref{eq:Theta_leq}.) 


\begin{theo}[Concentration for $\mathcal{L}_<$, $\mathcal{L}_\leq$]\label{Th:BorneInf_L}

There exist strictly positive functions $g,h$ such that for all $\varepsilon>0$ and for every $x,t\geq 1$, $\lambda >0$ such that $t\geq x\lambda$
\begin{align}
\P(\mathcal{L}_{<}(\Pi^{(\lambda)}_{x,t}) &>(1+\eps)(2\sqrt{xt\lambda}-x\lambda) )\le \exp(-g(\varepsilon)(\sqrt{xt\lambda}-x\lambda)) \label{eq:upper},\\
\P(\mathcal{L}_{<}(\Pi^{(\lambda)}_{x,t}) &<(1-\eps)(2\sqrt{xt\lambda}-x\lambda) )\le \exp(-h(\varepsilon)(\sqrt{xt\lambda}-x\lambda)).\label{eq:lower}
\end{align}
Similarly:
\begin{align}
\P(\mathcal{L}_{\leq}(\Pi^{(\lambda)}_{x,t}) &>(1+\eps)(2\sqrt{xt\lambda}+x\lambda) )\le \exp(-g(\varepsilon)\sqrt{xt\lambda}) \label{eq:upper_leq},\\
\P(\mathcal{L}_{\leq}(\Pi^{(\lambda)}_{x,t}) &<(1-\eps)(2\sqrt{xt\lambda}+x\lambda) )\le \exp(-h(\varepsilon)\sqrt{xt\lambda}).\label{eq:lower_leq}
\end{align}
\end{theo}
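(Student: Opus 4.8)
\emph{Upper tails (the easy half).} For \eqref{eq:upper} and \eqref{eq:upper_leq} the plan is to push the stochastic dominations of Lemmas~\ref{lem:Majo_L_sources} and~\ref{lem:Majo_L_leq_sources} through classical Chernoff estimates. With the optimal parameters of \eqref{eq:alpha_p_bar} one has $\mathcal{L}_<(\Pi^{(\lambda)}_{x,t})\preccurlyeq \Poi(x\bar\alpha)+\Bin(t,\bar p)$, where $x\bar\alpha=\sqrt{xt\lambda}-x\lambda$ and $t\bar p=\sqrt{xt\lambda}$, with total mean $m_<:=2\sqrt{xt\lambda}-x\lambda$. A sum of nonnegative variables can exceed $(1+\eps)$ times the sum of the means only if one summand exceeds $(1+\eps)$ times its own mean, so a union bound reduces us to the exponential upper tails of a single Poisson and a single Binomial; these give factors $\exp(-c(\eps)x\bar\alpha)$ and $\exp(-c(\eps)t\bar p)$ for an explicit $c(\eps)>0$, and since $x\bar\alpha\le t\bar p$ both exponents are $\ge c(\eps)(\sqrt{xt\lambda}-x\lambda)$, which after absorbing the factor $2$ yields \eqref{eq:upper}. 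The same argument applied to the sum $\Poi(x\bar\beta)+\mathcal{G}_1^{(\bar\beta^\star)}+\dots+\mathcal{G}_t^{(\bar\beta^\star)}$ of Lemma~\ref{lem:Majo_L_leq_sources} gives \eqref{eq:upper_leq}: here $x\bar\beta=\sqrt{xt\lambda}+x\lambda$ and the sum of $t$ i.i.d.\ geometrics (total mean $\sqrt{xt\lambda}\le t$, using $t\ge x\lambda$) has exponential upper tail at scale $\sqrt{xt\lambda}$, which explains why the rate there is $\sqrt{xt\lambda}$ rather than $\sqrt{xt\lambda}-x\lambda$.

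\emph{Lower tails: reduction to an exit point.} For \eqref{eq:lower} I would compare $\mathcal{L}_<(\Pi^{(\lambda)}_{x,t})$ with the stationary process $L^{(\bar\alpha,\bar p)}_<$ of Lemma~\ref{lem:Stationnaire<}. Set $\tilde L:=\mathcal{L}_{=<}(\Pi^{(\lambda)}_{x,t}\cup{\sf So}^{(\bar\alpha)}_x\cup{\sf Si}^{(\bar p)}_t)$. By \eqref{prop:NombreLignes_bis} and stationarity (exactly the computation in the proof of Lemma~\ref{lem:Majo_L_sources}) one has $\tilde L\stackrel{(d)}{=}\Poi(x\bar\alpha)+\Bin(t,\bar p)$ with \emph{independent} summands, so the Chernoff bounds above also give $\P(\tilde L<(1-\tfrac\eps2)m_<)\le \exp(-h_0(\eps)(\sqrt{xt\lambda}-x\lambda))$. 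It then remains to show that adding the sources and sinks costs little. An optimal $=\prec$-path in $\Pi^{(\lambda)}_{x,t}\cup{\sf So}^{(\bar\alpha)}_x\cup{\sf Si}^{(\bar p)}_t$ uses sources or sinks but never both (the two $=\prec$-relations are incompatible), and if it uses sources it uses all of them up to the position $Z$ of its rightmost used source; hence $\tilde L\le\mathcal{L}_<(\Pi^{(\lambda)}_{x,t})+N_{\sf So}+N_{\sf Si}$, where $N_{\sf So}={\sf So}^{(\bar\alpha)}_x([0,Z])$ and $N_{\sf Si}$ is the symmetric quantity (one of the two being $0$), so it is enough to bound $\P(N_{\sf So}>\tfrac\eps2 m_<)$ and $\P(N_{\sf Si}>\tfrac\eps2 m_<)$.

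\emph{Lower tails: the exit-point estimate (main obstacle).} Controlling $Z$, hence $N_{\sf So}$, is the crux; I expect this to be the hard part. I would bound it by a monotone-coupling comparison in the spirit of Cator--Groeneboom~\cite{CatorGroeneboom}: couple the source/sink fields for the parameter $\bar\alpha$ with those for perturbed parameters $\bar\alpha\pm\delta$ (the sinks adjusted through \eqref{eq:alpha_p}); increasing the source intensity can only push the source-side exit point to the right, while the gain in $\mathbb{E}[\tilde L]$ produced by an infinitesimal increase of $\delta$ is controlled on the one hand by $\mathbb{E}[Z]$ and on the other by the increment of the explicit function $\alpha\mapsto x\alpha+t\lambda/(\lambda+\alpha)$, which is smooth, strictly convex, and minimal exactly at $\bar\alpha$ (see \eqref{eq:alpha_p_bar}). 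Quantifying this --- iterating the comparison over a geometric range of scales $\delta$, and feeding in the exponential upper tails \eqref{eq:upper} for $\mathcal{L}_<$ in sub-boxes $(z,x]\times[1,t]$ together with Poisson concentration of the source field --- should turn the vanishing of the derivative of $\alpha\mapsto x\alpha+t\lambda/(\lambda+\alpha)$ at $\bar\alpha$ into a bound $\P(N_{\sf So}>u)\le \exp(-c\,\psi(u))$ with $\psi(\tfrac\eps2 m_<)\gtrsim \sqrt{xt\lambda}-x\lambda$ uniformly in $(x,t,\lambda)$. The delicate point --- and the reason the usual limit-shape plus convex-duality route is unavailable --- is that everything must be non-asymptotic and uniform across three independently varying scales, so one has to track the curvature of $\alpha\mapsto x\alpha+t\lambda/(\lambda+\alpha)$ at $\bar\alpha$ (of order $x^{3/2}/\sqrt{t\lambda}$) against the fluctuation scale of the longest path. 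The bound on $N_{\sf Si}$ follows by the symmetric comparison with $\bar\alpha-\delta$; a union bound with $\P(\tilde L<(1-\tfrac\eps2)m_<)$ then gives \eqref{eq:lower}.

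\emph{The non-decreasing case.} For \eqref{eq:lower_leq} I would run the same scheme with $L^{(\bar\alpha,\bar p)}_<$ replaced by the stationary process $L^{(\bar\beta,\bar\beta^\star)}_\leq$ of Lemma~\ref{lem:Stationnaire_leq} (Poisson sources of intensity $\bar\beta$, geometric sinks), the domination of Lemma~\ref{lem:Majo_L_leq_sources}, and the minimiser \eqref{eq:alpha_alpha_bar} of $\beta\mapsto x\beta+t\beta^\star/(1-\beta^\star)$ under $\beta^\star\beta=\lambda$, the comparison now being carried out in $\beta$. Because the dominant component here always has mean of order $\sqrt{xt\lambda}$, the computations deliver the rates $\sqrt{xt\lambda}$ appearing in \eqref{eq:upper_leq}--\eqref{eq:lower_leq}.
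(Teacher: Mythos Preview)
Your upper-tail argument and your reduction of the lower tail to bounding the number of sources and sinks on an optimal stationary path are exactly what the paper does (this is Lemma~\ref{lem:lower_upper_star} together with the inequality $L_<(t)\ge \tilde L-{\sf sources}(\mathcal P)-{\sf sinks}(\mathcal P)$ at the very end of the proof).

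The divergence is in the exit-point estimate itself. You propose a Cator--Groeneboom style parameter-perturbation: couple the stationary systems at $\bar\alpha$ and $\bar\alpha\pm\delta$ and exploit that the derivative of $\alpha\mapsto x\alpha+t\lambda/(\lambda+\alpha)$ vanishes at $\bar\alpha$. The paper never couples two values of $\alpha$; it takes a more elementary route using only the ingredients you label (b) and (c). For each fixed $\eps\in(0,1)$ it sets
\[
L^\star_{=<,\eps}:=\mathrm{card}\bigl({\sf So}^{(\bar\alpha)}_{\eps x}\bigr)+\mathcal{L}_<\bigl(\Pi^{(\lambda)}_{x,t}\cap([\eps x,x]\times[0,t])\bigr),
\]
the best value attainable by a path that exits the $x$-axis at $\eps x$. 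The first term is controlled by Poisson concentration, the second by the already-proved upper tail~\eqref{eq:upper} applied to the sub-box of width $(1-\eps)x$. An explicit one-line identity (eq.~\eqref{eq:useful}) shows that the resulting typical value is $2\sqrt{xt\lambda}-x\lambda-\delta(\eps)\sqrt{xt\lambda}$ with $\delta(\eps)=2-\eps-2\sqrt{1-\eps}>0$; this gap \emph{is} the curvature you invoke, but it appears through direct algebra rather than differentiation or coupling. Lemma~\ref{lem:PeuDeSources} then follows by discretising the exit location over a grid of $K\asymp\eta^{-3}$ values of $\eps$ and taking a union bound.

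Your coupling scheme is the standard tool for fluctuation exponents and could presumably be made to work, but it is heavier than necessary here and your sketch leaves the passage from ``vanishing derivative'' to ``exponential tail for $N_{\sf So}$, uniform in $(x,t,\lambda)$'' unresolved. The paper's direct computation sidesteps this entirely, and the uniformity is automatic because every estimate is written in terms of the single quantity $\sqrt{xt\lambda}-x\lambda$.
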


For the proof of Theorem \ref{Th:BorneInf_L} we will focus on the case of $\mathcal{L}_<$, \emph{i.e.} eq.\eqref{eq:upper}, \eqref{eq:lower}. When necessary we will give the slight modification needed to prove eq.\eqref{eq:upper_leq} and \eqref{eq:lower_leq}.
The beginning of the proof mimics Lemmas 4.1 and 4.2 in \cite{NousAlea}.

We first prove similar  bounds for the stationary processes with minimizing sources and sinks.

\begin{lem}[Concentration for $\mathcal{L}_<$ with sources and sinks]\label{lem:lower_upper_star}
Let $\bar{\alpha},\bar{p}$ be defined by \eqref{eq:alpha_p_bar}.
There exists  a strictly positive function $g_1$ such that for all $\varepsilon>0$ and for every $x,t\geq 1$, $\lambda >0$ such that $t\geq x\lambda$\begin{align}
\P(\mathcal{L}_{=<}(\Pi^{(\lambda)}_{x,t}\cup {\sf So}^{(\bar{\alpha})}_x\cup {\sf Si}^{(\bar{p})}_t) &>(1+\eps)(2\sqrt{xt\lambda}-x\lambda) )\le 2\exp(-g_1(\varepsilon)(\sqrt{xt\lambda}-x\lambda)) \label{eq:upper_star}\\
\P(\mathcal{L}_{=<}(\Pi^{(\lambda)}_{x,t}\cup {\sf So}^{(\bar{\alpha})}_x\cup {\sf Si}^{(\bar{p})}_t) &<(1-\eps)(2\sqrt{xt\lambda}-x\lambda) )\le 2\exp(-g_1(\varepsilon)(\sqrt{xt\lambda}-x\lambda)) \label{eq:lower_star}.
\end{align} 
\end{lem}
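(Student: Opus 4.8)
The plan is to exploit the stationarity from Lemma \ref{lem:Stationnaire<} together with the identity \eqref{prop:NombreLignes_bis}, which rewrites the quantity $\mathcal{L}_{=<}(\Pi^{(\lambda)}_{x,t}\cup {\sf So}^{(\bar\alpha)}_x\cup {\sf Si}^{(\bar p)}_t)$ as $L^{(\bar\alpha,\bar p)}_<(t)+\mathrm{card}({\sf Si}^{(\bar p)}_t)$. Since the process is stationary, $L^{(\bar\alpha,\bar p)}_<(t)$ has the same law as $L^{(\bar\alpha,\bar p)}_<(0)$, which by construction is a $\Poi(x\bar\alpha)$ random variable, and $\mathrm{card}({\sf Si}^{(\bar p)}_t)$ is $\Bin(t,\bar p)$ (independent of the former, since sources and sinks are independent). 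Thus the quantity in the lemma is \emph{exactly} distributed as an independent sum $\Poi(x\bar\alpha)+\Bin(t,\bar p)$, with mean $x\bar\alpha+t\bar p=2\sqrt{xt\lambda}-x\lambda$ by \eqref{eq:alpha_p_bar}. So the lemma reduces to a standard concentration estimate for this sum around its mean.

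The second step is therefore purely a Chernoff/Cramér bound. For the Poisson part, $\P(\Poi(\mu)>(1+\delta)\mu)\le \exp(-\mu\,I_+(\delta))$ and $\P(\Poi(\mu)<(1-\delta)\mu)\le\exp(-\mu\,I_-(\delta))$ with the usual rate functions $I_\pm(\delta)>0$ for $\delta\in(0,1)$; similarly for the Binomial part, $\P(\Bin(t,q)>(1+\delta)tq)\le\exp(-tq\,J_+(\delta))$ and the matching lower deviation bound, with $J_\pm(\delta)>0$. Writing $\mu=x\bar\alpha$, $\nu=t\bar p$, so that $\mu+\nu=2\sqrt{xt\lambda}-x\lambda$, a deviation of the sum by $\eps(\mu+\nu)$ forces one of the two summands to deviate by at least $\eps$ times its own mean; a union bound then gives a bound of the form $\exp(-c(\eps)\mu)+\exp(-c(\eps)\nu)\le 2\exp(-c(\eps)\min(\mu,\nu))$. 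It remains to check that $\min(x\bar\alpha,t\bar p)\ge c'\,(\sqrt{xt\lambda}-x\lambda)$ for an absolute constant $c'>0$ under the hypothesis $t\ge x\lambda$: indeed $t\bar p=\sqrt{xt\lambda}\ge \sqrt{xt\lambda}-x\lambda$, and $x\bar\alpha=\sqrt{xt\lambda}-x\lambda$ on the nose, so in fact $\min(x\bar\alpha,t\bar p)\ge \sqrt{xt\lambda}-x\lambda$ directly. Setting $g_1(\eps)$ to be $c(\eps)$ times this constant finishes the proof, the factor $2$ in front absorbing the two terms of the union bound.

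I do not anticipate a genuine obstacle here: the only mildly delicate point is bookkeeping the rate functions so that a \emph{single} function $g_1(\eps)>0$ works simultaneously for all four inequalities \eqref{eq:upper_star}--\eqref{eq:lower_star} and uniformly in $x,t,\lambda$ (subject to $t\ge x\lambda$). This is handled by taking $g_1(\eps):=\min\{I_+(\eps),I_-(\eps),J_+(\eps),J_-(\eps)\}>0$, which depends on $\eps$ only because the Chernoff rates for Poisson and Binomial are scale-invariant in the mean. Note also that when $x\lambda$ is close to $\sqrt{xt\lambda}$ (i.e.\ $t$ close to $x\lambda$) the target $\sqrt{xt\lambda}-x\lambda$ is small, so the bound is only nontrivial in that regime because the exponent is small too — this is consistent and requires no special care. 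The subsequent Theorem \ref{Th:BorneInf_L} will then be obtained by combining this lemma (applied to the process with sources/sinks, which dominates $\mathcal{L}_<(\Pi^{(\lambda)}_{x,t})$ from above for \eqref{eq:upper}) with a matching lower bound argument removing the sources and sinks, but that is beyond the present statement.
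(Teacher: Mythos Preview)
Your approach is essentially the paper's: stationarity plus \eqref{prop:NombreLignes_bis} identify the law, and then a union bound reduces to Chernoff tails for the Poisson and Binomial marginals, with $\min(x\bar\alpha,t\bar p)=x\bar\alpha=\sqrt{xt\lambda}-x\lambda$ giving the exponent (the paper even lands on the explicit $g_1(\eps)=\eps^2/12$). One small correction that does not affect your argument: the Poisson and Binomial pieces are \emph{not} independent (the paper flags this after Lemma~\ref{lem:Majo_L_sources}, since $L^{(\bar\alpha,\bar p)}_<(t)$ depends on the sinks), but your union bound only needs the two marginals, so the conclusion stands.
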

\begin{proof}[Proof of Lemma \ref{lem:lower_upper_star}]

By stationarity (Lemma \ref{lem:Stationnaire<}) we have
 $$
\mathcal{L}_{=<}(\Pi^{(\lambda)}_{x,t}\cup {\sf So}^{(\bar{\alpha})}_x\cup {\sf Si}^{(\bar{p})}_t) \overset{(d)}{=}\Poi(x\bar{\alpha})+\Bin(t,\bar{p}).
 $$
 Then
\begin{align*}
 \P(\mathcal{L}_{=<}(\Pi^{(\lambda)}_{x,t}\cup {\sf So}^{(\bar{\alpha})}_x\cup {\sf Si}^{(\bar{p})}_t>(1+\eps)(2\sqrt{xt\lambda}-x\lambda))
\le  &\ \P\left(\Poi(x\bar{\alpha})>(1+\frac{\varepsilon}{2})(\sqrt{xt\lambda}-x\lambda)\right)\\
+&\ \P\left(\Bin(t,\bar{p})>(1+\frac{\varepsilon}{2})\sqrt{xt\lambda}\right).
\end{align*}

Recall that $x\bar{\alpha}=\sqrt{xt\lambda}-x\lambda$, $t\bar{p}=\sqrt{xt\lambda}$. Using the tail inequality for the Poisson distribution (Lemma \ref{lem:Chernov} (i)):
\begin{align*}
\P\left(\Poi(x\bar{\alpha})>(1+\frac{\varepsilon}{2})(\sqrt{xt\lambda}-x\lambda)\right)&\le  
\exp\left(-(\sqrt{xt\lambda}-x\lambda)\eps^2/4\right).
\end{align*}
Using  the tail inequality for the binomial (Lemma \ref{lem:ChernovB}) we get 
\begin{equation}\label{eq:Binom}
\P\left(\Bin(t,\bar{p})>(1+\frac{\varepsilon}{2})\sqrt{xt\lambda}\right) \le  \exp(-\tfrac{1}{12}\varepsilon^2\sqrt{xt\lambda})\leq  \exp(-\tfrac{1}{12}\varepsilon^2(\sqrt{xt\lambda}-x\lambda))
\end{equation}
The proof of  \eqref{eq:lower_star} is identical.
This shows Lemma \ref{lem:lower_upper_star} with $g_1(\eps)=\eps^2/12$.
 \end{proof}
 
For longest non-decreasing subsequences we have a statement similar to Lemma \ref{lem:lower_upper_star}. The only modification in the proof is that in order to estimate the number of sinks one has to replace  Lemma \ref{lem:ChernovB} (tail inequality for the Binomial) by Lemma \ref{lem:ChernovG} (tail inequality for a sum of geometric random variables\footnote{Note that it is only stated for $0<\eps<1$ but this is enough for our purpose since the left-hand side of \eqref{eq:Binom} is non-increasing in $\eps$.}). During the proof we need to bound $\sqrt{xt\lambda}+x\lambda$ by $\sqrt{xt\lambda}$, this explains the form of the right-hand side in eq.\eqref{eq:upper_leq} and \eqref{eq:lower_leq}.
 
 \begin{proof}[Proof of Theorem \ref{Th:BorneInf_L}]
Adding sources/sinks may not decrease $\mathcal{L}_{<}$ so
$$
\mathcal{L}_{=<}(\Pi^{(\lambda)}_{x,t}\cup {\sf So}^{(\bar{\alpha})}_x\cup {\sf Si}^{(\bar{p})}_t)
\succcurlyeq \mathcal{L}_{<}(\Pi^{(\lambda)}_{x,t}),
$$
thus the upper bound \eqref{eq:upper} is a direct consequence of Lemma \ref{lem:lower_upper_star}.

%

\begin{figure}
\begin{center}
\includegraphics[width=160mm]{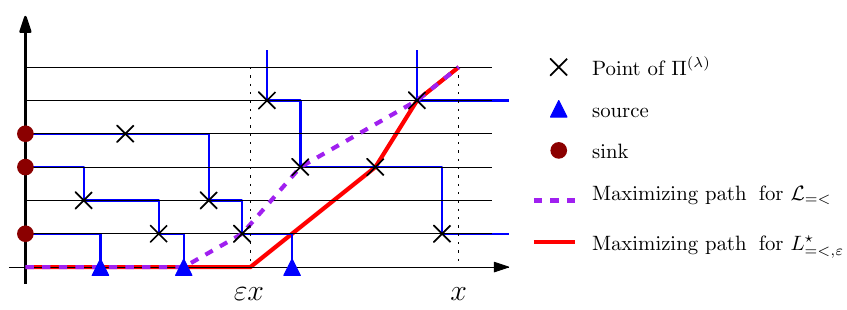} 
\end{center}
\caption{A sample of $\Pi^{(\lambda)}_{x,t}$, sources, sinks, and the corresponding trajectories of particles (in blue).  Here $\mathcal{L}_{=<}(\Pi^{(\lambda)}_{x,t}\cup {\sf So}^{(\alpha)}_x\cup {\sf Si}^{(p)}_t)=5$ (pink path) and $L^{(\alpha,p)}_{< }(t)=2$ (two remaining particles at the top of the box).}
\label{fig:SourcesForcees}
\end{figure}

Let us now prove the lower bound. We consider the length of a maximizing path among those using sources from $0$ to $\eps x$ and then only increasing points of $\Pi^{(\lambda)}_{x,t}\cap \left([\eps x,x]\times [0,t]\right)$ (see Fig.\ref{fig:SourcesForcees}). Formally we set
\begin{align}
L_{=<,\eps}^\star&:=
\mathrm{card}\left({\sf So}^{(\bar{\alpha})}_{\eps x}\right)+
\mathcal{L}_{<}\left(  (\Pi^{(\lambda)}_{x,t}\cap \left([\eps x,x]\times [0,t]\right)\right)\notag\\
&\stackrel{\text{(d)}}{=}
\Poi(\eps x\bar{\alpha}) + \mathcal{L}_{<}\left(  (\Pi^{(\lambda)}_{x,t}\cap \left([\eps x,x]\times [0,t]\right)\right).
\label{eq:SourcesForcees}
\end{align}
The idea is that for any fixed $\eps$ the paths contributing to $L_{=<,\eps}^\star$ will typically not contribute to $\mathcal{L}_{=<}\left( \Pi^{(\lambda)}_{x,t}\cup {\sf So}^{(\bar{\alpha})}_x\cup {\sf Si}^{(\bar{p})}_t\right) =L^{(\bar{\alpha},p)}_{< }(t)+\mathrm{card}({\sf Si}^{(\bar{p})}_t)$. Indeed eq.\eqref{eq:SourcesForcees} suggests that for large $x,t$
\begin{align*}
L_{=<,\eps}^\star&\approx \mathbb{E}[\Poi(\eps x\bar{\alpha})] + \mathbb{E}\left[\mathcal{L}_{<}\left(  \Pi^{(\lambda)}_{x,t}\cap \left([\eps x,x]\times [0,t]\right)\right)\right]\\
&\approx x\eps\bar{\alpha} + 2\sqrt{x(1-\eps)\lambda t}-x(1-\eps)\lambda\\
& = 2\sqrt{x\lambda t} -x\lambda -\sqrt{xt\lambda}\delta(\eps),
\end{align*}
where $\delta(\eps)=2-\eps-2\sqrt{1-\eps}\leq \eps^2$ is positive and increasing. In order to make the above approximation rigorous we first write
\begin{equation}\label{eq:useful}
 2\sqrt{x\lambda t} -x\lambda -\tfrac{1}{2}\sqrt{xt\lambda}\delta(\eps)
 = x\eps\bar{\alpha} + \tfrac{1}{4}\sqrt{xt\lambda}\delta(\eps)
+  2\sqrt{x(1-\eps)\lambda t} -x(1-\eps)\lambda +\tfrac{1}{4}\sqrt{xt\lambda}\delta(\eps).
\end{equation}
Combining \eqref{eq:SourcesForcees} and \eqref{eq:useful} gives
$$
\P\left(L_{=<,\eps}^\star\ge  2\sqrt{x\lambda t} -x\lambda -\tfrac{1}{2}\sqrt{xt\lambda}\delta(\eps)\right)\notag\\
\leq \P_1+ \P_2
$$
where
\begin{align*}
\P_1&= \P\left(\Poi(x\eps\bar{\alpha})\ge x\eps\bar{\alpha} + \tfrac{1}{4}\sqrt{xt\lambda}\delta(\eps)\right),\\
\P_2&=\P\left( \mathcal{L}_{<} (\Pi^{(\lambda)}_{x,t}\cap \left([\eps x,x]\times [0,t]\right) \geq 2\sqrt{x(1-\eps)\lambda t} -x(1-\eps)\lambda +\tfrac{1}{4}\sqrt{xt\lambda}\delta(\eps)\right).
\end{align*}
Using the tail inequality for the Poisson distribution (Lemma \ref{lem:Chernov} (i)) we have that
$$
\P_1\leq \exp\left(-\frac{xt\lambda \delta(\eps)^2 }{16\times 4 \eps^2(\sqrt{xt\lambda}-x\lambda)}\right)\leq  \exp\left(-\sqrt{xt\lambda} \delta(\eps)^2 /64 \eps^2\right).
$$
Besides
\begin{align}
\P_2
 &\leq \P\left( \mathcal{L}_{<} (\Pi^{(\lambda)}_{x,t}\cap \left([\eps x,x]\times [0,t]\right) \geq \left(2\sqrt{x(1-\eps)\lambda t} -x(1-\eps)\lambda\right)\times (1+\tfrac{1}{8}\delta(\eps)) \right)\notag\\
&\leq \exp\left(-g(\delta(\eps)/8) (\sqrt{x(1-\eps)t\lambda}-x(1-\eps)\lambda) \right)  \text{(using the upper bound \eqref{eq:upper})}.\notag
\end{align}
Finally we can find  some positive  $h$ such that 
\begin{equation}\label{eq:Majo_Leps}
\P\left(L_{=<,\eps}^\star\ge  2\sqrt{x\lambda t} -x\lambda -\tfrac{1}{2}\sqrt{xt\lambda}\delta(\eps)\right)\leq \exp\left(-h(\eps) (\sqrt{xt\lambda}-x\lambda)\right).
\end{equation}
One proves exactly in the same way a  similar bound  for the length of a maximizing path among those using sinks in $\{0\}\times [0,\eps t]$ and then only increasing points of $\Pi^{(\lambda)}_{x,t}\cap \left([0,x]\times [\eps t,t]\right)$.


Choose now one of the maximizing paths $\mathcal{P}$ for $\mathcal{L}_{=<}\left( \Pi^{(\lambda)}_{x,t}\cup {\sf So}^{(\bar{\alpha})}_x\cup {\sf Si}^{(\bar{p})}_t\right)$ (if there are many of them, choose one arbitrarily in a deterministic way: the lowest, say). 
Denote by ${\sf sources}(\mathcal{P})$ and ${\sf sinks}(\mathcal{P})$ the number of sources and sinks in the path $\mathcal{P}$:
$$
{\sf sources}(\mathcal{P})=\mathrm{card}\set{0\leq y\leq x \text{ such that } (y,0)\in \mathcal{P} }.
$$
In Fig.\ref{fig:SourcesForcees} the path $\mathcal{P}$ is sketched in pink, in that example ${\sf sources}(\mathcal{P})=2$, ${\sf sinks}(\mathcal{P})=0$.

\begin{lem}\label{lem:PeuDeSources}
Let $t\geq x\lambda$. There exists a positive function $\psi$ such that for all real $\eta >0$
\begin{equation*}\label{eq:PeuDeSources}
\P\left({\sf sources}(\mathcal{P})+{\sf sinks}(\mathcal{P})\geq  \eta \sqrt{x\lambda t}\right) \leq 2\exp(-\psi(\eta)(\sqrt{x\lambda t}-x\lambda)).
\end{equation*}
\end{lem}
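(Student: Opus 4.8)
The plan is to use the two "forced source/sink" estimates \eqref{eq:Majo_Leps} (and its sink-analogue) as the main input, together with the upper bound in Lemma \ref{lem:lower_upper_star} for the full maximizing path. The intuition is: if the chosen maximizing path $\mathcal{P}$ of $\mathcal{L}_{=<}(\Pi^{(\lambda)}_{x,t}\cup {\sf So}^{(\bar{\alpha})}_x\cup {\sf Si}^{(\bar{p})}_t)$ used many sources, then (since sources all lie on the line $\{y=0\}$ and consecutive sources must have increasing $x$-coordinate) the rightmost source used has $x$-coordinate at least of order $\eta\sqrt{x\lambda t}/\bar\alpha$, and so $\mathcal{P}$ restricted to $[0,\text{that }x\text{-coordinate}]\times\{0\}$ plus the continuation inside $[\cdot\,,x]\times[0,t]$ is a path of the type counted by $L^\star_{=<,\eps}$ for a suitable $\eps=\eps(\eta)$. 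Its length is then $\mathcal{L}_{=<}(\Pi^{(\lambda)}_{x,t}\cup {\sf So}^{(\bar{\alpha})}_x\cup {\sf Si}^{(\bar{p})}_t)$, which by Lemma \ref{lem:lower_upper_star} is with high probability at least $(1-\eps')(2\sqrt{xt\lambda}-x\lambda)$, i.e.\ at least $2\sqrt{x\lambda t}-x\lambda-\tfrac12\sqrt{xt\lambda}\delta(\eps)$ once $\eps$ is chosen small enough relative to $\eta$; but \eqref{eq:Majo_Leps} says this large deviation for $L^\star_{=<,\eps}$ is itself exponentially unlikely. Combining the two gives the bound.

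First I would set up the dictionary between $\eta$ and $\eps$. Given $\eta$, note that on the event $\{{\sf sources}(\mathcal{P})\ge \tfrac{\eta}{2}\sqrt{x\lambda t}\}$ the number of sources in $\mathcal{P}$ lying in $[0,\eps x]\times\{0\}$ is $\mathrm{Binomial}$-type dominated by $\mathrm{card}({\sf So}^{(\bar\alpha)}_{\eps x})\sim\Poi(\eps x\bar\alpha)=\Poi(\eps(\sqrt{xt\lambda}-x\lambda))$, which is $\mathrm{o}(\sqrt{x\lambda t})$ once $\eps$ is small; hence for such a path the sources used outside $[0,\eps x]$ still number $\ge \tfrac{\eta}{4}\sqrt{x\lambda t}$ (say), so in particular $\mathcal{P}$ uses at least one source with $x$-coordinate $>\eps x$. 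Then the sub-path of $\mathcal{P}$ consisting of that far-right source followed by the portion of $\mathcal{P}$ inside $[\eps x,x]\times[0,t]$ is a legitimate competitor for $L^\star_{=<,\eps}$, and moreover the portion of $\mathcal{P}$ in $[0,\eps x]\times\{0\}$ contributes at most $\mathrm{card}({\sf So}^{(\bar\alpha)}_{\eps x})$ points; so
$$
\mathcal{L}_{=<}\!\left(\Pi^{(\lambda)}_{x,t}\cup {\sf So}^{(\bar\alpha)}_x\cup {\sf Si}^{(\bar p)}_t\right)\le \mathrm{card}\!\left({\sf So}^{(\bar\alpha)}_{\eps x}\right)+L^\star_{<,\eps}.
$$
Choosing $\eps=\eps(\eta)$ so that $\eps x\bar\alpha=\eps(\sqrt{xt\lambda}-x\lambda)\le \tfrac18\sqrt{xt\lambda}\delta(\eps)$ (possible since $\delta(\eps)/\eps\to 2$ as $\eps\to 0$, while the left side is $\le\eps\sqrt{xt\lambda}$), a Poisson tail bound (Lemma \ref{lem:Chernov}) gives $\P(\mathrm{card}({\sf So}^{(\bar\alpha)}_{\eps x})\ge \tfrac18\sqrt{xt\lambda}\delta(\eps))\le\exp(-c(\eps)\sqrt{xt\lambda})$, and then the left-hand side above being $\ge(1-\tfrac14\delta(\eps))(2\sqrt{xt\lambda}-x\lambda)$, which happens with probability $\ge 1-2\exp(-g_1(\tfrac14\delta(\eps))(\sqrt{xt\lambda}-x\lambda))$ by Lemma \ref{lem:lower_upper_star}, forces $L^\star_{<,\eps}\ge 2\sqrt{x\lambda t}-x\lambda-\tfrac12\sqrt{xt\lambda}\delta(\eps)$ on the intersection of all these good events; but \eqref{eq:Majo_Leps} bounds the probability of that. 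Union-bounding over the source-event and the symmetric sink-event (using the sink-version of \eqref{eq:Majo_Leps}), and absorbing $x\lambda\le\sqrt{xt\lambda}$ (since $t\ge x\lambda$), we obtain $\P({\sf sources}(\mathcal{P})+{\sf sinks}(\mathcal{P})\ge\eta\sqrt{x\lambda t})\le 2\exp(-\psi(\eta)(\sqrt{x\lambda t}-x\lambda))$ for a suitable positive $\psi$ built from $g_1,h,c$ evaluated at $\delta(\eps(\eta))$.

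The main obstacle I anticipate is the bookkeeping that turns "$\mathcal{P}$ uses $\ge\eta\sqrt{x\lambda t}$ sources" into "$L^\star_{<,\eps}$ is atypically large": one must be careful that the far-right portion of $\mathcal{P}$ is genuinely of the form counted by $L^\star_{<,\eps}$ (it starts at a single source with $x$-coordinate $\ge\eps x$ and then uses only strictly increasing points of $\Pi^{(\lambda)}$ in $[\eps x,x]\times[0,t]$ — this is fine because after the first source the path never revisits the line $y=0$), and that dropping the at most $\mathrm{card}({\sf So}^{(\bar\alpha)}_{\eps x})$ sources in $[0,\eps x]$ costs only that additive term, which the Poisson tail controls. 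A secondary point is choosing $\eps(\eta)$ and then the deviation fractions ($\tfrac14\delta(\eps)$, $\tfrac18\delta(\eps)$, etc.) consistently so that all four tail estimates (Lemma \ref{lem:lower_upper_star} twice — for sources and sinks — Lemma \ref{lem:Chernov}, and \eqref{eq:Majo_Leps} plus its sink-analogue) have exponents proportional to $\sqrt{xt\lambda}-x\lambda$; the $x\lambda$ subtraction is harmless since $\sqrt{xt\lambda}\ge x\lambda$ gives $\sqrt{xt\lambda}\le 2(\sqrt{xt\lambda}-x\lambda)+\text{l.o.t.}$ only when $\sqrt{xt\lambda}-x\lambda$ is comparable to $\sqrt{xt\lambda}$, so one should instead simply state all bounds directly in terms of $\sqrt{xt\lambda}-x\lambda$ using $t\ge x\lambda$ as in Lemma \ref{lem:lower_upper_star}.
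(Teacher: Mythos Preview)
Your argument has a genuine gap at the displayed inequality
\[
\mathcal{L}_{=<}\!\left(\Pi^{(\lambda)}_{x,t}\cup {\sf So}^{(\bar\alpha)}_x\cup {\sf Si}^{(\bar p)}_t\right)\le \mathrm{card}\!\left({\sf So}^{(\bar\alpha)}_{\eps x}\right)+L^\star_{<,\eps},
\]
which does not hold for a \emph{fixed} $\eps=\eps(\eta)$. Recall that $L^\star_{=<,\eps}=\mathrm{card}({\sf So}^{(\bar\alpha)}_{\eps x})+\mathcal{L}_<(\Pi^{(\lambda)}\cap[\eps x,x]\times[0,t])$: the first summand counts only sources in $[0,\eps x]$, and the second counts only increasing paths through $\Pi$-points (no sources at all) in the strip $[\eps x,x]$. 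If $\mathcal{P}$ uses sources with $x$-coordinate in $(\eps x,x]$ --- and your own argument forces this, since you show the rightmost source lies beyond $\eps x$ --- those sources are counted by \emph{neither} term on the right. So the ``sub-path of $\mathcal{P}$ inside $[\eps x,x]\times[0,t]$'' is not a competitor for $L^\star_{=<,\eps}$ in any sense: it contains sources in $(\eps x,x]\times\{0\}$, which $L^\star_{=<,\eps}$ forbids. Concretely, if $\mathcal{P}$ used many sources spread across $(\eps x,x]$ and then a short $\Pi$-path, the left side could exceed the right by an amount of order $\sqrt{xt\lambda}$.

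The paper avoids this by letting $\eps$ be \emph{random}, equal to the normalised abscissa of the rightmost source actually used by $\mathcal{P}$; for that particular $\eps$ one has the exact identity $|\mathcal{P}|=L^\star_{=<,\eps}$. The price is that one must then control $\mathbb{P}\bigl(L^\star_{=<,\eps}\geq 2\sqrt{x\lambda t}-x\lambda-\tfrac12\delta(\eta/3)\sqrt{x\lambda t}\text{ for some }\eps\in[\eta/2,1]\bigr)$, and this supremum over a continuum is handled by discretising $[\eta/2,1]$ into $K\sim \eta^{-3}$ pieces, using the almost-monotonicity $L^\star_{=<,\eps}\leq L^\star_{=<,k/K}+\mathrm{card}({\sf So}^{(\bar\alpha)}_x\cap[\tfrac{k}{K},\tfrac{k+1}{K}])$, and union-bounding via \eqref{eq:Majo_Leps} at the grid points. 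Your single-$\eps$ shortcut cannot replace this step.
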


\begin{proof}[Proof of Lemma \ref{lem:PeuDeSources}]
(As the left-hand side is non-increasing in $\eta$ it is enough to prove the lemma for $\eta<1$.)\\

If the event $\left\{{\sf sources}(\mathcal{P})\geq \eta \sqrt{x\lambda t}\right\}$ holds then there exists a (random) $\eps$ such that the two following events occur:
\begin{itemize}
\item ${\sf So}_{\eps x}\geq \eta \sqrt{x\lambda t}$ ;
\item $L_{=<,\eps}^\star =\mathcal{L}_{=<}\left( \Pi^{(\lambda)}_{x,t}\cup {\sf So}^{(\bar{\alpha})}_x\cup {\sf Si}^{(\bar{p})}_t\right) =L^{(\bar{\alpha},\bar{p})}_{< }(t)+\mathrm{card}({\sf Si}^{(\bar{p})}_t)$.
\end{itemize}
This implies that this random $\eps$ is larger than  $\eta/2>0$ unless the number of sources in  $[0,x\eta/2]$ is improbably high:
$$
\begin{array}{r c l l}
\mathbb{P}({\sf sources}(\mathcal{P})\geq \eta \sqrt{x\lambda t})
&\leq &\ \mathbb{P}({\sf So}_{\eta x/2}\geq  \eta \sqrt{x\lambda t}) +  \mathbb{P}({\sf sources}(\mathcal{P})\geq \eta \sqrt{x\lambda t};\ {\sf So}_{\eta x/2}<  \eta \sqrt{x\lambda t}) & \\
&\leq &\ \mathbb{P}({\sf So}_{\eta x/2}\geq  \eta \sqrt{x\lambda t}) & =:\P_3\\ 
&+ &\ \mathbb{P}(L^{(\bar{\alpha},\bar{p})}_{<}(t) \leq \sqrt{x\lambda t} -x\lambda - \tfrac{1}{4}\delta(\eta/3)\sqrt{x\lambda t}) &  =:\P_4\\
&+ &\ \mathbb{P}(\mathrm{card}({\sf Si}^{(\bar{p})}_t)\leq  \sqrt{x\lambda t} - \tfrac{1}{4}\delta(\eta/3)\sqrt{x\lambda t})&  =:\P_5\\
&+ &\ \mathbb{P}(L_{=<,\eps}^\star \geq 2\sqrt{x\lambda t} -x\lambda -\tfrac{1}{2}\delta(\eta/3)\sqrt{x\lambda t}\text{ for some }\eta/2\leq \eps \leq 1 )&  =:\P_6.
\end{array}
$$
From previous calculations, the three first terms in the above display are less than $\exp(-\phi(\eta)(\sqrt{x\lambda t}-x\lambda))$ for some positive function $\phi$. To see why:
\begin{itemize}
\item We bound $\P_3$ with Lemma \ref{lem:Chernov} (i) again. Here we need to justify that the condition "$A\leq 3\lambda$" in  
 Lemma \ref{lem:Chernov} (i) is satisfied. The quantity ${\sf So}_{\eta x/2}$ is a Poisson random variable with mean $\bar{\alpha}\frac{\eta}{2}x$ so we have to check that $\eta \sqrt{x\lambda t}-\bar{\alpha}\frac{\eta}{2}x\leq 3\bar{\alpha}\frac{\eta}{2}x$. Recalling $\bar{\alpha}=\sqrt{t\lambda/x}-\lambda$ and a bit of algebra shows that this is equivalent to $t\geq x\lambda$.
\item The term $\P_4$ is bounded thanks to Lemma \ref{lem:lower_upper_star} (recall also \eqref{prop:NombreLignes_bis});
\item We bound $\P_5$ with Lemma \ref{lem:ChernovB} (recall that ${\sf Si}^{(\bar{p})}_t$ is a Binomial).
\end{itemize}
To conclude the proof it remains to bound $\P_6$. Let $K$ be an integer larger than $144/\eta^3$, by definition of $L_{=<,\eps}^\star$ we have for every $1\leq k\leq \lceil xK \rceil$ and every $\eps \in[\tfrac{k}{K},\tfrac{k+1}{K})$ 
$$
L_{=<,\eps}^\star \leq L_{=<,k/K}^\star + \mathrm{card}({\sf So}^{(\bar{\alpha})}_x \cap [\tfrac{k}{K},\tfrac{k+1}{K}]).
$$
Thus
\begin{align}
\mathbb{P}\bigg(\bigcup_{\eta/2\leq \eps \leq 1} & \left\{L_{=<,\eps}^\star > 2\sqrt{x\lambda t} -x\lambda -\tfrac{1}{2}\delta(\eta/3)\sqrt{x\lambda t}\right\} \bigg)\notag\\
\leq &\sum_{k\geq \lfloor \eta K/2\rfloor } \mathbb{P}\left(L_{=<,k/K}^\star > 2\sqrt{x\lambda t} -x\lambda -\delta(\eta/3)\sqrt{x\lambda t} \right)\notag\\
+ &\sum_{k\geq \lfloor \eta K/2\rfloor } \mathbb{P}\left(\mathrm{card}({\sf So}^{(\bar{\alpha})}_x \cap [\tfrac{k}{K},\tfrac{k+1}{K}]) > \tfrac{1}{2}\delta(\eta/3)\sqrt{x\lambda t}\right)\notag\\
\leq &\sum_{k\geq \lfloor \eta K/2\rfloor } \mathbb{P}\left(L_{=<,k/K}^\star > 2\sqrt{x\lambda t} -x\lambda -\delta(k/K)\sqrt{x\lambda t} \right) \quad\text{ (since $K>\frac{144}{\eta^3}>\frac{6}{\eta}$ and $\delta$ is increasing)}\notag\\
+ &\sum_{k\geq \lfloor \eta K/2\rfloor } \mathbb{P}\left(\mathrm{card}({\sf So}^{(\bar{\alpha})}_x \cap [\tfrac{k}{K},\tfrac{k+1}{K}]) > \tfrac{1}{2}\delta(\eta/3)\sqrt{x\lambda t}\right)\notag\\
\leq &\sum_{k\geq \lfloor  \eta K/2\rfloor } \exp(-h(k/K) (\sqrt{xt\lambda}-x\lambda))\qquad\text{ (using \eqref{eq:Majo_Leps})}\notag\\
+ &K \times \mathbb{P}\left(\mathrm{Poisson}(\bar{\alpha}/K)>\tfrac{1}{2}\delta(\eta/3)\sqrt{x\lambda t}\right)\notag\\
\leq &\ K \exp(-h(\eta/3) (\sqrt{xt\lambda}-x\lambda))  \label{eq:LastLineBigDisplay}\\
+  & K \times \mathbb{P}\left(\mathrm{Poisson}(\bar{\alpha}/K)> \tfrac{1}{2}\delta(\eta/3)\sqrt{x\lambda t}\right).\notag
\end{align}
We finally bound the last display. First recall from our notation that
$$
\bar{\alpha}<\sqrt{t\lambda/x},\qquad x\geq 1,\qquad \delta(\eps)=2-\eps-2\sqrt{1-\eps}\geq \eps^2/4.
$$
Then:
\begin{align}
 \mathbb{P}\left(\mathrm{Poisson}(\bar{\alpha}/K)> \tfrac{1}{2}\delta(\eta/3)\sqrt{x\lambda t}\right)
 &=\mathbb{P}\left(\mathrm{Poisson}(\bar{\alpha}/K)> \bar{\alpha}/K-\bar{\alpha}/K+\tfrac{1}{2}\delta(\eta/3)\sqrt{x\lambda t}\right)\notag\\ 
&\leq \mathbb{P}\left(\mathrm{Poisson}(\bar{\alpha}/K)> \bar{\alpha}/K+\sqrt{x\lambda t}\left( -\frac{1}{xK}+\frac{1}{2}\delta(\eta/3)\right) \right)\notag\\
&\leq \mathbb{P}\left(\mathrm{Poisson}(\bar{\alpha}/K)> \bar{\alpha}/K+\sqrt{x\lambda t}\left( -\frac{\eta^3}{144}+\frac{\eta^2}{72}\right) \right).\label{eq:Fin144}
\end{align}
We can find a positive function $\varphi$ such that \eqref{eq:LastLineBigDisplay} and \eqref{eq:Fin144} are both less than $\frac{144}{\eta} e^{-\varphi(\eta) (\sqrt{xt\lambda}-x\lambda)}$.
We then choose a positive function $\psi$ such that 
$$
 \min\left\{1,\frac{288}{\eta} e^{-\varphi(\eta) (\sqrt{xt\lambda}-x\lambda)}+ 3e^{-\phi(\eta)(\sqrt{x\lambda t}-x\lambda)}\right\}\leq 2e^{-\psi(\eta) (\sqrt{xt\lambda}-x\lambda)}
$$
and thus $\mathbb{P}({\sf sources}(\mathcal{P})\geq \eta \sqrt{x\lambda t})\leq \exp(-\psi(\eta) (\sqrt{xt\lambda}-x\lambda))$.
With minor modifications one proves the same bound for sinks (possibly by changing $\psi$): 
$ \mathbb{P}({\sf sinks}(\mathcal{P})\geq \eta \sqrt{x\lambda t})\leq \exp(-\psi(\eta) (\sqrt{xt\lambda}-x\lambda))$ and Lemma \ref{lem:PeuDeSources} is proved.
\end{proof}

We can conclude the proof of  the lower bound in Theorem \ref{Th:BorneInf_L}. Let us write
 $$
 L_<(t)\geq \mathcal{L}_{=<}(\Pi^{(\lambda)}_{x,t}\cup {\sf So}^{(\bar{\alpha})}_x\cup {\sf Si}^{(\bar{p})}_t)-{\sf sources}(\mathcal{P})-{\sf sinks}(\mathcal{P}),
 $$
we bound the right-hand side using Lemmas \ref{lem:lower_upper_star} and \ref{lem:PeuDeSources}.
\end{proof}

\section{Proof of Theorem \ref{Th:Strict} when $k_n\to +\infty$: de-Poissonization}\label{Sec:Dep}

In order to conclude the proof of Theorem \ref{Th:Strict} it remains to de-Poissonize Theorem \ref{Th:BorneInf_L}.
We need a few notation. For any integers $i_1,\dots,i_n$ let $\mathcal{S}_{i_1,\dots,i_n}$ be the random set of points given by $i_\ell$ uniform points on each horizontal line:
$$
\mathcal{S}_{i_1,\dots,i_n}=
\cup_{\ell=1}^n \cup_{r=1}^{i_\ell} \set{U_{\ell,r}}\times\set {\ell},
$$
where $(U_{\ell,r})_{\ell,r}$ is an array of i.i.d. uniform random variables in $[0,1]$. Set also
$
e_{i_1,\dots,i_n} =\mathbb{E}[\mathcal{L}_{<}(\mathcal{S}_{i_1,\dots,i_n})].
$
By uniformity of $U$'s we have the identity
$
\mathbb{E}[\mathcal{L}_{<}(S_{k;n})]=e_{k,\dots,k}
$
and therefore our problem reduces to estimating $e_{k,\dots,k}$. On the other hand if $X_1,\dots,X_n$ are i.i.d. Poisson random variables with mean $k$ then
\begin{align}
\mathbb{E}[e_{X_1,\dots,X_n}]=
\mathbb{E}\left[\mathcal{L}_{<}(\Pi_{nk_n,n}^{(1/n)})\right]
= 2\sqrt{nk_n}-k_n + o(\sqrt{nk_n}).
\label{eq:transformee_poisson}
\end{align}
The last equality is obtained by combining Theorem \ref{Th:BorneInf_L} for
$$
x=nk_n,\qquad t=n,\qquad \lambda_n = \frac{1}{n}
$$
with the trivial bound $\mathcal{L}_{<}(\Pi_{nk_n,n}^{(1/n)})\leq n$. In order to exploit \eqref{eq:transformee_poisson} we need the following smoothness estimate.
\begin{lem}
For every $i_1,\dots,i_n$ and $j_1,\dots,j_n$
$$
\left| e_{i_1,\dots,i_n} - e_{j_1,\dots,j_n} \right| \leq 6\sqrt{\sum_{\ell=1}^n |i_\ell-j_\ell |}.
$$
\end{lem}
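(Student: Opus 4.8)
The plan is to bound the effect of changing the multiset of points one line at a time, and then to control a single-line change. First I would reduce to the case where the two configurations differ on a single horizontal line: writing $\Delta = \sum_\ell |i_\ell - j_\ell|$, I would interpolate between $(i_1,\dots,i_n)$ and $(j_1,\dots,j_n)$ through a sequence of configurations each obtained from the previous one by adding or removing a single point on a single line. Since $\mathcal{L}_{<}$ changes by at most $1$ when one point is added or removed (a longest increasing path uses at most one point per line, so deleting one point can destroy at most... actually more carefully: adding a point can increase $\mathcal{L}_<$ by at most $1$, and deleting a point can decrease it by at most $1$), one gets the crude bound $|e_{i_1,\dots,i_n} - e_{j_1,\dots,j_n}| \le \Delta$. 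This is too weak; the point of the $\sqrt{\Delta}$ bound is that most single-point changes have no effect at all on the \emph{expected} length, and I would exploit this by comparing $e$ at two configurations that differ by many points \emph{on a single line} at once.

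So the key step is: if $(i_1,\dots,i_n)$ and $(j_1,\dots,j_n)$ agree except on line $\ell$, where $i_\ell \le j_\ell$, then $0 \le e_{j_1,\dots,j_n} - e_{i_1,\dots,i_n} \le C\sqrt{j_\ell - i_\ell}$ for a suitable constant. The lower bound is monotonicity (adding points can only help). For the upper bound, I would couple the two configurations so that $\mathcal{S}_{i_1,\dots,i_n} \subset \mathcal{S}_{j_1,\dots,j_n}$, the larger one having $m := j_\ell - i_\ell$ extra i.i.d. uniform points on line $\ell$. A longest increasing path in the larger configuration uses at most one point of line $\ell$; if it uses none of the $m$ extra points, it is already a path in the smaller configuration. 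Conditioning on everything except the positions of the $m$ extra points, the extra gain is nonzero only when one of the $m$ extra uniforms falls into a certain (random) sub-interval of $[0,1]$ — roughly, an interval determined by where an optimal path for $\mathcal{S}_{i_1,\dots,i_n}$ "wants" to pass through line $\ell$ — and even then the gain is at most $1$. The subtlety is that the relevant interval is itself random and can be large, so a naive bound gives only $\le 1$ again. The trick (this is the standard trick, cf.\ the argument that $\E[\mathcal{L}_<]$ is $\tfrac12$-Hölder in $\sqrt{n}$ for Poisson points) is to instead compare across a \emph{block} of lines or to use the concavity/subadditivity: the function $m \mapsto e_{\dots,i_\ell+m,\dots}$ is nondecreasing and concave (each added point has diminishing returns because the earlier added points already "use up" the best positions), and its total variation as $m$ ranges over all of $\{0,1,2,\dots\}$ is controlled — in fact $e_{\dots,i_\ell+m,\dots} - e_{\dots,i_\ell,\dots}\le \E[\mathcal{L}_<(\text{one full line of }m\text{ points among the rest})]$, and one shows that adding one extra line contributes $O(1)$ on average when there are already $\Theta(n)$ lines... hmm, that again is not quite $\sqrt{m}$.

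Let me restructure: the cleanest route is to prove the single-line estimate $e_{\dots,i_\ell + m,\dots} - e_{\dots,i_\ell,\dots} \le \sqrt{m} + O(1)$ directly via a second coupling. Put the $i_\ell + m$ points on line $\ell$ in the larger configuration and the $i_\ell$ points in the smaller one, coupled so the $i_\ell$ are a uniform subset; now further discard points so that only $\lceil\sqrt{m}\rceil$ "active slots" can be improved — precisely, group the $i_\ell + m$ positions into $\lceil \sqrt m\rceil$ groups and observe that any increasing path through line $\ell$ can be rerouted through the smaller configuration at the cost of crossing at most $\lceil\sqrt m\rceil$ groups, losing at most $\lceil\sqrt m\rceil$ from the path length while the probability of losing anything in a given group is $\le$ (expected number of path-relevant points per group) — and these expectations sum appropriately by a union/averaging argument à la \cite{VersikKerov}. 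Iterating the single-line bound over the at most $n$ lines and using $\sum_\ell \sqrt{|i_\ell - j_\ell|} \ge \sqrt{\sum_\ell |i_\ell - j_\ell|}$ would give the $6\sqrt{\Delta}$ bound after tracking constants (the constant $6$ suggests a bound like $\sqrt{m}+2$ per line combined with subadditivity of $\sqrt{\cdot}$ plus slack). The main obstacle I anticipate is exactly this single-line Hölder estimate: getting the $\sqrt{m}$ rather than a trivial $m$ or a $\log$-lossy bound requires the right coupling together with the observation that an optimal increasing path is "thin" (meets each line once), so that a random perturbation of one line's points changes the answer only with small probability, and quantifying this probability by the density of path-relevant positions. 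I would isolate this as a sub-lemma and prove it by the interval/averaging argument sketched above, being careful that the randomness of which interval matters is handled by taking expectations before estimating.
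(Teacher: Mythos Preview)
Your approach has a genuine gap, and the paper's proof is considerably simpler than what you attempt.

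The fatal issue is in your final step. Even granting a per-line bound of the form
\[
0 \le e_{\dots,i_\ell+m,\dots} - e_{\dots,i_\ell,\dots} \le C\sqrt{m},
\]
summing over lines yields $|e_i - e_j| \le C\sum_\ell \sqrt{|i_\ell - j_\ell|}$. You then invoke $\sum_\ell \sqrt{|i_\ell - j_\ell|} \ge \sqrt{\sum_\ell |i_\ell - j_\ell|}$, but this inequality points the \emph{wrong way}: knowing an upper bound by the larger quantity does not give an upper bound by the smaller one. Concretely, if $|i_\ell - j_\ell|=1$ for every $\ell$, your bound is of order $n$, not $\sqrt{n}$. (Incidentally, since an increasing path meets each horizontal line at most once, the per-line bound is actually $\le 1$ outright, not merely $\le C\sqrt{m}$; but summing $1$'s over $n$ lines still gives $n$, so the line-by-line decomposition cannot reach $\sqrt{\Delta}$ in principle.)

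The paper avoids this entirely with two clean observations. First, if $\mathcal{S}$ and $\widetilde{\mathcal{S}}$ are independent point configurations then
\[
\mathcal{L}_<(\mathcal{S}) \le \mathcal{L}_<(\mathcal{S}\cup\widetilde{\mathcal{S}}) \le \mathcal{L}_<(\mathcal{S}) + \mathcal{L}_<(\widetilde{\mathcal{S}}),
\]
which in expectation gives monotonicity and subadditivity of $e$ under coordinatewise addition. Writing $i_\ell = (i_\ell - j_\ell)^+ + \big(j_\ell - (i_\ell - j_\ell)^-\big)$ then yields
\[
|e_{i_1,\dots,i_n} - e_{j_1,\dots,j_n}| \le e_{|i_1-j_1|,\dots,|i_n-j_n|}.
\]
Second, $e_{m_1,\dots,m_n} \le 6\sqrt{m_1+\dots+m_n}$ for \emph{any} tuple, proved by replacing each height $\ell$ by an independent uniform in $(\ell,\ell+1)$: this turns $\mathcal{S}_{m_1,\dots,m_n}$ into a uniform permutation of size $\sum m_\ell$, can only increase $\mathcal{L}_<$, and one quotes the classical bound $\mathbb{E}[\mathcal{L}_<(\sigma_N)]\le 6\sqrt{N}$. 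Applying this with $m_\ell = |i_\ell - j_\ell|$ finishes the proof in one line. The point is that the ``difference'' configuration is handled \emph{globally}, not line by line, which is exactly what produces the square root of the \emph{total} discrepancy.
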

\begin{proof}
Let $\mathcal{S}=\mathcal{S}_{i_1,\dots,i_n}$ be as above. If we replace in $\mathcal{S}$ the $y$-coordinate of each point of the form $(x,\ell)$ by a new $y$-coordinate uniform in the interval  $(\ell,\ell+1)$ (independent from anything else) then this defines a uniform permutation $\sigma_{i_1+\dots +i_n}$ of size $i_1+\dots +i_n$. The longest increasing subsequence in $\mathcal{S}$ is mapped onto an increasing subsequence in $\sigma_{i_1+\dots +i_n}$ and thus this construction shows  the stochastic domination
$
\mathcal{L}_{<}(\mathcal{S}_{i_1,\dots,i_n}) \preccurlyeq \mathcal{L}_{<}(\sigma_{i_1+\dots +i_n}).
$
Thus for every $i_1,\dots,i_n$,
\begin{equation}\label{eq:CrudeBound}
e_{i_1,\dots,i_n}\leq \mathbb{E}[\mathcal{L}_{<}(\sigma_{i_1+\dots +i_n})] \leq 6\sqrt{i_1+\dots +i_n}.
\end{equation}
(The second inequality follows for example from \cite[Lemma 1.4.1]{Steele}.)
Besides, consider for two $n$-tuples $i_1,\dots,i_n$ and $j_1,\dots,j_n$ two independent sets of points $\mathcal{S}_{i_1,\dots,i_n}$, $ \widetilde{\mathcal{S}}_{j_1,\dots,j_n}$ then 
$$
\mathcal{L}_{<}(\mathcal{S}_{i_1,\dots,i_n})\leq \mathcal{L}_{<}(\mathcal{S}_{i_1,\dots,i_n}\cup \widetilde{\mathcal{S}}_{j_1,\dots,j_n})
\leq \mathcal{L}_{<}(\mathcal{S}_{i_1,\dots,i_n}) +\mathcal{L}_{<}( \widetilde{\mathcal{S}}_{j_1,\dots,j_n}).
$$
This proves that
$$
e_{i_1,\dots,i_n}\leq e_{i_1+j_1,\dots,i_n+j_n} \leq  e_{i_1,\dots,i_n}+e_{j_1,\dots,j_n}.
$$
(In particular $(i_1,\dots,i_n)\mapsto e_{i_1,\dots,i_n}$ is non-decreasing with respect to any of its coordinate.) Therefore
\begin{align*}
e_{i_1,\dots,i_n}
&\leq e_{(i_1-j_1)^+,\dots,(i_n-j_n)^+} +  e_{j_1-(i_1-j_1)^-,\dots,j_n-(i_n-j_n)^-}\\
&\leq e_{|i_1-j_1|,\dots,|i_n-j_n|}  + e_{j_1,\dots,j_n}.
\end{align*}

By switching the role of $i$'s and $j$'s:
$$
|e_{i_1,\dots,i_n}-e_{j_1,\dots,j_n}|  \leq e_{|i_1-j_1|,\dots,|i_n-j_n|}
\leq 6\sqrt{\sum_{\ell=1}^n |i_\ell-j_\ell |},
$$
using  \eqref{eq:CrudeBound}.
\end{proof}

\begin{proof}[Proof of Theorem \ref{Th:Strict} for any sequence $(k_n)\to +\infty$]
Using smoothness we write
\begin{align}
|e_{k,\dots,k} - \mathbb{E}[e_{X_1,\dots,X_n}]|
\leq \mathbb{E}\left[|e_{k,\dots,k} - e_{X_1,\dots,X_n}|\right]
\leq  6\times \mathbb{E}\left[ \left(\sum_{\ell=1}^n | X_\ell-k |\right)^{1/2}\right].\label{eq:depoisson}
\end{align}
Using twice the Cauchy-Schwarz inequality:
\begin{align*}
\mathbb{E}\left[ \left(\sum_{\ell=1}^n | X_\ell-k |\right)^{1/2}\right]
&\leq \sqrt{\mathbb{E}\left[ \sum_{\ell=1}^n | X_\ell-k |\right]}\\
&\leq \sqrt{n \mathbb{E}\left[ | X_1-k |\right]}\\
&\leq \sqrt{n \mathbb{E}\left[ | X_1-k |^{2}\right]^{1/2}}
= \sqrt{n \sqrt{\mathrm{Var}(X_1)}}
= \sqrt{n \sqrt{k}}.
\end{align*}
If  $k=k_n\to \infty$ then the last display is a $o(\sqrt{nk_n})$ and  eq.\eqref{eq:depoisson} and \eqref{eq:transformee_poisson} show that
$$
e_{k,\dots,k}=\mathbb{E}[\mathcal{L}_{<}(S_{k;n})]=2\sqrt{nk_n}-k_n + o(\sqrt{nk_n}).
$$\qedhere
\end{proof}

\section{Proof of Theorem \ref{Th:Large}}\label{Sec:Dep2}

\subsection{Proof for large $(k_n)$}\label{Sec:DepoissonizationLarge}

We now prove Theorem \ref{Th:Large} for a \emph{large} sequence  $(k_n)$.
We say that $(k_n)$ is \emph{large} if 
\begin{equation}\label{eq:condition_large}
n^2k_n\exp(-(k_n)^{\alpha})=\mathrm{o}(\sqrt{nk_n})
\end{equation}
for some $\alpha \in(0,1) $. Recall that  $k_n=\log n$ is not large while $k_n=(\log n)^{1+\eps}$ is large.

We first observe that de-Poissonization cannot be applied as in the previous section. We lack smoothness as, for instance, $\mathbb{E}[\mathcal{L}_{\leq }(\mathcal{S}_{i_1,0,0,\dots,0})]=i_1\neq \mathcal{O}(\sqrt{\sum i_\ell})$.
The strategy is to apply Theorem \ref{Th:BorneInf_L} with
$$
x=nk_n,\qquad t=n,\qquad \lambda_n \approx \frac{1}{n}.
$$
(The exact value of $\lambda_n$  will be different for the proofs of the lower and upper bounds.)

\noindent{\bf Proof of the upper bound of \eqref{eq:Theta_leq} for large $(k_n)$.}

Choose $\alpha$ such that $n^2k_n\exp(-k_n^{\alpha})= \mathrm{o}(\sqrt{nk_n})$.
Put
$$
\lambda_n=\frac{1}{n}+\frac{\delta_n}{n},\qquad \text{ with } \delta_n=k_n^{-(1-\alpha)/2}.
$$
Let $E_n^{\lambda_n}$ be the event
$$
E_n^{\lambda_n}=\left\{ \text{ there are at least $k_n$ points in each row of  }\Pi_{nk_n,n}^{(\lambda_n)} \right\}.
$$
The event $E_n$ occurs with large probability. Indeed,
\begin{align}
1-\mathbb{P}(E_n^{\lambda_n})&\leq n \mathbb{P}\left( \Poi(nk_n \lambda_n) \leq k_n\right)\notag \\
&\leq n \mathbb{P}\left( \Poi(nk_n   \lambda_n) \leq nk_n   \lambda_n +k_n- nk_n   \lambda_n\right)\notag\\
&\leq n \mathbb{P}\left( \Poi(nk_n  \lambda_n) \leq nk_n   \lambda_n -k_n\delta_n\right)\notag\\
&\leq n\exp\left(-\frac{k_n^2\delta_n^2}{4nk_n\lambda_n}\right) \leq n\exp\left(-\tfrac{1}{8}k_n\delta_n^2\right)=  n\exp\left(-\tfrac{1}{8}k_n^{\alpha}\right). \label{eq:E}
\end{align}
At the last line we used Lemma  \ref{lem:Chernov}.
The latter probability tends to $0$ as $(k_n)$ is large.
\begin{lem}\label{lem:Couplage}
Random sets $S_{k_n;n}$ and $\Pi_{nk_n,n}^{(\lambda_n)} $ can be defined on the same probability space in such a way that 
\begin{align}
\mathcal{L}_{\leq }(S_{k_n;n})&\leq \mathcal{L}_{\leq}(\Pi_{nk_n,n}^{(\lambda_n)} )+ nk_n (1-\mathbf{1}_{E_n^{\lambda_n}}).\label{eq:Couplage_leq}
\end{align}
\end{lem}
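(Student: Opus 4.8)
The plan is to realise $S_{k_n;n}$ as a random \emph{subset} of $\Pi^{(\lambda_n)}_{nk_n,n}$ on the favourable event $E_n^{\lambda_n}$, and to use the trivial bound $\mathcal{L}_{\leq}(S_{k_n;n})\le nk_n$ off that event. First I recall that, as far as $\mathcal{L}_{\leq}$ is concerned, $S_{k_n;n}$ has the same law as the point set $\mathcal{S}_{k_n,\dots,k_n}$ of Section~\ref{Sec:Dep} (ordering points by their $x$-coordinate turns $k_n$ i.i.d.\ uniform points on each of $n$ independent lines into a uniform word with $k_n$ copies of each letter); and since $\mathcal{L}_{\leq}$ is invariant under any increasing bijection of the horizontal axis, I may place this continuous copy on $[0,nk_n]\times\{1,\dots,n\}$, i.e.\ $k_n$ i.i.d.\ uniform points on $[0,nk_n]$ per line, lines independent.

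Now sample $\Pi^{(\lambda_n)}_{nk_n,n}$ and let $N_\ell$ be the number of its points on line $\ell$; the $N_\ell$ are independent $\Poi(nk_n\lambda_n)$ and, conditionally on $N_\ell=m$, the $m$ points of line $\ell$ are i.i.d.\ uniform on $[0,nk_n]$. Using extra independent randomness, on each line $\ell$ I retain $k_n$ points as follows: if $N_\ell\ge k_n$, keep a \emph{uniformly chosen} $k_n$-element subset of the $N_\ell$ points already present; if $N_\ell<k_n$, draw $k_n$ fresh i.i.d.\ uniform points on line $\ell$. Let $S$ be the union over $\ell$ of these $k_n$-point sets. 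The key observation is that in both cases the $k_n$ retained points on line $\ell$ are i.i.d.\ uniform on $[0,nk_n]$ with a law not depending on $N_\ell$: a uniformly random $k_n$-subset of $m\ge k_n$ i.i.d.\ uniform points is again $k_n$ i.i.d.\ uniform points, and on $\{N_\ell<k_n\}$ this holds by construction; averaging over $N_\ell$ gives the claim. As the lines are handled independently, $S\overset{(d)}{=}\mathcal{S}_{k_n,\dots,k_n}$, hence $\mathcal{L}_{\leq}(S)\overset{(d)}{=}\mathcal{L}_{\leq}(S_{k_n;n})$, and I take this $S$ as the coupled copy of $S_{k_n;n}$.

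Finally I read off the inequality. On $E_n^{\lambda_n}$ every line satisfies $N_\ell\ge k_n$, so by construction $S\subseteq \Pi^{(\lambda_n)}_{nk_n,n}$ as point sets, whence $\mathcal{L}_{\leq}(S)\le \mathcal{L}_{\leq}(\Pi^{(\lambda_n)}_{nk_n,n})$ by monotonicity of $\mathcal{L}_{\leq}$ under inclusion. On the complement, $\mathcal{L}_{\leq}(S)\le nk_n$ since $S$ has $nk_n$ points. Combining the two cases yields $\mathcal{L}_{\leq}(S)\le \mathcal{L}_{\leq}(\Pi^{(\lambda_n)}_{nk_n,n})+nk_n(1-\mathbf{1}_{E_n^{\lambda_n}})$, which is \eqref{eq:Couplage_leq}.

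There is essentially one point that must be handled with care rather than a genuine obstacle: the retained points have to form a \emph{uniformly random} $k_n$-subset of the points on each line, not, say, the $k_n$ leftmost ones — order statistics would not be i.i.d.\ uniform and the marginal law of $S$ would be wrong. Once that is in place, and given the scale-invariance remark and the word/point-set identification already used in Section~\ref{Sec:Dep}, the rest is bookkeeping.
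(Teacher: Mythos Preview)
Your argument is correct and follows the same thinning strategy as the paper: realise the multiset permutation inside $\Pi^{(\lambda_n)}_{nk_n,n}$ on the good event $E_n^{\lambda_n}$, and fall back on the crude bound $nk_n$ otherwise. The only substantive difference is the selection rule on each row. The paper keeps the $k_n$ \emph{leftmost} points and asserts that, on $E_n^{\lambda_n}$, the induced relative order is a uniform $k_n$-multiset permutation; you instead keep a uniformly random $k_n$-subset (using extra randomness) and explicitly warn that the leftmost rule would spoil the marginal.

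Your caution is justified: the paper's assertion, taken literally, is not correct. Selecting by position makes the law of the retained points on a row depend on the row count $N_\ell$, and this biases the induced word even after averaging over the i.i.d.\ Poisson counts. For instance with $n=k_n=2$ and fixed row counts $N_1=a$, $N_2=b$ one computes
\[
\mathbb{P}(1122)+\mathbb{P}(2211)-\mathbb{P}(1212)-\mathbb{P}(2121)\;=\;\frac{a^2+b^2-ab-a-b}{(a+b)(a+b-1)},
\]
which is nonnegative for all $a,b\ge 2$ and strictly positive unless $a=b=2$; hence no mixture over $(N_1,N_2)$ conditioned on $E_n^{\lambda_n}$ can make these four probabilities equal. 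Your rule sidesteps this because a uniformly chosen $k_n$-subset of $m\ge k_n$ i.i.d.\ uniforms is again $k_n$ i.i.d.\ uniforms, with a law not depending on $m$; the retained points are therefore genuinely i.i.d.\ within and across rows and the induced word is uniform. In effect your proof follows the paper's outline while quietly repairing this small slip.
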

\begin{proof}[Proof of Lemma \ref{lem:Couplage}]
Draw a sample of $\Pi_{nk_n,n}^{(\lambda_n)} $ and let $\tilde{\Pi}_{nk_n,n}^{(\lambda_n)} $ be the subset of  $\Pi_{nk_n,n}^{(\lambda_n)}$ obtained by keeping only the $k_n$ leftmost points in each row. If $E_n^{\lambda_n}$ occurs then the relative orders of points in $\tilde{\Pi}_{nk_n,n}^{(\lambda_n)}$  corresponds to a uniform $k_n$-multiset permutation. If $E_n^{\lambda_n}$ does not hold we bound  $\mathcal{L}_{\leq }(S_{k_n;n})$ by the worst case $nk_n$.
\end{proof}

Taking expectations in \eqref{eq:Couplage_leq} and using the upper bound \eqref{eq:esperance_upper_leq} yields
\begin{align*}
\mathbb{E}[\mathcal{L}_{\leq }(S_{k_n;n})]&\leq  2\sqrt{nk_n( 1+\delta_n)}+  k_n(1+\delta_n) + n^2k_n \exp\left(-\tfrac{1}{8}k_n^{\alpha}\right),
\end{align*}
hence the upper bound in \eqref{eq:Theta_leq}.

\medskip


\noindent{\bf Proof of the lower bound of  \eqref{eq:Theta_leq} for large $(k_n)$.}
Choose now $\lambda_n=\frac{1}{n}(1-\delta_n)$ with $\delta_n=k_n^{-(1-\alpha)/2}$. Let $F_n$ be the event
$$
F_n^{\lambda_n}=\left\{ \text{ at most $k_n$ points in each row of  }\Pi_{nk_n,n}^{(\lambda_n)}\right\}.
$$
The event $F_n^{\lambda_n}$ occurs with large probability. Indeed
$$
1-\mathbb{P}(F_n^{\lambda_n})\leq n \mathbb{P}\left( \Poi(nk_n \lambda_n) \geq k_n\right) \leq n\exp\left(-\tfrac{1}{8}k_n^{\alpha}\right),
$$
which tends to zero. 
Random sets $S_{k_n;n}$ and $\Pi_{nk_n,n}^{(\lambda_n)}$ can be defined on the same probability space in such a way that
\begin{align*}
\mathcal{L}_{\leq }(S_{k_n;n})&\geq \mathcal{L}_{\leq}(\Pi_{nk_n,n}^{(\lambda_n)})\mathbf{1}_{F_n^{\lambda_n}}.
\end{align*}
Therefore
\begin{multline}
\mathbb{P}\left(\mathcal{L}_{\leq }(S_{k_n;n}) <  (2\sqrt{nk_n(1-\delta_n)}+ k_n(1-\delta_n))(1-\eps)\right) \leq\\
 \mathbb{P}\left( \mathcal{L}_{\leq}(\Pi_{nk_n,n}^{(\lambda_n)})<   (2\sqrt{nk_n(1-\delta_n)}+ k_n(1-\delta_n))(1-\eps)\right) 
+  \mathbb{P}\left(\text{not }F_n^{\lambda_n}\right).\notag
\end{multline}
and we conclude with \eqref{eq:lower_leq}.

\subsection{The gap between small and large $(k_n)$: Conclusion of the proof of Theorem \ref{Th:Large}}

After I circulated a preliminary version of this article, Valentin F\'eray came up with a simple argument for bridging the gap between small and large $(k_n)$. This allows to prove Theorem \ref{Th:Large} for an arbitrary sequence $(k_n)$, I reproduce his argument here with his permission.

\begin{lem}\label{lem:Valentin} Let $n,k,A$ be positive integers. Two random uniform multiset permutations $\widetilde{S}_{kA ;\lfloor n/A\rfloor}$ and $S_{k;n}$ can be built on the same probability space in such a way that
$$
\mathcal{L}_{\leq}\left(S_{k ; n}\right) 
\leq
\mathcal{L}_{\leq}\left(\widetilde{S}_{kA ;\lfloor n/A\rfloor}\right)
+ kA.
$$
\end{lem}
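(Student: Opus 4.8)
The plan is to realize $S_{k;n}$ inside $\widetilde S_{kA;\lfloor n/A\rfloor}$ by collapsing blocks of $A$ consecutive heights into a single super-height. Concretely, write $m=\lfloor n/A\rfloor$ and start from a uniform $(kA)$-multiset permutation $\widetilde S$ of size $m$, \emph{i.e.} a word in which each of the letters $\{1,\dots,m\}$ appears exactly $kA$ times, so of total length $kAm$. For each letter $j\in\{1,\dots,m\}$, the $kA$ positions carrying value $j$ will be relabelled, among themselves, by a uniform random assignment of the multiset $\{\,\underbrace{1,\dots,1}_{k},\dots,\underbrace{A,\dots,A}_{k}\,\}$ to these $kA$ positions (the relabellings for distinct $j$ being independent and independent of $\widetilde S$). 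Reading off the new labels in position order, and identifying the pair (block index $j$, sub-label $a$) with the height $(j-1)A+a\in\{1,\dots,Am\}$, produces a word of length $kAm$ in which each height in $\{1,\dots,Am\}$ occurs exactly $k$ times; truncating to the heights $\{1,\dots,n\}$ (discarding the at most $kA(n-Am)<kA\cdot A$ — more simply, keeping only heights $\le n$, which removes nothing if $A\mid n$ and otherwise at most $k(n-Am)\le kA$ columns per... ) we get a word which is easily checked to be a uniform $k$-multiset permutation $S_{k;n}$. The only subtlety is the columns whose height exceeds $n$; those are at most $k\,(Am+? )$, but since $Am\le n$ there are in fact \emph{no} such columns after truncation except that $Am$ may be less than $n$, which only means $S_{k;n}$ as built has heights in $\{1,\dots,Am\}$ — still a uniform multiset permutation of the relevant form; if one insists on heights exactly $\{1,\dots,n\}$ one adjoins the missing $k(n-Am)\le kA$ columns freely, which accounts for the additive $kA$ term.

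With the coupling in place, the inequality is a short monotonicity argument. Fix a longest non-decreasing subsequence $P_1\preccurlyeq\cdots\preccurlyeq P_L$ of $S_{k;n}$, where $L=\mathcal{L}_\leq(S_{k;n})$ and $P_i=(c_i,h_i)$ with $c_1<\cdots<c_L$ and $h_1\le\cdots\le h_L$. Map each point to its super-height: $P_i\mapsto \widetilde P_i=(c_i,\,\lceil h_i/A\rceil)$. Since $h_1\le\cdots\le h_L$ we have $\lceil h_1/A\rceil\le\cdots\le\lceil h_L/A\rceil$, so the images form a non-decreasing sequence of points of $\widetilde S_{kA;m}$ with strictly increasing column indices — that is, a valid non-decreasing subsequence — \emph{except} that several consecutive $P_i$ may land on the same column after no collapsing occurs to columns, so in fact the columns stay distinct and the only failure mode is... none: columns are unchanged. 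Hence $\widetilde P_1\preccurlyeq\cdots\preccurlyeq\widetilde P_L$ is a genuine non-decreasing subsequence of $\widetilde S_{kA;m}$ (heights weakly increasing, columns strictly increasing), giving $\mathcal{L}_\leq(S_{k;n})=L\le \mathcal{L}_\leq(\widetilde S_{kA;m})$; the extra $+kA$ absorbs the columns added by the truncation adjustment above, so
$$
\mathcal{L}_{\leq}\left(S_{k ; n}\right)\le \mathcal{L}_{\leq}\left(\widetilde{S}_{kA ;\lfloor n/A\rfloor}\right)+kA.
$$

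The step I expect to be the genuinely delicate one is verifying that the collapse-and-relabel construction really produces a \emph{uniform} $k$-multiset permutation of size $n$ (up to the controlled boundary columns), rather than something merely supported on the right set of words: one must check that the distribution is exchangeable in the right sense, namely that conditioning on the block structure of $\widetilde S$ and then uniformly refining within blocks yields exactly the uniform measure on words with letter multiplicities all equal to $k$. This is a bijective/counting check — the number of ways to build a target word $w$ equals $\prod_j (\text{multinomial for splitting the }kA\text{ copies of block }j)$, which is the same for every $w$, so the induced law is uniform — but it needs to be stated carefully, together with the bookkeeping of the at most $kA$ columns lost or added when $A\nmid n$. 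Everything after that is the one-line monotonicity argument above.
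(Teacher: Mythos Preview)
Your approach is essentially the paper's: the coupling is the height-collapse $h\mapsto\lceil h/A\rceil$, and the $+kA$ accounts for the at most $k(n-A\lfloor n/A\rfloor)<kA$ points whose height lies outside $\{1,\dots,A\lfloor n/A\rfloor\}$. The only real difference is the direction of the construction. The paper starts from $S_{k;n}$, discards the columns with height $>A\lfloor n/A\rfloor$, and sets $\widetilde S(\ell)=\lceil S(i_\ell)/A\rceil$ on the surviving columns. Going this way, uniformity of $\widetilde S$ is immediate and there is no ``adjoin missing columns'' step at all.

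Your write-up would benefit from that reversal, because as written the monotonicity paragraph is not quite right. You map $P_i=(c_i,h_i)$ to $(c_i,\lceil h_i/A\rceil)$ and conclude $L\le \mathcal L_\leq(\widetilde S)$ before invoking the $+kA$; but once you have adjoined extra columns to reach size $n$, the index $c_i$ in $S_{k;n}$ is no longer a column of $\widetilde S$, and some $P_i$ may themselves be adjoined points with $\lceil h_i/A\rceil>m$, so they do not land in $\widetilde S$ at all. The correct order of operations is: first delete from the subsequence the at most $kA$ points with height $>Am$, then re-index the remaining columns by their rank among the non-adjoined positions; this gives a bona fide non-decreasing subsequence of $\widetilde S$ of length $\ge L-kA$, which is the inequality you want. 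Your closing remark on why the refinement yields the uniform law (the multinomial count is the same for every target word) is correct.
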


\begin{proof}[Proof of Lemma \ref{lem:Valentin} ]
Draw $S_{k;n}$ uniformly at random, the idea is to group all points of $S_{k;n}$ whose height is between $1$ and $A$, to group all points whose height is between $A+1$ and $2A$, and so on.

Formally, denote by $1\leq i_1<i_2<\dots<i_{kA\lfloor n/A\rfloor}$ the indices such that $1\leq i_\ell \leq \lfloor n/A\rfloor$ for every $\ell$ (see Fig.\ref{fig:Valentin}). For $1\leq \ell \leq kA \lfloor n/A\rfloor$ put
$$
\widetilde{S}(\ell)=\lceil S(i_\ell)/k \rceil.
$$

\begin{figure}
\begin{center}
\includegraphics[width=120mm]{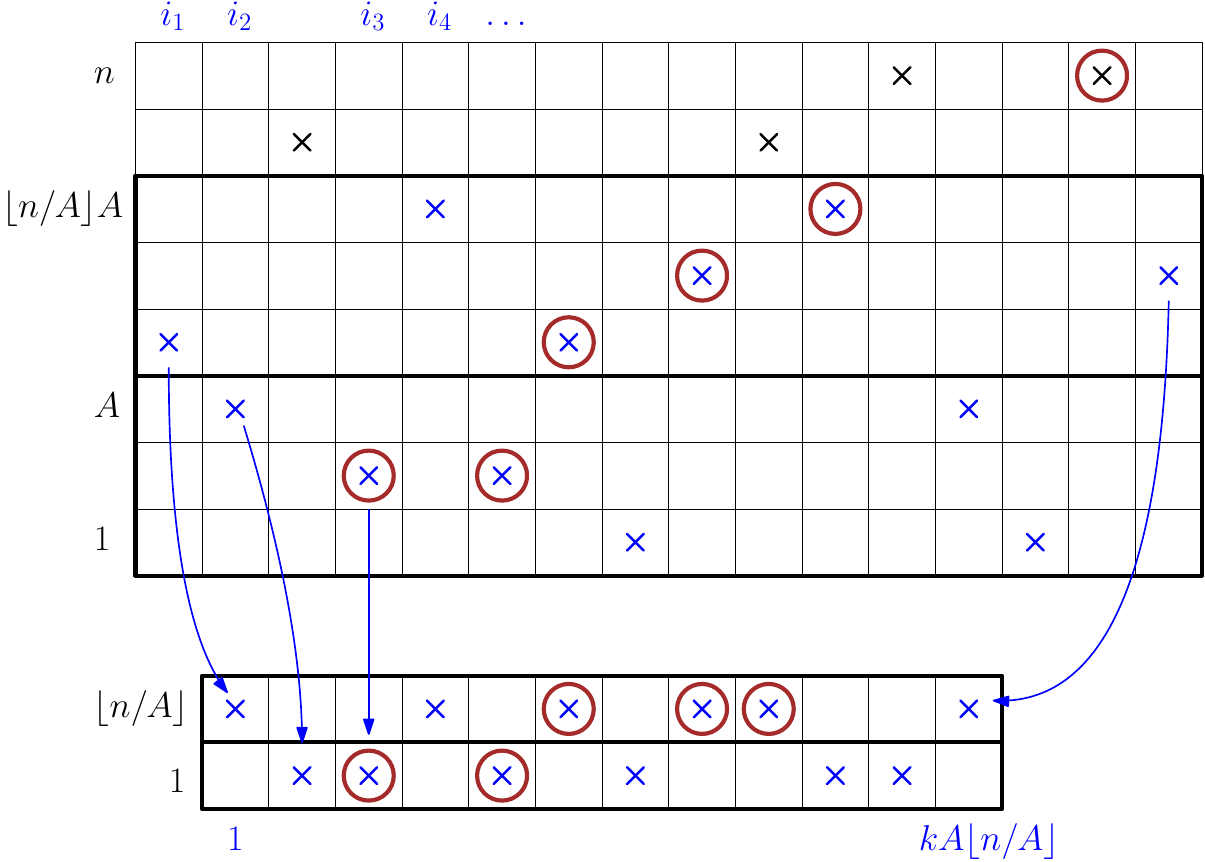} 
\end{center}
\caption{Illustration of the notation of Lemma \ref{lem:Valentin}. 
Top: the multiset permutation $S_{k;n}$. Bottom: the corresponding $\widetilde{S}$. The longest non-decreasing subsequence in $S_{k;n}$ (circled points) is mapped onto a non-decreasing subsequence in  $\widetilde{S}$, except one point with height $>A\lfloor n/A\rfloor$. }
\label{fig:Valentin}
\end{figure}

The word  $\widetilde{S}$ is a uniform $kA$-multiset permutation of size $ \lfloor n/A\rfloor$. A longest non-decreasing subsequence in $S$ is mapped onto a non-decreasing subsequence in $\widetilde{S}$, except maybe some points with height $> A\lfloor n/A\rfloor$ (there are no more than $kA$ such points). This shows the Lemma.
\end{proof}

We conclude the proof of Theorem \ref{Th:Large} by an estimation of $\mathbb{E}[\mathcal{L}_{\leq}\left(S_{k_n ; n}\right) ]$ in the case where there are infinitely many $k_n$'s  such that, say, $(\log n)^{3/4}\leq k_n \leq (\log n)^{5/4}$.
For the lower bound the job is already done by  Theorem \ref{Th:Strict} since
$$
\mathbb{E}[\mathcal{L}_{\leq}\left(S_{k_n ; n}\right) ]\geq \mathbb{E}[\mathcal{L}_{<}\left(S_{k_n ; n}\right) ]=2\sqrt{nk_n}-k_n+\mathrm{o}(\sqrt{nk_n}),
$$
which is of course also $2\sqrt{nk_n}+\mathrm{o}(nk_n)$ for this range of $(k_n)$. For the upper bound take $A=\lfloor \log n\rfloor$ in Lemma  \ref{lem:Valentin}:
\begin{equation}\label{eq:ReductionLarge}
\mathbb{E}[\mathcal{L}_{\leq}\left(S_{k_n ; n}\right) ]\leq \mathbb{E}[\mathcal{L}_{\leq}\left(S_{k_n \log n ;\lfloor n/\lfloor \log n\rfloor \rfloor}\right)]
+k_n\log n
\end{equation}
and we can apply the large case since
$$
(n/\log n)^2 k_n\log n \exp(-(k_n\log n)^{\alpha})=\mathrm{o}(k_n \log n \times \lfloor n/\lfloor \log n\rfloor \rfloor).
$$
Thus the right-hand side of \eqref{eq:ReductionLarge} is also $2\sqrt{nk_n}+\mathrm{o}(\sqrt{nk_n})$.

\section{Conclusion: Proof of Proposition \ref{prop:cvProba}}\label{Sec:Deviations}

In this short section we give the arguments needed to enhance estimates in expectation into convergences in probability. We have to prove that for every $\eps>0$
\begin{align*}
\mathbb{P}\left(L_{<}\left(S_{k_n ; n}\right) >(2\sqrt{nk_n}-k_n)(1+\eps)\right)&\to 0,\qquad 
\mathbb{P}\left(L_{<}\left(S_{k_n ; n}\right) <(2\sqrt{nk_n}-k_n)(1-\eps)\right)\to 0\\
\mathbb{P}\left(L_{\leq}\left(S_{k_n ; n}\right) >(2\sqrt{nk_n}+k_n)(1+\eps)\right)&\to 0,\qquad
\mathbb{P}\left(L_{\leq}\left(S_{k_n ; n}\right) <(2\sqrt{nk_n}+k_n)(1-\eps)\right)\to 0
\end{align*}
We only write the details for the top-left case, as the three other ones are almost identical.

The case where $(k_n)$ is small has been proved in Section \ref{Sec:Small} so it remains to prove the case where $(k_n)$ is large. We reuse the event  $E_n^{\lambda_n}$ introduced in Section \ref{Sec:DepoissonizationLarge}. 
\begin{align*}
\mathbb{P}\left(L_{<}\left(S_{k_n ; n}\right) >(2\sqrt{nk_n}-k_n)(1+\eps)\right)
\leq &\ \mathbb{P}\left(E_n^{\lambda_n} \text{ does not occur}\right)\\
+&\ \mathbb{P}\left(\mathcal{L}_{<}\left(\Pi^{(\lambda_n)}_{nk_n,n}\right)>(1+\delta_n)(2\sqrt{nk_n}-k_n)\frac{1+\eps}{1+\delta_n}\right)\\
\leq &\ n\exp\left(-\tfrac{1}{8}k_n^{\alpha}\right) \qquad \text{(recall \eqref{eq:E})}\\
+&\ \mathbb{P}\left(\mathcal{L}_{<}\left(\Pi^{(\lambda_n)}_{nk_n,n}\right)>(2\sqrt{nk_n(1+\delta_n)}-k_n(1+\delta_n))\frac{1+\eps}{1+\delta_n}\right)\\\leq &\ n\exp\left(-\tfrac{1}{8}k_n^{\alpha}\right) +  \exp(-\tilde{g}(\varepsilon/2)(\sqrt{nk_n}-k_n)),
\end{align*}
for large enough $n$ and for some positive $\tilde{g}$, using \eqref{eq:upper}. This tends to zero as desired.

The lower bound for $L_{<}(S_{k_n ; n})$ is proved in the same way. For the convergence of $L_{\leq}(S_{k_n ; n})$ we reuse the event $F_n^{\lambda_n}$ with $\lambda_n=\frac{1}{n}(1+\log(n))$.


\appendix
\section{Useful tail inequalities}

We collect here for convenience some (non-optimal) tail inequalities. 

\begin{lem}[(See Chap.2 in \cite{JansonChernov})]\label{lem:Chernov}
Let $\Poi(\lambda)$ be a Poisson random variable with mean $\lambda$. 
\begin{itemize}
\item[(i)]\label{item:Poisson_upper} For every $0<A\leq 3\lambda$,
$$
\mathbb{P}\left( \Poi(\lambda) \leq \lambda-A\right) \leq \exp(- A^2/4\lambda).
$$
\item[(ii)]\label{item:Poisson_lower} For every $A>0$,
$$
\mathbb{P}\left( \Poi(\lambda) \geq \lambda+A\right) \leq \exp(- A^2/4\lambda).
$$
\end{itemize}
\end{lem}

\begin{lem}[Th.2.1 in \cite{JansonChernov}]\label{lem:ChernovB} Let $\Bin(n,p)$ be a Binomial random variable with parameters $(n,p)$.
For $0<\varepsilon<1$,
\begin{align*}
\mathbb{P}( \Bin(n,p)\leq np-\varepsilon np) &\leq \exp\left(-\eps^2 np/2\right),\\
\mathbb{P}( \Bin(n,p) \geq np + \varepsilon np) &\leq  \exp\left(-\eps^2 np/3\right).
\end{align*}
\end{lem}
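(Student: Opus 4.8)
The plan is to prove the bounds in the form in which they are actually used in the paper, namely the upper-tail estimate $\P(\Bin(n,p)\ge np(1+\eps))\le\exp(-\eps^2 np/3)$ and the lower-tail estimate $\P(\Bin(n,p)\le np(1-\eps))\le\exp(-\eps^2 np/2)$, by the classical Chernoff (exponential-moment) method; this is exactly the computation behind \cite[Th.~2.1]{JansonChernov}, reproduced here for self-containedness. Write $\Bin(n,p)=\sum_{i=1}^{n}X_i$ with $X_i$ i.i.d.\ $\mathrm{Bernoulli}(p)$. The single analytic input is the elementary bound $\E[e^{\theta X_i}]=1-p+pe^{\theta}\le\exp\!\big(p(e^{\theta}-1)\big)$, valid for all $\theta\in\bbR$ (it is just $1+u\le e^u$ with $u=p(e^\theta-1)$), which by independence gives $\E[e^{\theta\,\Bin(n,p)}]\le\exp\!\big(np(e^{\theta}-1)\big)$.

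For the upper tail, fix $\theta>0$ and apply Markov's inequality to $e^{\theta\,\Bin(n,p)}$:
\[
\P\big(\Bin(n,p)\ge np(1+\eps)\big)\le e^{-\theta np(1+\eps)}\,\E\!\left[e^{\theta\,\Bin(n,p)}\right]\le\exp\!\Big(np\big(e^{\theta}-1-\theta(1+\eps)\big)\Big).
\]
The choice $\theta=\log(1+\eps)>0$ minimises the exponent and yields $\P(\Bin(n,p)\ge np(1+\eps))\le\exp(-np\,\varphi(\eps))$ with $\varphi(\eps)=(1+\eps)\log(1+\eps)-\eps$. It then remains to establish the scalar inequality $\varphi(\eps)\ge\eps^2/3$ for $0<\eps<1$; this is where the constant $1/3$, rather than the Taylor-order constant $1/2$, enters, and the restriction $\eps\le1$ is genuinely used. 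A convenient route is the Bennett-type bound $\varphi(\eps)\ge\eps^2/(2+\tfrac{2}{3}\eps)$ — itself a routine one-variable check, since both sides vanish to second order at $0$ — which for $\eps\le1$ is at least $3\eps^2/8\ge\eps^2/3$.

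For the lower tail one repeats the computation with $\theta<0$:
\[
\P\big(\Bin(n,p)\le np(1-\eps)\big)\le e^{-\theta np(1-\eps)}\,\E\!\left[e^{\theta\,\Bin(n,p)}\right]\le\exp\!\Big(np\big(e^{\theta}-1-\theta(1-\eps)\big)\Big),
\]
and $\theta=\log(1-\eps)<0$ gives $\P(\Bin(n,p)\le np(1-\eps))\le\exp(-np\,\psi(\eps))$ with $\psi(\eps)=(1-\eps)\log(1-\eps)+\eps$. Here the concluding scalar bound $\psi(\eps)\ge\eps^2/2$ on $[0,1)$ is immediate, since $\psi'(\eps)=-\log(1-\eps)\ge\eps$ and hence $\psi(\eps)=\int_0^{\eps}\psi'(s)\,ds\ge\eps^2/2$.

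The argument is entirely routine: the only non-mechanical ingredients are the two scalar estimates $\varphi(\eps)\ge\eps^2/3$ and $\psi(\eps)\ge\eps^2/2$, and the first is the marginally more delicate of the two, precisely because one must exploit $\eps\le1$ to pass from the natural constant $3/8$ down to the stated $1/3$. If a cruder constant were acceptable, one could instead simply invoke $\E[e^{\theta\,\Bin(n,p)}]\le\E[e^{\theta\,\Poi(np)}]$ and run the Chernoff bound of Lemma \ref{lem:Chernov} verbatim, obtaining both tails with $1/4$ in place of $1/2$ and $1/3$; in particular the coarse bound $\P(\Bin(n,p)\ge np+A)\le\exp(-A^2/4np)$ (and its lower-tail analogue) follows for free and would already suffice for the applications in Section~\ref{Sec:Hammersley}.
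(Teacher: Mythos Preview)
Your proof is correct. The paper itself does not prove this lemma at all: it simply quotes it as \cite[Th.~2.1]{JansonChernov} in the appendix of ``useful tail inequalities'' and moves on. So there is no approach to compare against; you have supplied the standard Chernoff argument that the paper omits. You also correctly identified that the inequalities in the displayed statement are typeset with the wrong direction (the first line should read $\Bin(n,p)\le np-\eps np$ and the second $\Bin(n,p)\ge np+\eps np$), and you proved them in the form actually invoked in the proof of Lemma~\ref{lem:lower_upper_star}.
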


\begin{lem}\label{lem:ChernovG} Fix $\alpha\in(0,1)$ and let $\mathcal{G}_1^{(\alpha)}, \dots ,\mathcal{G}_k^{(\alpha)}$ be i.i.d. random variables with distribution
$\mathrm{Geometric}_{\geq 0}(1-\alpha)$. Then $\mathbb{E}[\mathcal{G}_1^{(\alpha)}]=\frac{\alpha}{1-\alpha}$ and for every $0<\varepsilon<1$,
\begin{align*}
\mathbb{P}\left( \mathcal{G}_1^{(\alpha)}+\dots + \mathcal{G}_k^{(\alpha)}\geq (1+\eps)k\frac{\alpha}{1-\alpha}\right) &\leq 
\exp\left(-\eps^2 k \alpha/20 \right),\\
\mathbb{P}\left( \mathcal{G}_1^{(\alpha)}+\dots + \mathcal{G}_k^{(\alpha)}\leq (1-\eps)k\frac{\alpha}{1-\alpha}\right) &\leq 
\exp\left(-\eps^2 k \alpha/20 \right).
\end{align*}
\end{lem}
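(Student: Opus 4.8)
The plan is to derive both estimates from a single tool, a Chernoff (exponential Markov) bound, using that the moment generating function of $\mathrm{Geometric}_{\geq 0}(1-\alpha)$ is explicit. Put $\mu:=\frac{\alpha}{1-\alpha}$ and $S:=\mathcal{G}_1^{(\alpha)}+\dots+\mathcal{G}_k^{(\alpha)}$, so $\mathbf{E}[S]=k\mu$; summing a geometric series, for every real $\theta$ with $\alpha e^{\theta}<1$,
\[
\varphi(\theta):=\mathbf{E}\!\left[e^{\theta\mathcal{G}_1^{(\alpha)}}\right]=\frac{1-\alpha}{1-\alpha e^{\theta}}=\frac{1}{1-\mu(e^{\theta}-1)},\qquad\mathbf{E}\!\left[e^{\theta S}\right]=\varphi(\theta)^{k},
\]
and $\varphi$ is finite on $(-\infty,0]$ and on $(0,\log(1/\alpha))$.

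For the upper tail I would apply Markov's inequality to $e^{\theta S}$, $0<\theta<\log(1/\alpha)$, giving
\[
\mathbb{P}\!\left(S\geq(1+\eps)k\mu\right)\leq\exp\!\Big(-k\big[\theta(1+\eps)\mu+\log\big(1-\mu(e^{\theta}-1)\big)\big]\Big),
\]
and then maximise the bracket in $\theta$; the optimiser is the admissible value $e^{\theta}=\frac{(1+\eps)(1+\mu)}{1+(1+\eps)\mu}$, and after simplification the exponent becomes $k\Phi(\eps,\mu)$ with $\Phi(\eps,\mu)=(1+\eps)\mu\log(1+\eps)-\big(1+(1+\eps)\mu\big)\log\big(1+\tfrac{\eps\mu}{1+\mu}\big)$. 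The lower tail is symmetric: Markov's inequality for $e^{-\vartheta S}$, $\vartheta>0$, gives
\[
\mathbb{P}\!\left(S\leq(1-\eps)k\mu\right)\leq\exp\!\Big(-k\big[{-\vartheta(1-\eps)\mu}+\log\big(1+\mu(1-e^{-\vartheta})\big)\big]\Big),
\]
with optimiser $e^{-\vartheta}=\frac{(1-\eps)(1+\mu)}{1+(1-\eps)\mu}$ and resulting exponent $k\big[(1-\eps)\mu\log(1-\eps)-\big(1+(1-\eps)\mu\big)\log\big(1-\tfrac{\eps\mu}{1+\mu}\big)\big]$.

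The remaining (and only real) work is to bound each of these exponents below by $\tfrac14\eps^{2}k\mu$ for $0<\eps<1$. I would do this for $\Phi$ by observing that $\Phi(0,\mu)=0$, $\partial_{\eps}\Phi(0,\mu)=0$, $\partial_{\eps}^{2}\Phi(0,\mu)=\frac{\mu}{1+\mu}$, and $\Phi(\cdot,\mu)$ is convex (being a supremum of affine functions of $\eps$), and then controlling the remainder by elementary inequalities for $\log(1+x)$; the lower-tail function is treated the same way. This is the step I expect to be the main obstacle: the optimisation itself is routine, but extracting the clean constant $\tfrac14$ from the transcendental expression, uniformly in $\eps$, requires some care — and the best comparison constant degrades as $\alpha\to1$, so the argument uses that $\alpha$ stays bounded away from $1$, which is the case wherever the lemma is invoked (there $\alpha=\bar{\beta}^{\star}\leq\tfrac12$ since one always assumes $t\geq x\lambda$). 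In any event, for the applications any fixed positive constant in place of $\tfrac14$ would suffice.
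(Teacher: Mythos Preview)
Your approach and the paper's are both Chernoff bounds; the difference is in execution. The paper does not optimise the exponent: instead it bounds the centered moment generating function directly, claiming (with a sign typo) that $\mathbb{E}\big[e^{\lambda(\mathcal{G}_1^{(\alpha)}-\mu)}\big]\leq\exp(\lambda^{2}\mu)$ for $|\lambda|<1$ via the elementary inequality $e^{\lambda}\leq1+\lambda+\lambda^{2}$, and then invokes an off-the-shelf subexponential tail bound. This is shorter than your route of optimising first and then lower-bounding the resulting $\Phi$, but leads to the same place.

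Your caveat about $\alpha\to1$ is entirely correct, and it applies to the paper's argument as well. Since $\mathrm{Var}(\mathcal{G}_1^{(\alpha)})=\mu(1+\mu)$, expanding both sides of the paper's MGF inequality to second order in $\lambda$ shows it already fails once $\mu>1$, i.e.\ $\alpha>\tfrac12$; and your own optimal exponent satisfies $\Phi(\eps,\mu)\to\eps-\log(1+\eps)$ as $\mu\to\infty$, so it cannot dominate $\tfrac14\eps^{2}\mu$ uniformly. Thus the lemma as stated does not hold for all $\alpha\in(0,1)$. Your fix --- that in the only place the lemma is invoked one has $\alpha=\bar{\beta}^{\star}=\big(1+\sqrt{t/x\lambda}\big)^{-1}\leq\tfrac12$ under the standing hypothesis $t\geq x\lambda$ --- is exactly right and rescues both arguments.
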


\begin{proof}[Proof of Lemma \ref{lem:ChernovG}]
We will use the two inequalities
$$
\exp(z)\leq 1+z+z^2 \text{ for }|z|<1,\qquad \frac{1}{1-u}\leq \exp(u+u^2) \text{ for }|u|<1/2.
$$
Fix $\lambda$ such that $|\lambda| <\min\set{1,(1-\alpha)/4\alpha}$ so that $\frac{\alpha}{1-\alpha} |\lambda+\lambda^2|<1/2$:
\begin{align*}
\mathbb{E}[e^{\lambda(\mathcal{G}_1^{(\alpha)}-\frac{\alpha}{1-\alpha})} ]
&=\frac{(1-\alpha) }{1-\alpha e^{\lambda}}e^{-\lambda \frac{\alpha}{1-\alpha}}=\frac{1 }{1-\frac{\alpha}{1-\alpha} (e^{\lambda}-1)}e^{-\lambda \frac{\alpha}{1-\alpha}}\\
&\leq \frac{1 }{1-\frac{\alpha}{1-\alpha} (\lambda+\lambda^2)}e^{-\lambda \frac{\alpha}{1-\alpha}}\\
&\leq \exp\left(\frac{\alpha}{1-\alpha} (\lambda+\lambda^2)+\left(\frac{\alpha}{1-\alpha}\right)^2 (\lambda+\lambda^2)^2 -\lambda \frac{\alpha}{1-\alpha}\right)\\
&\leq \exp\left(\frac{\alpha}{(1-\alpha)^2}\lambda^2  \left(2+\lambda^2+2\lambda\right)\right)
\leq \exp\left( 5\lambda^2 \frac{\alpha}{(1-\alpha)^2}\right).
\end{align*}
Thus, for every $|\lambda|<\frac{1}{\beta}:=\min\set{1,(1-\alpha)/4\alpha}$ it holds that
$
\mathbb{E}[e^{\lambda(\sum_{i=1}^k\mathcal{G}_i^{(\alpha)}-k\frac{\alpha}{1-\alpha})} ]\leq \exp\left(\frac{\nu^2\lambda^2}{2}\right)
$
where $\nu^2:=10k\alpha/(1-\alpha)^2$.

This says that for every $k\geq 1$ the random variable $\mathcal{G}_1^{(\alpha)}+\dots + \mathcal{G}_k^{(\alpha)}$ is subexponential and the Chernov method applies (use \emph{e.g.} \cite[Prop.2.9]{Wainwright} with $t=\eps k\alpha/(1-\alpha)$):
\begin{align*}
\mathbb{P}\left( \mathcal{G}_1^{(\alpha)}+\dots + \mathcal{G}_k^{(\alpha)}\geq (1+\eps)k\frac{\alpha}{1-\alpha}\right) 
&\leq \exp\left(-\frac{t^2}{2\nu^2} \right)=\exp\left(-\eps^2 k^2 \frac{\alpha^2}{(1-\alpha)^2}\times \frac{(1-\alpha)^2}{2\times 10 k\alpha} \right)\\
&=\exp\left(-\eps^2 k \alpha/20 \right),
\end{align*}
as long as 
$$
\eps k\frac{\alpha}{1-\alpha}\leq \frac{\nu^2}{\beta}= 10k\frac{\alpha}{(1-\alpha)^2}\min\set{1,(1-\alpha)/4\alpha}
$$
which is always the case if $\eps <1$. The similar inequality holds for the left-tail bound (see \cite[Prop.2.9]{Wainwright} again).
\end{proof}


\section{An invariance property for the M/M/1 queue }\label{Sec:Appendix}

To conclude we state and prove the very simple property of the recurrent M/M/1 queue which allows to prove stationarity in Lemma \ref{lem:Stationnaire_leq}. It is very close to Burke's property of the discrete HAD process \cite{FerrariMartin}.

Let $\beta>\lambda>0$ be fixed parameters. Consider two independent homogeneous Poisson Point Process (PPP) $\Pi_\nearrow,\Pi_\searrow$ over $(0,+\infty)$ with respective intensities $\lambda,\beta$. Let $(H_y)_{y\geq 0}$ be the queue whose '+1' steps (customer arrivals) are given by $\Pi_\nearrow$ and '-1' steps (service times) are given by $\Pi_\searrow$ and whose initial distribution $H_0$ is drawn (independently from $\Pi_\nearrow,\Pi_\searrow$ ) according to a $\mathrm{Geometric}_{\geq 0}(1-\beta^\star)$ with $\beta^\star=\lambda/\beta$.

\begin{figure}
\begin{center}
\includegraphics[width=120mm]{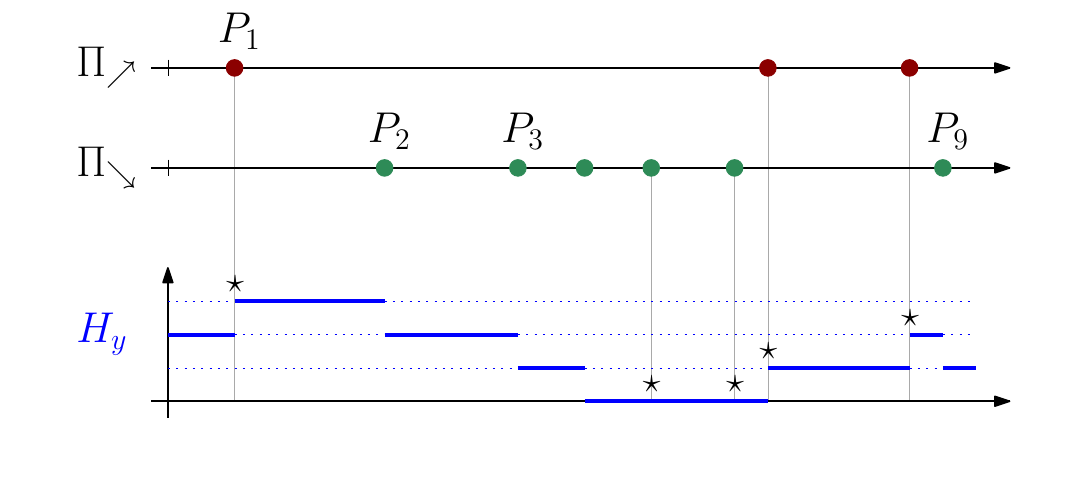} 
\end{center}
\caption{Notation of Lemma \ref{lem:Burke}. Points of $\overline{\Pi}$ are depicted with $\star$'s. }
\label{fig:H}
\end{figure}

Let $\Pi_0$ be the point process given by \emph{unused} service times:
$$
\Pi_0=\set{y\in \Pi_\searrow\text{ such that } H_y=0}.
$$
\begin{lem}\label{lem:Burke}
The process $\overline{\Pi}:=\Pi_\nearrow\cup \Pi_0$ is a homogeneous PPP with intensity $\beta$.
\end{lem}

\begin{proof} (The reader is invited to look at Fig.\ref{fig:H} for notation.)\\
The point process $\Pi_\nearrow\cup \Pi_\searrow$ is a homogeneous PPP with intensity $\lambda+\beta$, independent from $H_0$. We claim that $\overline{\Pi}$ is a subset of $\Pi_\nearrow\cup \Pi_\searrow$ where each point in  $\Pi_\nearrow\cup \Pi_\searrow$ is taken independently with probability $\beta/(\lambda+\beta)$, it is therefore a homogeneous  PPP with intensity $\beta$. 

We need a few notation in order to prove the claim. Set $P_0=0$ and for $i\geq 1$ let $P_i$ be the $i$-th point of $\Pi_\nearrow\cup \Pi_\searrow$ and let $(\tilde{H}_i)_{i\geq 0}$ be the discrete-time embedded chain associated to $H$, \emph{i.e.} $\tilde{H}_i=H_{P_i}$ for every $i$.

We will prove by induction that for every $i\geq 1$:
\begin{itemize}
\item the points $P_i$ belongs to $\overline{\Pi}$ with probability $\beta/(\lambda+\beta)$ independently from the events $\{P_1\in \overline{\Pi}\},\dots,\{P_{i-1}\in\overline{\Pi}\}$;
\item  $\tilde{H}_i$ is independent from $\{P_1\in \overline{\Pi}\},\dots,\{P_{i}\in\overline{\Pi}\}$ and is a $\mathrm{Geometric}_{\geq 0}(1-\beta^\star)$. 
\end{itemize}
This implies the claim and proves the Lemma. For the base case:
\begin{align*}
\mathbb{P}(P_1\in \overline{\Pi},\tilde{H}_1=k)&= \mathbb{P}(P_1\in \Pi_\nearrow, \tilde{H}_0=k-1 )\mathbf{1}_{k\geq 1}+\mathbb{P}(P_1\in \Pi_\searrow,\tilde{H}_0=0)\mathbf{1}_{k=0},\\
&=\frac{\lambda}{\lambda+\beta}\times (1-\beta^\star)(\beta^\star)^{k-1}\mathbf{1}_{k\geq 1}+ \frac{\beta}{\lambda+\beta}\times (1-\beta^\star)
\mathbf{1}_{k=0},\\
&=\frac{\beta}{\lambda+\beta}\times (1-\beta^\star)(\beta^\star)^{k}\qquad \text{(recall $\beta\beta^\star=\lambda$).}
\end{align*}
More generally let $E_j$ be one of the two events $P_j\in \overline{\Pi}/P_j\notin \overline{\Pi}$.
\begin{align*}
\mathbb{P}(P_i\in \overline{\Pi},\tilde{H}_i=k\ |\ E_1,\dots,E_{i-1})= &\ \mathbb{P}(P_i\in \Pi_\nearrow, \tilde{H}_{i-1}=k-1 \ |\ E_1,\dots,E_{i-1} )\mathbf{1}_{k\geq 1}\\
+&\ \mathbb{P}(P_i\in \Pi_\searrow,\tilde{H}_{i-1}=0\ |\ E_1,\dots,E_{i-1})\mathbf{1}_{k=0},\\
=&\ \mathbb{P}(P_i\in \Pi_\nearrow, \tilde{H}_{i-1}=k-1)\mathbf{1}_{k\geq 1}\\
+&\ \mathbb{P}(P_i\in \Pi_\searrow,\tilde{H}_{i-1}=0)\mathbf{1}_{k=0},\qquad \text{(by induction hypothesis).}\\
=&\ \frac{\beta}{\lambda+\beta}\times (1-\beta^\star)(\beta^\star)^{k}.
\end{align*}

\end{proof}

\paragraph{Acknowledgements.} 
This work started as a collaboration with Anne-Laure Basdevant, I would like to thank her very warmly. 
I am also extremely indebted to Valentin F\'eray for Lemma \ref{lem:Valentin} and for having enlightened me on the links with \cite{Biane}.
Finally, thanks to the authors of \cite{ContinuouslyIncreasing} for their stimulating paper and to anonymous referees for their careful readings.

\bibliographystyle{alpha}
\bibliography{BiblioMultiset}

\newcommand{\etalchar}[1]{$^{#1}$}
\begin{thebibliography}{CDH{\etalchar{+}}22}

\bibitem[AD95]{AldousDiaconis}
David Aldous and Persi Diaconis.
\newblock Hammersley's interacting particle process and longest increasing
  subsequences.
\newblock {\em Probability Theory and Related Fields}, 103(2):199--213, 1995.

\bibitem[BDJ99]{BDK}
Jinho Baik, Percy Deift, and Kurt Johansson.
\newblock On the distribution of the length of the longest increasing
  subsequence of random permutations.
\newblock {\em J. Amer. Math. Soc.}, 12(4):1119--1178, 1999.

\bibitem[BEGG16]{NousAlea}
Anne-Laure Basdevant, Nathana\"{e}l Enriquez, Lucas Gerin, and Jean-Baptiste
  Gou\'{e}r\'{e}.
\newblock Discrete {H}ammersley's lines with sources and sinks.
\newblock {\em ALEA Lat. Am. J. Probab. Math. Stat.}, 13:33--52, 2016.

\bibitem[Bia01]{Biane}
Philippe Biane.
\newblock Approximate factorization and concentration for characters of
  symmetric groups.
\newblock {\em Internat. Math. Res. Notices}, (4):179--192, 2001.

\bibitem[Boy22]{TheseBoyer}
Alexandre Boyer.
\newblock Chapter 3 (in {E}nglish) of \emph{Stationnarité bidimensionnelle de
  modèles aléatoires du plan}, 2022.
\newblock PhD Thesis, available at
  \url{https://tel.archives-ouvertes.fr/tel-03783603/}.

\bibitem[CDH{\etalchar{+}}22]{ContinuouslyIncreasing}
Alexander Clifton, Bishal Deb, Yifeng Huang, Sam Spiro, and Semin Yoo.
\newblock Continuously increasing subsequences of random multiset permutations.
\newblock {\em S\'{e}m. Lothar. Combin.}, 86B:Art. 4, 11, 2022.
\newblock (Proceedings of FPSAC'22.).

\bibitem[CG05]{CatorGroeneboom}
Eric Cator and Piet Groeneboom.
\newblock Hammersley's process with sources and sinks.
\newblock {\em Annals of Probability}, 33(3):879--903, 2005.

\bibitem[CG06]{CatorGroeneboom2}
Eric Cator and Piet Groeneboom.
\newblock Second class particles and cube root asymptotics for {H}ammersley’s
  process.
\newblock {\em Annals of Probability}, 34(4):1273--1295, 2006.

\bibitem[CG19]{CiechGeorgiou}
Federico Ciech and Nicos Georgiou.
\newblock Order of the variance in the discrete {H}ammersley process with
  boundaries.
\newblock {\em Journal of Statistical Physics}, 176(3):591--638, 2019.

\bibitem[CLRS09]{Cormen}
Thomas~H. Cormen, Charles~E. Leiserson, Ronald~L. Rivest, and Clifford Stein.
\newblock {\em Introduction to algorithms}.
\newblock MIT Press, Cambridge, MA, third edition, 2009.

\bibitem[Fer96]{Ferrari}
Pablo~A. Ferrari.
\newblock Limit theorems for tagged particles.
\newblock {\em Markov Process. Related Fields}, 2(1):17--40, 1996.

\bibitem[FM06]{FerrariMartin}
Pablo~A. Ferrari and J.~B. Martin.
\newblock Multi-class processes, dual points and {$M/M/1$} queues.
\newblock {\em Markov Process. Related Fields}, 12(2):175--201, 2006.

\bibitem[Ham72]{Hamm}
John~M. Hammersley.
\newblock A few seedlings of research.
\newblock In {\em Proceedings of the 6th Berkeley Symp. Math. Statist. and
  Probability}, volume~1, pages 345--394, 1972.

\bibitem[J{\L}R00]{JansonChernov}
Svante Janson, Tomasz {\L}uczak, and Andrzej Rucinski.
\newblock {\em Random graphs}.
\newblock Wiley-Interscience Series in Discrete Mathematics and Optimization.
  Wiley-Interscience, New York, 2000.

\bibitem[Rom15]{Romik}
Dan Romik.
\newblock {\em The surprising mathematics of longest increasing subsequences},
  volume~4 of {\em Institute of Mathematical Statistics Textbooks}.
\newblock Cambridge University Press, New York, 2015.

\bibitem[Sep97]{Sepp}
Timo Sepp{\"a}l{\"a}inen.
\newblock Increasing sequences of independent points on the planar lattice.
\newblock {\em Annals of Applied Probability}, 7(4):886--898, 1997.

\bibitem[Sep98]{Sepp2}
Timo Sepp{\"a}l{\"a}inen.
\newblock Exact limiting shape for a simplified model of first-passage
  percolation on the plane.
\newblock {\em Annals of Probability}, 26(3):1232--1250, 1998.

\bibitem[Ste97]{Steele}
John~Michael Steele.
\newblock {\em Probability theory and combinatorial optimization}.
\newblock Society for Industrial and Applied Mathematics (SIAM), 1997.

\bibitem[VK77]{VersikKerov}
Anatoly~M. Ver{\v{s}}ik and Sergei~V. Kerov.
\newblock Asymptotics of {P}lancherel measure of symmetrical group and limit
  form of {Y}oung tables.
\newblock {\em Doklady Akademii Nauk SSSR}, 233.6:1024--1027, 1977.

\bibitem[Wai19]{Wainwright}
Martin~J. Wainwright.
\newblock {\em High-dimensional statistics}, volume~48 of {\em Cambridge Series
  in Statistical and Probabilistic Mathematics}.
\newblock Cambridge University Press, Cambridge, 2019.

\end{thebibliography}

\vfill
\noindent \textsc{Lucas Gerin} \verb|gerin@cmap.polytechnique.fr|\\
\textsc{Cmap, Cnrs}, \'Ecole Polytechnique,\\
Institut Polytechnique de Paris,\\
Route de Saclay,\\
91120 Palaiseau Cedex (France).

\end{document}